	\definecolor{linkred}{rgb}{0.7,0.2,0.2}
	\definecolor{linkblue}{rgb}{0,0.2,0.6}
	\definecolor{linkgreen}{rgb}{0,0.6,0.2}
\crefname{equation}{}{}
\newtheorem{theorem}{Theorem}[section]
\newtheorem{definition}[theorem]{Definition}
\newtheorem{lemma}[theorem]{Lemma}
\newtheorem{corol}[theorem]{Corollary}
\newtheorem{prop}[theorem]{Proposition}
\newtheorem{example}[theorem]{Example}
\newtheorem{remark}[theorem]{Remark}
\newtheorem*{Th}{Theorem}
\def\calK{{\mathcal{K}}}
\def\calO{{\mathcal{O}}}
\def\calR{{\mathcal{R}}}
\def\bfN{{\mathbf{N}}}
\def\Gr{\operatorname{Gr}}
\def\bGr{\overline{\operatorname{Gr}}}
\def\pt{\mathsf{pt}}
\def\gr{\operatorname{gr}}
\def\Eu{\operatorname{Eu}}
\def\Diff{\operatorname{\mathsf{Diff}}}
\def\Hom{\operatorname{Hom}}
\def\Ber{\operatorname{\sf Ber}}
\tikzset{anchorbase/.style={>=To,baseline={([yshift=-0.5ex]current bounding box.center)}}}
\begin{document}

\title[Shifted twisted Yangians and Coulomb branch]{Quivers with Involutions and Shifted Twisted Yangians via Coulomb Branches}

\author{Yaolong Shen}
\address[Yaolong Shen]{Department of Mathematics, University of Ottawa, Ottawa, ON, K1N 6N5, Canada}
\email{yshen5@uottawa.ca}

\author{Changjian Su}
\address[Changjian Su]{Yau Mathematical Sciences Center, Tsinghua University, Beijing, China}
\email{changjiansu@mail.tsinghua.edu.cn}

\author{Rui Xiong}
\address[Rui Xiong]{Department of Mathematics and Statistics, University of Ottawa, 150 Louis-Pasteur, Ottawa, ON, K1N 6N5, Canada}
\email{rxion043@uottawa.ca}

\begin{abstract}
    To a quiver with involution, we study the Coulomb branch of the 3d $\mathcal{N} = 4$ involution-fixed part of the quiver gauge theory. We show that there is an algebra homomorphism from the corresponding shifted twisted Yangian to the quantized Coulomb branch algebra. This gives a new instance of 3D mirror symmetries.
\end{abstract}

\maketitle

\setcounter{tocdepth}{1}

\section{Introduction}

It was proved by Ariki \cite{Ari} that
the category of finite dimensional modules of affine Hecke algebras in type $A$ can be used to categorify the negative part $\mathbf{U}^-$ of the Drinfeld--Jimbo quantum group $\mathbf{U}$ of type $A$, which is a generalization of the Lascoux--Leclerc--Thibon conjecture \cite{LLT}. For the affine Hecke algebras of type $B$, there is a similar categorification, conjectured by Enomoto and Kashiwara \cite{EK07,EK08,EK082}, and proved later by Enomoto \cite{En09} for some special cases and by Varagnolo and Vasserot in full generality \cite{VV11}. In this setting, the negative part $\mathbf{U}^-$ is replaced by some representation of a new quantum algebra ${}^\tau\mathbf{B}$, which is roughly speaking a Cartan-folded variation of $\mathbf{U}$, and is determined by an involution $\tau$ on the Dynkin diagram. 
$$
\begin{tikzpicture}[scale=1.2,
                    arrow/.style={red, <->, thick}]
\foreach \i in {0,...,5}
    \node[circle, draw, inner sep=3pt] (a\i) at (\i,0) {};
    \foreach \i in {0,...,5}
    \node (b\i) at (\i,.05) {};
\draw[arrow] (b0) to[bend left=50] (b5);
\draw[arrow] (b1) to[bend left=40] (b4);
\draw[arrow] (b2) to[bend left=30] (b3);
\node[red] at (2.5,1.5){$\tau$};
\draw[-,ultra thick] (a0) -- (a1); 
\draw[-,ultra thick] (a1) -- (a2); 
\draw[-,ultra thick] (a2) -- (a3); 
\draw[-,ultra thick] (a3) -- (a4); 
\draw[-,ultra thick] (a4) -- (a5); 
\node at (2.5,-.5){$\text{An example of a Dynkin diagram with an involution } \tau$};
\end{tikzpicture}
$$

\newcommand{\g}{\mathfrak{g}}

On the other hand, a \emph{Satake diagram} is a bi-colored Dynkin diagram 
\( I = I_\circ \cup I_\bullet \) equipped with a diagram involution \(\tau\), and it is called \emph{quasi-split} if \( I = I_\circ \). The Satake diagrams can be used to classify the real simple Lie algebras and the
symmetric pairs \((\mathfrak{g}, \mathfrak{g}^\theta)\), where \(\mathfrak{g}\) is a complex simple Lie algebra and \(\theta\) is an involution of \(\mathfrak{g}\), see \cite{Ara62}. Quantizing \((\mathfrak{g}, \mathfrak{g}^\theta)\), we get the quantum symmetric pair \((\mathbf{U},\mathbf{U}^\imath)\), where \(\mathbf{U}^\imath\) is a coideal subalgebra of \(\mathbf{U}\), known as the \(\imath\)quantum group \(\mathbf{U}^\imath\), see \cite{Let99, Kol14, Wan23}. 
Over the past decade, many fundamental constructions for quantum groups---including Jimbo--Schur duality, canonical bases, (quasi) \(R\)-matrices, and Hall algebra realizations---have been extended to the framework of \(\imath\)quantum groups; cf. \cite{BW18p, BW18, BK19, LW23,SW23,WZ22,SW24}.

The following is a comparison of the presentations of two algebras ${}^\tau\mathbf{B}$ and $\mathbf{U}^\imath$: 
$$
\begin{matrix}
\text{Enomoto--Kashiwara's algebra}\\
\begin{array}{|c|}\hline\vphantom{\dfrac{1}{2}}
{}^\tau\mathbf{B}=\left<T_i^{\pm 1},E_i,F_i\right>_{i\in I}
\\[1ex]
\begin{aligned}
& T_iT_j = T_jT_i,\qquad T_{\tau(i)}=T_i\\
& T_iE_jT_i^{-1} = q^{c_{ij}+c_{\tau i,j}} E_j\\
& T_iF_jT_i^{-1} = q^{-c_{ij}-c_{\tau i,j}} F_j\\
& E_iF_j - q^{-c_{ij}}F_jE_i=
(\delta_{ij}+\delta_{\tau i,j}T_i)\\
& \text{Serre relations for $E_i$'s and $F_i$'s}
\end{aligned}
\\\hline
\end{array}\\\vphantom{\dfrac{1}{2}}
\text{Higgs branch side}
\end{matrix}
\quad
\begin{matrix}
\stackrel{\mbox{\large\tt?}}\longleftrightarrow\\
\vphantom{\dfrac{1}{2}}
\end{matrix}
\quad
\begin{matrix}
\text{$\imath$Quantum groups}\\
\begin{array}{|c|}\hline\vphantom{\dfrac{1}{2}}
\mathbf{U}^\imath=\left<k_i^{\pm 1},B_i\right>_{i\in I}
\\[1ex]
\begin{aligned}
& k_ik_j = k_jk_i,\qquad k_{\tau i}=k_i^{-1}\\
& k_iB_jk_i^{-1} = q^{c_{ij}-c_{\tau i,j}} B_j\\
& B_iB_{j}- B_jB_i\stackrel{(c_{ij}=0)}{=\!\!=\!\!=\!\!=\!\!=}\delta_{\tau i,j}\frac{k_i-k_i^{-1}}{q-q^{-1}}\\
& \text{$\imath$Serre relations for $B_i$'s}
\end{aligned}
\\\hline
\end{array}\\\vphantom{\dfrac{1}{2}}
\text{Coulomb branch side}
\end{matrix}
$$
Since ${}^\tau\mathbf{B}$ and $\mathbf{U}^\imath$ can be defined from the same underlying combinatorial data, yet are in general quite different as algebras, it is natural to ask whether there is a deeper relationship between them.  In this paper, we show that such a connection arises through \emph{three-dimensional mirror symmetry}.  More precisely, we consider a gauge theory $(G, \mathbf{N})$ in which the algebra ${}^\tau\mathbf{B}$ naturally corresponds to the Higgs branch side, while the algebra $\mathbf{U}^\imath$ arises from the Coulomb branch side.

The Higgs/Coulomb branches originate from three-dimensional $\mathcal{N} = 4$ supersymmetric gauge theories, see \cite{Na16,BF19,WY23} for introductions aimed at mathematicians. Such a theory is expected to possess a well-defined moduli space of vacua, which contains two distinguished components known as the \emph{Higgs branch} and the \emph{Coulomb branch}.  
These branches form a mirror-symmetric dual pair.  
The Higgs branch admits a mathematical description as a Hamiltonian reduction, while a rigorous definition of Coulomb branches was only recently given by Braverman, Finkelberg, and Nakajima \cite{BFN18} via the geometry of the affine Grassmannian.

The gauge theory we use to establish the connection between ${}^\tau\mathbf{B}$ and $\mathbf{U}^\imath$ arises from the representation theory of a quiver with an involution $(Q,\tau)$. 
Firstly, we shall orientate the Satake diagram to get a quiver with involution. For example,
$$
\begin{matrix}
\begin{tikzpicture}[scale=1.2,
                    arrow/.style={red, <->, thick}]
\foreach \i in {0,...,5}
    \node[circle, draw, inner sep=3pt] (a\i) at (\i,0) {};
    \foreach \i in {0,...,5}
    \node (b\i) at (\i,.1) {};
    \node[red] at (2.5,1.5){$\tau$};
\draw[arrow] (b0) to[bend left=50] (b5);
\draw[arrow] (b1) to[bend left=40] (b4);
\draw[arrow] (b2) to[bend left=30] (b3);
\draw[-,ultra thick] (a0) -- (a1); 
\draw[-,ultra thick] (a1) -- (a2); 
\draw[-,ultra thick] (a2) -- (a3); 
\draw[-,ultra thick] (a3) -- (a4); 
\draw[-,ultra thick] (a4) -- (a5); 
\end{tikzpicture}
\end{matrix}
\quad\Longrightarrow\quad
\begin{matrix}
\begin{tikzpicture}[scale=1.2,
                    arrow/.style={red, <->, thick}]
\node[red] at (2.5,1.5){$\tau$};
\foreach \i in {0,...,5}
    \node[circle, draw, inner sep=3pt] (a\i) at (\i,0) {};
    \foreach \i in {0,...,5}
    \node (b\i) at (\i,.1) {};
\draw[arrow] (b0) to[bend left=50] (b5);
\draw[arrow] (b1) to[bend left=40] (b4);
\draw[arrow] (b2) to[bend left=30] (b3);
\draw[->,ultra thick] (a0) -- (a1); 
\draw[<-,ultra thick] (a1) -- (a2); 
\draw[->,ultra thick] (a2) -- (a3); 
\draw[<-,ultra thick] (a3) -- (a4); 
\draw[->,ultra thick] (a4) -- (a5); 
\end{tikzpicture}
\end{matrix}
$$
Let $Q_0$ denote the set of nodes and $Q_1$ denote the set of edges of $Q$. Suppose $V$ and $W$ are two $Q_0$-graded vector spaces. 
We denote 
$$
G_V =\prod_{i\in Q_0}GL(V_i),\qquad 
E_V =\bigoplus_{h\in Q_1}\Hom(V_{s(h)},V_{t(h)}),\qquad 
L_{W,V}=\bigoplus_{i\in Q_0}\Hom(W_i,V_i).$$
Note that the pair $(G_V,E_V\oplus L_{W,V})$ defines a quiver gauge theory \cite{BFN19}. 

Now suppose that $V$ is equipped with a nondegenerate symmetric bilinear form 
\(
\langle - , - \rangle : V \times V \to \mathbb{C}
\)
that is compatible with the involution $\tau$ on the quiver, i.e., for each $i \in Q_0$, the orthogonal complement
\(
V_i^\perp = \bigoplus_{j \neq \tau i} V_j .
\)
This structure allows us to restrict both the gauge group and the representation space:
\[
G_V^\tau = \{ g \in G_V \mid g^t = g^{-1} \} \subseteq G_V, \qquad
E_V^\tau = \{ f \in E_V \mid f^t = -f \} \subseteq E_V ,
\]
where $g^t$ and $f^t$ denote transposes with respect to $\langle - , - \rangle$. 
Thus we only allow orthogonal gauge transformations and skew-symmetric linear maps, see \cite{En09,VV11}. The combined space
\(
E_V^\tau \oplus L_{W,V}
\)
parametrizes \emph{skew-symmetric $Q$-representations on $V$ framed by $W$}. For example, in the following diagram, where red arrows indicate pairings of edges under the involution $\tau$, whenever there is a matching between two linear maps $f\mathop{\color{red}\leftrightarrow} g$, we impose the condition
$f+g^t=0$. 
$$
\begin{matrix}
\xymatrix{
W_1\ar[d]& W_2\ar[d]& W_3\ar[d]&
W_4\ar[d]& W_5\ar[d]& W_6\ar[d]\\
V_1\ar@{->}[r]_{}="f1" & V_2 \ar@{<-}[r]_{}="f2" & V_3\ar@{->}[r]_{}="f3" & V_4 \ar@{<-}[r]_{}="f4"& V_5\ar@{->}[r]_{}="f5" & V_6
\ar@[red]@{<->}@/^2.5pc/"f1";"f5"
\ar@[red]@{<->}@/^2pc/"f2";"f4"
\ar@[red]@(ul,ur)@{<->}"f3";"f3"}%\\
\end{matrix}$$

We will consider the gauge theory $(G,\bfN)$ where 
$$
G=G_V^{\tau},\textit{\quad and \quad}
\bfN = E_V^\tau\oplus L_{W,V}. $$
We also make the following assumptions:
(1) the quiver $Q$ is simply-laced; (2) the involution $\tau$ has no fixed vertices. 
The assumption (2) also appeared in both \cite{En09} (right after Prop. 2.10) and \cite[Section 4.1]{VV11}. 

On one hand, by \cite{En09} and more generally by \cite{VV11}, 
there is an action of ${}^\tau\mathbf{B}$ on 
the category of $G$-equivariant perverse sheaves over $\bfN$, and it categorifies a representation ${}^\tau V(\lambda)$ of ${}^\tau\mathbf{B}$. 
This can be viewed as the study of $\mathcal{N}=2$ Higgs branch associated with $(G,\bfN)$.

On the other hand, 
for simply-laced Dynkin quivers, there exists a surjective homomorphism from the shifted Yangian associated with the underlying simple Lie algebra onto the quantized Coulomb branch algebra, see \cite{BFN19}. In the context of symmetric pairs, the natural analogues are given by the twisted Yangians, which appear as coideal subalgebras of ordinary Yangians. Lu–Zhang \cite{LZ24} recently established a Drinfeld-type presentation for all quasi-split twisted Yangians, and showed that twisted Yangians can be obtained as degenerations of the Drinfeld-type presentations of affine $\imath$quantum groups, see also \cite{LW21,Zh22,LWZ25}. From a historical perspective, twisted Yangians arose in the study of reflection equations and boundary integrable models \cite{Ch84,Sk88}. They were first introduced by Olshanski \cite{Ol92} in the $R$-matrix formalism for types AI and AII, extended to type AIII by Molev–Ragoucy \cite{MR02}, and further developed for other classical types by Guay–Regelskis \cite{GR16}.

Utilizing the Drinfeld-type presentation of the twisted Yangians, we can define shifted twisted Yangians $\mathbf{Y}^\imath_\mu$ (see \cref{shiftedTY}) for any $\tau$-invariant coweight $\mu$. The following is our main result:

\begin{Th}[\cref{thm:main}]
There exists an algebra homomorphism from shifted twisted Yangian $\mathbf{Y}^\imath_\mu$ to the quantized Coulomb branch algebra $\mathcal{A}_\hbar$ of $(G,\bfN)$. 

\end{Th}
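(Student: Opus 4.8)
The plan is to construct the homomorphism $\mathbf{Y}^\imath_\mu \to \mathcal{A}_\hbar$ by explicitly matching the Drinfeld-type generators of the shifted twisted Yangian with natural elements of the quantized Coulomb branch algebra, following the strategy of Braverman--Finkelberg--Nakajima \cite{BFN19} for shifted Yangians, but adapted through the fixed-point/folding procedure dictated by the involution $\tau$. First I would recall the BFN presentation of $\mathcal{A}_\hbar$ for the quiver gauge theory $(G_V, E_V\oplus L_{W,V})$: the commutative part is the equivariant cohomology $H^{G_V\times\mathbb{C}^\times}_\bullet(\mathsf{pt})$, generated by the Chern roots of the tautological bundles $V_i$, and the noncommutative part comes from the monopole operators $\mathsf{E}_i^{(r)}, \mathsf{F}_i^{(r)}$ attached to minuscule coweights at each node, whose precise formulas (dressed by ratios of Chern-root products encoding the matter $\bfN$) are given in \cite{BFN19}. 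The key observation is that passing to $G = G_V^\tau$ and $\bfN = E_V^\tau\oplus L_{W,V}$ realizes $\mathcal{A}_\hbar(G,\bfN)$ as an analogue of the Coulomb branch for an orthogonal (rather than general linear) gauge group glued together in $\tau$-orbits; its Cartan part is now the $\tau$-invariant part of the Chern roots, i.e. for a $\tau$-orbit $\{i,\tau i\}$ one keeps a single Cartan torus, which is exactly the torus appearing on the $\imath$quantum-group / twisted-Yangian side.

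The main steps, in order, would be: (i) write down the defining relations of $\mathbf{Y}^\imath_\mu$ from \cref{shiftedTY} in terms of the generating series attached to each $\tau$-orbit of nodes, separating the "split" orbits $\{i,\tau i\}$ with $i\neq\tau i$ from any would-be fixed nodes (excluded by assumption (2)); (ii) assign to the Cartan generators of $\mathbf{Y}^\imath_\mu$ the $\tau$-symmetrized tautological classes in $H^{G}_\bullet(\mathsf{pt})\subseteq\mathcal{A}_\hbar$, with the shift by $\mu$ implemented, as in \cite{BFN19}, by the choice of ``boundary condition'' at $-\infty$, i.e. by multiplying the relevant Chern-root series by monomials in $\hbar$ of degree prescribed by the $\tau$-invariant coweight $\mu$; (iii) assign to the positive/negative Drinfeld generators $B_i^{(r)}$ (or the $e$- and $f$-type series of the twisted Yangian) the corresponding monopole operators of $\mathcal{A}_\hbar$, again $\tau$-symmetrized; (iv) verify each relation. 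The quadratic $ee$-, $ff$-, $ef$-relations and the Cartan commutation relations should follow from the BFN computation of products of monopole operators essentially verbatim, because these are local at a single $\tau$-orbit and the orthogonal-group abelianization behaves like two copies of the $GL$ case identified by $\tau$. The genuinely new input is the \emph{reflection-type} relation --- the twisted analogue of the Serre relation, i.e. the $\imath$Serre relation in its Yangian incarnation --- which mixes a node with its $\tau$-image and has no counterpart in \cite{BFN19}.

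I expect the main obstacle to be precisely verifying this $\imath$Serre / reflection relation inside $\mathcal{A}_\hbar$. On the algebra side it is a cubic (or higher) identity among the monopole operators $\mathsf{B}_i^{(r)}$ with an inhomogeneous right-hand side involving the Cartan series; one must compute the relevant threefold convolution product on the BFN space $\mathcal{R}_{G,\bfN}$ for the coweight $\varpi_i^\vee + \varpi_i^\vee - \varpi_j^\vee$-type combinations and show it collapses to the twisted-Yangian right-hand side. The plan for this step is to reduce, via the localization theorem and the standard BFN ``Ext-to-residue'' dictionary, to a rational-function identity in the Chern roots --- the same kind of identity that underlies the $\imath$Serre relations in the Drinfeld presentation of Lu--Zhang \cite{LZ24}, now with the matter contribution from $\bfN = E_V^\tau\oplus L_{W,V}$ supplying exactly the factors that make both sides agree. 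A secondary technical point is bookkeeping the shift: one must check that the $\mu$-dressing is consistent across all relations simultaneously and that it specializes, when $\mu = 0$, to the (unshifted) twisted Yangian mapping to $\mathcal{A}_\hbar(G_V^\tau, \bfN)$; this is handled, as in \cite{BFN19}, by tracking degrees in $\hbar$ and using that $\mu$ is $\tau$-invariant so that the dressing descends to the folded torus. Once the $\imath$Serre relation is in hand, the remaining verifications are routine, and the assignment on generators then extends to the desired algebra homomorphism $\mathbf{Y}^\imath_\mu \to \mathcal{A}_\hbar$.
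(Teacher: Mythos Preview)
Your proposal is correct and follows essentially the same route as the paper: embed $\mathcal{A}_\hbar$ into a difference algebra via equivariant localization, write down explicit rational-function candidates $H_i(u)$, $B_i(u)$ for the images of the generating series, verify the defining relations there as rational-function identities, and then observe that the images are minuscule monopole operators (Propositions~\ref{prop:fmono} and~\ref{prop:fmono2}) and hence actually land in $\mathcal{A}_\hbar$. The one place where the paper does more than your sketch anticipates is the $\imath$Serre relation \eqref{eq:iSerre}: in the shifted setting no clean generating-function form of this relation is available (Remark~\ref{rem:iSerre}), so a direct residue check of the full family is not feasible. Instead the paper introduces auxiliary elements $\tilde{H}_{i,n}$ in the difference algebra, built from Bernoulli polynomials and satisfying $[\tilde{H}_{i,n},B_{i,s}]=B_{i,n+s}$ and $[\tilde{H}_{i,n},B_{\tau i,s}]=-(-1)^nB_{\tau i,n+s}$, and runs a Levendorskii-style bracketing induction (Proposition~\ref{prop:finite=>affine}) to reduce the entire family \eqref{eq:iSerre} to the single case $k_1=k_2=r=0$; that case does admit a closed generating-function form \eqref{equ:iSerreitaui} and is then dispatched by the kind of residue computation you describe.
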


When the Satake diagram consists of two identical Dynkin diagrams, with the involution interchanging the nodes in corresponding positions, our result recovers the construction of \cite{BFN19}; see also \cite{We19}. 

The theorem is proved as follows. Via equivariant localization, we can embed $\mathcal{A}_\hbar$ into a suitable difference operator algebra. On the other hand, we produce a Gerasimov--Kharchev--Lebedev--Oblezin (GKLO)-type \cite{GKLO05}  homomorphism from $\mathbf{Y}^\imath_\mu$ to the same difference algebra, and we show that this homomorphism factors through the quantized Coulomb branch algebra $\mathcal{A}_\hbar$. 
By our construction, the Cartan generators $h_{i,r}$ of the shifted twisted Yangians are mapped into the Gelfand--Tsetlin subalgebra, while the generators $b_{i,s}$ are mapped to minuscule monopole operators.

Our work is influenced by and partially confirms a proposal of Lu, Wang, and Weekes \cite{LWW} that links shifted twisted Yangians to GKLO representations, affine Grassmannian slices, and Coulomb branches. Their work provides a comprehensive discussion of this framework, including a uniform construction of GKLO-type representations for shifted twisted Yangians of all quasi-split types.

Let us close the introduction by discussing some further directions. Firstly, in the untwisted setting, the homomorphism in \cref{thm:main} becomes surjective after a base change to the function field $\mathbb{C}(\hbar)$, see \cite{BFN19, We19}. We expect this to hold in the twisted setting as well. We note that the arguments in \cite{We19} do not generalize directly to our setting. Secondly, on the K-theory level, Finkelberg--Tsymbaliuk proved that the shifted quantum affine algebras map homomorphically into the K-theoretic quantized Coulomb branch of quiver gauge theories \cite{FT19}. The methods in this paper can be used to show that such an algebra homomorphism also exists for the shifted affine $\imath$quantum groups, and we will address this in a subsequent work.
Lastly, the Higgs and Coulomb branches are usually related by the three-dimenssional mirror symmetry. For example, the Higgs branch of a quiver gauge theory is the Nakajima quiver variety, while the Coulomb branch is the generalized affine Grassmannian slices \cite{BFN19}. We expect that for the gauge theory $(G,\mathbf{N})$, the Higgs branch
might be a generalization of the Nakajima’s quiver variety.
Moreover, as observed in \cite{LWW}, the Coulomb branch should be related to the fixed-point loci of the affine Grassmannian slices.

\subsection*{Acknowledgment}
We would like to thank David Hernandez, Kang Lu, Peng Shan, Weiqiang Wang, Alex Weekes, Weinan Zhang and Yehao Zhou for useful discussions. 
CS is supported by the National Key R\&D Program of China (No. 2024YFA1014700).
YS is partially supported by the Fields Institute.

\section{Coulomb branches of cotangent type}

\subsection{BFN Coulomb Branch}
Let $G$ be a complex reductive group with a Borel subgroup $B$ and a maximal torus $T$. Let $X_*(T)$ be the cocharacter lattice of $T$ with dominant ones denoted by $X_*(T)^+$. Recall the dominance order on $X_*(T)^+$: for any $\lambda,\mu\in X_*(T)^+$, $\lambda\leq \mu$ iff $\mu-\lambda$ is a nonnegative linear combination of the positive coroots.
Let
$\calO=\mathbb{C}[[z]]$, 
$\calK=\mathbb{C}(\!(z)\!)$. We write $G_\calK:=G(\calK)$, $G_\calO:=G(\calO)$ and $T_\calK:=T(\calK)$, respectively. Any cocharacter $\lambda:\mathbb{G}_m\rightarrow T$ give a homomorphism $\mathcal{K}^*\rightarrow T_\mathcal{K}$, and we let $z^\lambda$ denote the image of $z\in \mathcal{K}^*$.
Recall the affine Grassmannian is defined to be $\Gr_G=G_\calK/G_\calO$. By the Cartan decomposition,
\[\Gr_G=\bigsqcup_{\lambda\in X_*(T)^+}\Gr_G^\lambda,\]
where $\Gr_G^\lambda:=G_\calO z^\lambda G_\calO/G_\calO$. Its closure $\bGr_G^\lambda=\bigsqcup\limits_{\mu\in X_*(T)^+,\mu\leq \lambda}\Gr_G^\mu$.

Let $\bfN$ be a complex representation of $G$. Let $\bfN_\calK:=\bfN(\calK)$ and $\bfN_\calO:=\bfN(\calO)$, respectively.
We consider
$$\mathcal{T}_{G,\bfN}
=G_\calK\times_{G_\calO} \bfN_\calO
=\{(gG_\calO,x):x\in g\bfN_\calO\}\subset 
\Gr_G\times \bfN_\calK.$$
The second projection $\mathcal{T}_{G,\bfN}\to \bfN_\calK$ could be viewed as an analogue of the Springer resolution. 
The (classical) \emph{BFN space} $\calR_{G,\bfN}$ \cite{BFN18} is defined by the pull back square
$$\xymatrix{
\calR_{G,\bfN} \ar@{^{(}->}[r] \ar@{^{(}->}[d] & \Gr_G\times \bfN_\calO \ar@{^{(}->}[d]\\
\mathcal{T}_{G,\bfN} \ar@{^{(}->}[r]&\Gr_G\times \bfN_\calK.}$$
Written explicitly, we have
$$\calR_{G,\bfN} = \{(gG_\calO,x): x\in g\bfN_\calO\cap \bfN_\calO\}\subset \Gr_G\times \bfN_\calO.$$
The definition is motivated by writing the Steinberg type variety in the following form:
\begin{equation}\label{eq:TNT=GGR}
\mathcal{T}_{G,\bfN}\times_{\bfN_\calK}
\mathcal{T}_{G,\bfN}
=G_\calK\times_{G_\calO}\calR_{G,\bfN}.
\end{equation}
Following \cite[Section 2(i)]{BFN18}, we consider the $\mathbb{C}^\times$ action on $\calR_{G,\bfN}$, which rotates $z\in \calO$ by weight $1$ and scales $\bfN$ by weight $\frac{1}{2}$ simultaneously. 
Let $\hbar$ be the equivariant parameter of this $\mathbb{C}^*$, and denote this $\mathbb{C}^*$ by $\mathbb{C}^\times_\hbar$. Hence,  
$H_{\mathbb{C}^\times_\hbar}^*(\pt)=\mathbb{C}[\hbar]$. 
The \emph{(quantized) Coulomb branch algebra} is defined to be the Borel--Moore homology
$$\mathcal{A}_\hbar(G,\bfN) = 
H^{G_{\calO}\rtimes\mathbb{C}_\hbar^\times}_*(\calR_{G,\bfN}).$$
By \cite{BFN18}, $\mathcal{A}_\hbar(G,\bfN)$ admits a convolution product induced by \eqref{eq:TNT=GGR}. 
The subalgebra $H^{G_\calO\rtimes \mathbb{C}_\hbar^\times}(\pt)\subset \mathcal{A}_\hbar(G,\bfN)$ is called the \emph{Gelfand--Tsetlin subalgebra}. 

Now let us discuss some backgrounds for the computation of the Coulomb branch algebra $\mathcal{A}_\hbar(G,\bfN)$. 
Recall $T$ is a maximal torus of $G$. Let $W$ be the Weyl group, and let $\mathfrak{t}$ be the Lie algebra of $T$, and $\mathbb{A}^1$ be the Lie algebra of $\mathbb{C}^*_\hbar$.

First of all, the Coulomb branch algebra can be embedded into the difference algebra of $T$ as follows. 
We define the \emph{difference algebra} $\Diff_\hbar(T)$ of $T$ to be 
$$\Diff_\hbar(T)=\mathbb{C}(\mathfrak{t}\times \mathbb{A}^1)\rtimes X_*(T),$$
where $\mathbb{C}(\mathfrak{t}\times \mathbb{A}^1)$ is the field of rational functions over $\mathfrak{t}\times \mathbb{A}^1$. 
More precisely, $\Diff_\hbar(T)$ is the free $\mathbb{C}(\mathfrak{t}\times \mathbb{A}^1)$-module with basis $d_\mu$ for cocharacters $\mu\in X_*(T)$ such that 
$$(f(t,\hbar)d_\lambda)\cdot (g(t,\hbar)d_\mu)=f(t,\hbar)g(t+\lambda\hbar,\hbar)d_{\lambda+\mu}.$$
We could view elements in $\Diff_\hbar(T)$ as difference operators on $\mathbb{C}(\mathfrak{t}\times \mathbb{A}^1)$. 
By \cite[Lemma 5.10]{BFN18}, we have an embedding
$
\iota_*:\mathcal{A}_\hbar(T,\bfN)^W
\hookrightarrow
\mathcal{A}_\hbar(G,\bfN)$.
By \cite[Lemma 5.9]{BFN18}, it is an isomorphism after base changing to the fraction field of $H_T^*(\pt)$. 
By \cite[Lemma 5.11]{BFN18}, we have the following embedding
$
z^*:\mathcal{A}_\hbar(T,\bfN)
\hookrightarrow
\mathcal{A}_\hbar(T,0)$. 
The explicit computation in \cite[Section 4]{BFN18} implies $\mathcal{A}_\hbar(T,0)\hookrightarrow \Diff_\hbar(T)$ with 
$H_{T_\calO\times \mathbb{C}_\hbar^\times}^*(\pt)$ identified with the $\mathbb{C}[\mathfrak{t}\times \mathbb{A}^1]$. 
As a result, the composition of above maps gives 
an embedding of algebra
\begin{equation}\label{eq:embeddiff}
z^*\circ (\iota_*)^{-1}:\mathcal{A}_\hbar(G,\bfN)\stackrel{\subset}\longrightarrow \Diff_\hbar(T). 
\end{equation}
In this paper, we will identify 
$\mathcal{A}_\hbar(G,\bfN)$ as a subalgebra of the difference algebra $\Diff_\hbar(T)$. 

Secondly, the Coulomb branch algebra is equipped with a structure of filtered algebra, whose associated graded algebra can be described explicitly as follows. 
Recall there is a natural projection 
$$\pi:\calR_{G,\bfN}\longrightarrow \Gr_G.$$
Let us denote 
$$\calR_{\lambda}=\pi^{-1}(\Gr_G^\lambda),\qquad \textit{ and } \calR_{\leq \lambda} = \pi^{-1}(\bGr_G^\lambda).$$
By \cite[Proposition 6.1]{BFN18}, the Coulomb branch algebra $\mathcal{A}_\hbar(G,\bfN)$ is naturally filtered by 
$H^{G_\calO\times\mathbb{C}_\hbar^\times}_*(\calR_{\leq \lambda})$ and the associated graded algebra is 
$$\gr\mathcal{A}_\hbar(G,\bfN)=\bigoplus_{\lambda\text{ dominant}} H_*^{G_\calO\times\mathbb{C}_\hbar^\times}(\calR_\lambda).$$
Note that we can identify
$
H_*^{G_\calO}(\calR_\lambda) \simeq H_*^{G_\calO}(\Gr_\lambda) \simeq \mathbb{C}[\mathfrak{t}]^{W_\lambda}$ where $W_\lambda$ is the stabilizer group of $\lambda$ in the Weyl group $W$. 
For $f\in \mathbb{C}[\mathfrak{t}]^{W_\lambda}\simeq H_*^{G_\calO}(\calR_\lambda)$, we define the corresponding element by 
$$f[\calR_\lambda]\in \gr\mathcal{A}_\hbar(G,\bfN).$$
From the above construction, a lifting of $f[\mathcal{R}_\lambda]$ takes the following form, embedded into the difference algebra \eqref{eq:embeddiff} (see \cite{BFN18}),
\begin{equation}\label{eq:monopoleop}
\sum_{w\in W^\lambda}
w\left(f\cdot \frac{\Eu\left(
z^\lambda\bfN_\calO/(z^\lambda\bfN_\calO\cap\bfN_\calO)\right)}{
\Eu(T_{\lambda}\Gr_\lambda)
}\right)
d_{w\lambda}+\text{lower term},
\end{equation}
where $W^\lambda$ is the set of minimal length representatives of cosets in $W/W_\lambda$,  $\Eu(-)$ is the $T\times\mathbb
{C}^*_\hbar$-equivariant Euler class, i.e. the product of weights, and $T_{\lambda}\Gr_\lambda
$ is the tangent space of $\Gr_\lambda$ at the torus fixed point $z^\lambda G_\calO/G_\calO$. Note that we have a bijection $W^\lambda\cong W\cdot \lambda$ by $w\mapsto w\lambda$. 
Here ``lower term'' means linear combinations of $d_\mu$ with $\mu^+< \lambda$ under the dominant order, where $\mu^+$ is the unique dominant cocharacter in its Weyl group orbit $W\mu$. 
Note that $\Eu(T_\lambda\Gr_\lambda)$ is the product of inversions of the (extended) affine Weyl group element $t_\lambda$ of translation by $\lambda$. More generally, for any $\mu\in W\lambda$, 
\begin{equation}\label{equ:wts}
    T_\mu\Gr_\lambda = \bigoplus_{\alpha\in R, \langle\mu,\alpha\rangle\geq 1}\bigoplus_{n=0}^{\langle\mu,\alpha\rangle - 1}\mathfrak{g}_\alpha z^n,
\end{equation}
where $R$ is the set of roots of $G$.

Finally, we need the minuscule monopole operators.
Recall a cocharacter $\lambda$ is called \emph{minuscule} if 
$\langle \lambda,\alpha\rangle\in \{0,1\}$
for all positive roots $\alpha$. Equivalently, it is a minimal dominant cocharacter under the dominant order. 
As a result, when $\lambda$ is minuscule, \eqref{eq:monopoleop} has no lower terms, i.e.
\begin{equation}\label{eq:minu-mono-op}
f[\mathcal{R}_\lambda]=\sum_{w\in W^\lambda}
w\left(f\cdot \frac{\Eu\left(
z^\lambda\bfN_\calO/(z^\lambda\bfN_\calO\cap\bfN_\calO)\right)}{
\Eu(T_{\lambda}\Gr_\lambda)
}\right)
d_{w\lambda}\in \mathcal{A}_\hbar(G,\bfN).
\end{equation}
On the other hand, we have 
$\Gr_\lambda=\bGr_\lambda=G/P_\lambda$ where $P_\lambda$ is a parabolic subgroup whose Weyl group is $W_\lambda$. 
In particular, 
$T_\lambda\Gr_\lambda=T_{1}G/P_\lambda=\mathfrak{g}/\mathfrak{p}_\lambda$, where $\mathfrak{p}_\lambda=\operatorname{Lie}P_\lambda$. 

It is worth mentioning the classification of minuscule cocharacters. 
By definition, a cocharacter of $G$ is minuscule if and only if its image in $G_{ad}=G/Z(G)$ is minuscule. 
A cocharacter $\lambda$ of $G_1\times \cdots \times G_\ell$ is minuscule if and only if the component $\lambda_i$ is minuscule for each $1\leq i\leq \ell$. 
For a simple group $G$, a nontrivial minuscule cocharacter is necessarily a fundamental coweight, which is minuscule if and only if the corresponding node is conjugate to the affine node in the associated affine Dynkin diagram by a diagram automorphism. 

Since $\bfN$ is a $G$-representation, 
$$w\Eu(z^\lambda \bfN_\calO/(z^\lambda \bfN_\calO\cap \bfN_\calO))
=\Eu(z^{w\lambda}\bfN_\calO/(z^{w\lambda} \bfN_\calO\cap \bfN_\calO)).$$
For a general cocharacter $\lambda$ and any $T$-invariant summand $S$ of $\bfN$, if 
$$z^\lambda S_\calO = z^{\lambda}S\otimes \calO=S\otimes z^m\calO,\qquad \textit{ for  some } m\in \mathbb{Z},$$
then 
$$z^{\lambda}S_\calO/(z^{\lambda}S_\calO\cap S_\calO)
=S\otimes \begin{cases}
\{0\}, & m\geq 0,\\
\mathbb{C}z^{-1}\oplus \cdots \oplus \mathbb{C}z^{m}, & m< 0.
\end{cases}$$
In particular, when $S$ is of dimension $1$ of weight $\alpha$, then 
$$\Eu(z^{\lambda}S_\calO/(z^{\lambda}S_\calO\cap S_\calO))
=\begin{cases}
1 , & m\geq 0,\\
(\alpha-\tfrac{\hbar}{2})
(\alpha-\tfrac{3\hbar}{2})
\cdots
(\alpha-\tfrac{(2|m|-1)\hbar}{2})
, & m< 0,
\end{cases}$$
where $m=\langle \lambda, \alpha\rangle$. Recall that the vectors of $S$ are assumed to have $\mathbb{C}_\hbar^\times$-weight $\hbar/2$.

Let $F$ be a reductive group and $\tilde{G}=G\times F$. 
We assume further that the $G$-representation $\bfN$ can be extended to a $\tilde{G}$-representation. 
Then we can slightly extend the above definition by 
$$\mathcal{A}_\hbar := 
H^{(G_{\calO}\times F_{\calO})\rtimes\mathbb{C}_\hbar^\times}_*(\calR_{G,\bfN}).$$
The dependence of $F$ will be clear from the context. 
Then the embedding \eqref{eq:embeddiff} becomes
$$\mathcal{A}_\hbar\hookrightarrow H_F^*(\pt)\otimes\Diff_\hbar(T).$$
The results in this section still hold.

\subsection{Quiver with involution}\label{sec:quiverinv}
Let $Q=(Q_0,Q_1,s,t)$ be a quiver, where $Q_0$ is the set of vertices while $Q_1$ is the set of arrows. For any $h\in Q_1$, $s(h)$ (resp. $t(h)$) denotes the source (resp. target) of the edge $h$.
For two $Q_0$-graded vector spaces $W,V$ with dimension vectors $\mathbf{w}=(w_i)_{i\in Q_0},\mathbf{v}=(v_i)_{i\in Q_0}$, 
let us denote 
\begin{align*}
G_V & =\prod_{i\in Q_0}GL(V_i),&
G_W & = \prod_{i\in Q_0}GL(W_i),\\
E_V & =\bigoplus_{h\in Q_1}\Hom(V_{s(h)},V_{t(h)}),&
L_{W,V} & = \bigoplus_{i\in Q_0}\Hom(W_i,V_i).
\end{align*}
The data
$$G=G_V,\qquad \bfN = E_V\oplus L_{W,V},\qquad F=G_W$$
gives rise to a \emph{quiver gauge theory}, whose Higgs branch is the Nakajima quiver variety and
its Coulomb branch algebra was computed to be the truncated shifted Yangians, see \cite[Appendix]{BFN19} and \cite{We19}.

Now assume that there is an involution $\tau$ on $Q=(Q_0,Q_1,s,t)$ satisfying
\begin{itemize}
    \item 
    $s(\tau(h))=\tau(t(h))$ and $ t(\tau(h))=\tau(s(h))$; 
    \item $\tau(s(h))=t(h)$ if and only if $\tau(h)=h$. 
\end{itemize}
To avoid confusion, we always draw the involution $\tau$ by red arrows.

We remark that the pair $(Q,\tau)$ is called a \emph{quiver with involution} in the literature; see \cite{EK07} and \cite{VV11}. 
Assume $\bigoplus_{i\in Q_0}V_i$ is equipped with a nondegenerate symmetric bilinear form $\langle-,-\rangle$ such that the orthogonal complement
$V_i^\perp=\bigoplus_{j\neq \tau i} V_j$. 
In particular, $\langle-,-\rangle$ restricts to a perfect pairing between $V_i$ and $V_{\tau i}$. Hence we can identify $V_i=V_{\tau i}^*$ and $\mathbf{v}$ is $\tau$-invariant. 
Let us denote 
\begin{align*}
G_V^{\tau} &=\left\{g\in G_V\mid g_i^t = g_{\tau i}^{-1}, \forall i\in Q_0\right\}\subseteq G_V,\\
E_V^\tau   &=\left\{f\in E_V\mid f_{\tau h}=-f_h^t, \forall h\in Q_1 \right\}\subseteq E_V.
\end{align*}
Hence, $G_V^\tau$ acts on $E_V^\tau$.
From now on, we will make two technical assumptions. We first assume 
\begin{equation}\label{equ:simplylaced}
\text{the quiver $Q$ is simply-laced without self-loop.}
\end{equation}
Following \cite{En09} and \cite[Section 4.1]{VV11}, we also put the following constraint on the involution
\begin{equation}\label{eq:nofixedpoint}
\tau i\neq i\text{ for any $i\in Q_0$}.
\end{equation}
With these assumptions, we will study the Coulomb branch algebra for
$$G=G_V^{\tau},\qquad \bfN = E_V^\tau \oplus L_{W,V},\qquad 
F=G_W. $$

In order to make the computation more precise, we can pick a decomposition $Q_0=Q_0^{+}\sqcup Q_0^-$ such that $i\in Q_0^{+}$ if and only if $\tau i\in Q_0^-$. 
Then the composition 
$$G_V^{\tau}\hookrightarrow G_V
\stackrel{\text{pr}}\longrightarrow \prod_{i\in Q_0^+}GL(V_i)$$
is an isomorphism. 
Let $Q_1^\tau$ be the subset of edges in $Q_1$, which are fixed by the involution $\tau$. 
For a decomposition 
$Q_1\setminus Q_1^\tau=Q_1^+ \sqcup Q_1^-$ such that $h\in Q_1^{+}$ if and only if $\tau(h)\in Q_1^-$, 
the composition
$$E_V^\tau \hookrightarrow E_V
\stackrel{\text{pr}}\longrightarrow 
\bigoplus_{h\in Q_1^+}
\Hom(V_{s(h)},V_{t(h)})
\oplus 
\bigoplus_{h\in Q_1^\tau}
\operatorname{Alt}(V_{s(h)},V_{t(h)})
$$
is an isomorphism, where 
$$\operatorname{Alt}(V_{s(h)},V_{t(h)})
=\mathsf{\Lambda}^2 V_{s(h)}^*
=\mathsf{\Lambda}^2 V_{t(h)}
$$
can be viewed as a quotient of $\Hom(V_{s(h)},V_{t(h)})$ if $h\in Q_1^\tau$. 

\begin{example}[Diagonal Type]
Assume that the quiver $Q$ is the disjoint union of two identical copies, denoted $Q^+$ and $Q^-$. 
Let $\tau: Q \to Q$ be the involution that interchanges the vertices in corresponding positions. 
$$
\begin{tikzpicture}[
  scale=1,
  every node/.style={circle, draw, inner sep=3pt},
  arrow/.style={red, <->, thick}
]

\node (t1) at (-1,1) {};   % upper fork
\node (t2) at (0,0) {};   % center
\node (t3) at (-1,-1) {};  % lower fork
\node (t4) at (1,0) {};
\node (t5) at (2,0) {};
\node (t6) at (3,0) {};
\node (t7) at (4,0) {};

\draw[->,ultra thick] (t1) -- (t2); 
\draw[->,ultra thick] (t2) -- (t3);
\draw[<-,ultra thick] (t2) -- (t4);
\draw[<-,ultra thick] (t4) -- (t5);
\draw[<-,ultra thick] (t5) -- (t6);
\draw[->,ultra thick] (t6) -- (t7);

\node (b1) at (-1,0) {};   % lower fork
\node (b2) at (0,-1) {};   % center
\node (b3) at (-1,-2) {};   % upper fork
\node (b4) at (1,-1) {};
\node (b5) at (2,-1) {};
\node (b6) at (3,-1) {};
\node (b7) at (4,-1) {};

\draw[<-,ultra thick] (b1) -- (b2);
\draw[<-,ultra thick] (b2) -- (b3);
\draw[->,ultra thick] (b2) -- (b4);
\draw[->,ultra thick] (b4) -- (b5);
\draw[->,ultra thick] (b5) -- (b6);
\draw[<-,ultra thick] (b6) -- (b7);

\draw[arrow] (t1) to[bend right=30] (b1);
\draw[arrow] (t2) to[bend left=20] (b2);
\draw[arrow] (t3) to[bend right=30] (b3);
\draw[arrow] (t4) to[bend left=15] (b4);
\draw[arrow] (t5) to[bend left=15] (b5);
\draw[arrow] (t6) to[bend left=15] (b6);
\draw[arrow] (t7) to[bend left=15] (b7);

\end{tikzpicture}$$
Let $V^+ = \bigoplus_{i\in Q_0^+}V_i$ be the $Q_0^+$-graded vector space.
Then we have isomorphisms
$$G_V^{\tau} \simeq G_{V^+},\qquad E_V^\tau \simeq E_{V^+}.$$
\end{example}

\begin{example}[Type AIII]
Consider a quiver with involution obtained from a Satake diagram of type AIII (see \cite{Ara62}), i.e. 
$$Q_0=\{1,\ldots,2n\},\qquad 
\tau i = 2n+1-i$$
and $Q_1$ is chosen such that 
$$
\texttt{\#}\big\{h\in Q_1:\{s(h),t(h)\}=\{i,j\}\big\}=\delta_{|i-j|,1}.$$
We can choose $Q_0^+=\{1,\ldots,n\}$ and $Q_1^+=\{h\in Q_1: \max\{s(h),t(h)\}\leq n\}$.
For example, when $n=3$ we have
$$\begin{tikzpicture}[scale=1, 
                    arrow/.style={red, <->, thick}]
\foreach \i in {0,...,5}
    \node[circle, draw, inner sep=3pt] (a\i) at (\i,0) {};
    \foreach \i in {0,...,5}
    \node (b\i) at (\i,.05) {};
    \foreach \i in {1,...,6}
    \node  at (\i-1,-.5) {$\scriptstyle \i$};
\draw[arrow] (b0) to[bend left=50] (b5);
\draw[arrow] (b1) to[bend left=40] (b4);
\draw[arrow] (b2) to[bend left=30] (b3);
\draw[->,ultra thick] (a0) -- (a1); 
\draw[->,ultra thick] (a2) -- (a1); 
\draw[->,ultra thick] (a2) -- (a3); 
\draw[->,ultra thick] (a4) -- (a3); 
\draw[->,ultra thick] (a4) -- (a5); 
\end{tikzpicture}
$$
Then we have 
\begin{align*}
G_V^{\tau} & \cong GL(v_1)\times GL(v_2)\times GL(v_3)\\
E_V^\tau & \cong \Hom(\mathbb{C}^{v_1},\mathbb{C}^{v_2})\oplus 
\Hom(\mathbb{C}^{v_3},\mathbb{C}^{v_2})\oplus 
\wedge^2(\mathbb{C}^{v_3}).
\end{align*}
\end{example}

\begin{example}
We remark that the quiver can be very general, for example, it can be of the following types:
$$
\begin{tikzpicture}[
    every node/.style={circle, draw, inner sep=3pt},
    arrow/.style={red, <->, thick},scale=1
]

\def\n{8}
\def\rx{1.7}  % x-radius of ellipse
\def\ry{1.7}  % y-radius of ellipse

\foreach \i in {0,...,7} {
  \node (a\i) at ({\rx*cos(360/\n * \i+180/\n)}, {\ry*sin(360/\n * \i + 180/\n)}) {};
}

  \draw[->,ultra thick] (a0) -- (a1);
  \draw[<-,ultra thick] (a1) -- (a2);
  \draw[->,ultra thick] (a2) -- (a3);
  \draw[<-,ultra thick] (a3) -- (a4);
  \draw[->,ultra thick] (a4) -- (a5);
  \draw[<-,ultra thick] (a5) -- (a6);
  \draw[->,ultra thick] (a6) -- (a7);
  \draw[->,ultra thick] (a0) -- (a7);

\draw[arrow] (a1) to [bend right = 30] (a2);
\draw[arrow] (a0) to [bend right = 20] (a3);
\draw[arrow] (a4) to [bend right = -20] (a7);
\draw[arrow] (a5) to [bend right = -30] (a6);
\end{tikzpicture}
\qquad
\begin{tikzpicture}[scale=1, every node/.style={circle, draw, inner sep=3pt}, 
                    arrow/.style={red, <->, thick}]
\node(a0) at (0,0) {};
\node(a1) at (1,0) {};
\node(a2) at (2,0) {};
\node(a3) at (2,-1) {};
\node(a4) at (1,-1) {};
\node(a5) at (0,-1) {};
\node(NE) at (-1,0) {};
\node(SE) at (-1,-2) {};
\node(NW) at (-1,1) {};
\node(SW) at (-1,-1) {};
\draw[->,ultra thick] (a0) -- (NW); 
\draw[<-,ultra thick] (a0) -- (SW); 
\draw[<-,ultra thick] (a0) -- (a1); 
\draw[->,ultra thick] (a1) -- (a2); 
\draw[->,ultra thick] (a1) -- (a2); 
\draw[->,ultra thick] (a2) -- (a3); 
\draw[->,ultra thick] (a3) -- (a4); 
\draw[<-,ultra thick] (a4) -- (a5); 
\draw[<-,ultra thick] (a5) -- (NE); 
\draw[->,ultra thick] (a5) -- (SE); 
\draw[arrow] (a0) to[bend left=30] (a5);
\draw[arrow] (a1) to[bend left=30] (a4);
\draw[arrow] (a2) to[bend left=30] (a3);
\draw[arrow] (NW) to[bend left=-30] (NE);
\draw[arrow] (SW) to[bend left=-30] (SE);
\end{tikzpicture}
\qquad
\begin{tikzpicture}[scale=1,
    every node/.style={circle, draw, inner sep=3pt},
    arrow/.style={red, <->, thick}
]

\def\n{8}
\def\rx{1.8}  % x-radius of ellipse
\def\ry{1.8}  % y-radius of ellipse

\foreach \i in {0,...,7} {
  \node (a\i) at ({\rx*cos(360/\n * \i +180/\n)}, {\ry*sin(360/\n * \i+180/\n)}) {};
}

  \draw[<-,ultra thick] (a0) -- (a1);
  \draw[<-,ultra thick] (a1) -- (a2);
  \draw[->,ultra thick] (a2) -- (a3);
  \draw[<-,ultra thick] (a3) -- (a4);
  \draw[->,ultra thick] (a4) -- (a5);
  \draw[->,ultra thick] (a5) -- (a6);
  \draw[<-,ultra thick] (a6) -- (a7);
  \draw[->,ultra thick] (a7) -- (a0);

\foreach \i in {0,...,3} {
  \pgfmathtruncatemacro{\j}{mod(\i+4,\n)}
  % \draw[arrow] (a\i) to [bend left=30] (a\j);
  \draw[arrow] (a\i) to (a\j);
}
\end{tikzpicture}
$$
\end{example}

$$\begin{matrix}\begin{tikzpicture}[scale=.7, every node/.style={circle, draw, inner sep=3pt}] 
\node(a0) at (5.90,1.89) {}; \node(a1) at (7.70,2.85) {}; \node(a2) at (7.29,-1.02) {}; \node(a3) at (6.67,-3.46) {}; \node(a4) at (3.70,-1.86) {}; \node(a5) at (5.25,0.16) {}; \node(a6) at (9.31,1.97) {}; \node(a7) at (9.41,-0.54) {}; 
\draw[->,ultra thick] (a4) -- (a0); \draw[->,ultra thick] (a0) -- (a6); \draw[->,ultra thick] (a2) -- (a1); \draw[->,ultra thick] (a2) -- (a3); \draw[->,ultra thick] (a6) -- (a2); \draw[->,ultra thick] (a2) -- (a7); \draw[->,ultra thick] (a4) -- (a3); \draw[->,ultra thick] (a3) -- (a5); \draw[->,ultra thick] (a7) -- (a3); \draw[->,ultra thick] (a6) -- (a7); \draw[->,ultra thick] (a1) -- (a7); \draw[->,ultra thick] (a6) -- (a5); \draw[->,ultra thick] (a4) -- (a5); \draw[->,ultra thick] (a5) -- (a1); \draw[->,ultra thick] (a0) -- (a5); \draw[->,ultra thick] (a2) -- (a4); \draw[->,ultra thick] (a0) -- (a1); 
\draw[red, <->, thick] (a0) to (a7); 
\draw[red, <->, thick] (a1) to (a6); 
\draw[red, <->, thick] (a2) to (a5); 
\draw[red, <->, thick] (a3) to [bend left = 30] (a4);
\end{tikzpicture}\end{matrix}\qquad 
\begin{matrix}\begin{tikzpicture}[scale=.7, every node/.style={circle, draw, inner sep=3pt}] 
\node(a0) at (3.49,1.01) {}; \node(a1) at (7.72,-3.12) {}; \node(a2) at (7.54,-1.21) {}; \node(a3) at (5.88,-0.22) {}; \node(a4) at (6.86,1.13) {}; \node(a5) at (8.85,0.87) {}; \node(a6) at (9.83,-1.01) {}; \node(a7) at (4.55,2.56) {}; 
\draw[->,ultra thick] (a3) -- (a0); \draw[->,ultra thick] (a4) -- (a0); \draw[->,ultra thick] (a1) -- (a3); \draw[->,ultra thick] (a5) -- (a2); \draw[->,ultra thick] (a3) -- (a5); \draw[->,ultra thick] (a4) -- (a5); \draw[->,ultra thick] (a7) -- (a4); \draw[->,ultra thick] (a5) -- (a6); \draw[->,ultra thick] (a7) -- (a3); \draw[->,ultra thick] (a4) -- (a6); \draw[->,ultra thick] (a2) -- (a4); \draw[->,ultra thick] (a2) -- (a3); \draw[->,ultra thick] (a1) -- (a2); 
\draw[red, <->, thick] (a0) to (a7); 
\draw[red, <->, thick] (a1) to (a6); 
\draw[red, <->, thick] (a2) to [bend left = 30] (a5);
\draw[red, <->, thick] (a3) to (a4); 
\end{tikzpicture}\end{matrix}\qquad 
\begin{matrix}\begin{tikzpicture}[scale=.7, every node/.style={circle, draw, inner sep=3pt}] 
\node(a0) at (7.68,-0.61) {}; \node(a1) at (7.58,-3.20) {}; \node(a2) at (6.52,2.79) {}; \node(a3) at (4.44,2.17) {}; \node(a4) at (4.72,-2.12) {}; \node(a5) at (3.52,-0.53) {}; \node(a6) at (9.53,1.28) {}; \node(a7) at (6.03,0.21) {}; 
\draw[->,ultra thick] (a0) -- (a1); \draw[->,ultra thick] (a2) -- (a0); \draw[->,ultra thick] (a0) -- (a3); \draw[->,ultra thick] (a4) -- (a0); \draw[->,ultra thick] (a4) -- (a1); \draw[->,ultra thick] (a1) -- (a6); \draw[->,ultra thick] (a7) -- (a1); \draw[->,ultra thick] (a2) -- (a4); \draw[->,ultra thick] (a6) -- (a2); \draw[->,ultra thick] (a2) -- (a7); \draw[->,ultra thick] (a3) -- (a5); \draw[->,ultra thick] (a7) -- (a3); \draw[->,ultra thick] (a5) -- (a4); \draw[->,ultra thick] (a6) -- (a7); \draw[->,ultra thick] (a7) -- (a5); \draw[->,ultra thick] (a4) -- (a7); \draw[->,ultra thick] (a6) -- (a3); \draw[->,ultra thick] (a6) -- (a0); \draw[->,ultra thick] (a5) -- (a1); \draw[->,ultra thick] (a0) -- (a5); \draw[->,ultra thick] (a3) -- (a2); 
\draw[red, <->, thick] (a0) to (a7); 
\draw[red, <->, thick] (a1) to [bend right = 30] (a6);
\draw[red, <->, thick] (a2) to (a5); 
\draw[red, <->, thick] (a3) to (a4); 
\end{tikzpicture}\end{matrix}
$$

Let us pick a basis $\{e_{i,1},\ldots,e_{i,v_i}\}$ of each $V_i$ such that $\langle e_{i_1,j_1},e_{i_2,j_2}\rangle = \delta_{i_1,\tau i_2 }\delta_{j_1,j_2}$. 
This gives a choice of a maximal torus $T_V^\tau$ of $G_V^{\tau}$. Then we can identify 
\begin{align*}
H_{T_V^\tau}^*(\pt)
& =\bigotimes_{i\in Q_0^+}
\mathbb{Q}[x_{i,1},\ldots,x_{i,v_i}],
\end{align*}
and
$$X_*(T^\tau_V) = 
\bigoplus_{i\in Q_0^+}\mathbb{Z}\epsilon_{i,1}
\oplus \cdots \oplus \mathbb{Z}\epsilon_{i,v_i}.$$
Here $\epsilon_{i,j}$ is the $j$-th coordinate on the subtorus $T_{V_i}$ and $x_{i,j}$ is the $j$-th character of $T_{V_i}$. A cocharacter $\lambda=\sum\lambda_{i,j}\epsilon_{i,j}\in X_*(T^\tau_V)$ is dominant if 
$\lambda_{i,1}\geq \cdots \geq \lambda_{i,v_i}$ for each $i\in Q_0^+$. 
For $i\in Q_0^+$ and $1\leq j\leq v_i$, we denote the difference operator of $\epsilon_{i,j}$ by $d_{i,j}\in \Diff_\hbar(T_V^\tau)$. 
Let us introduce $x_{i,j}=-x_{\tau i,j}$ and $d_{i,j}=d_{\tau i,j}^{-1}$ for $i\in Q_0^-$. 
Hence, we have the following relations
$$
d_{i_1,j_1}x_{i_2,j_2}
=
(x_{i_2,j_2}+
(\delta_{i_1,i_2}
-\delta_{i_1,\tau i_2 })
\delta_{j_1,j_2}\hbar
)d_{i_1,j_1}
$$
for any $i_1,i_2\in Q_0$, $1\leq j_1\leq v_{i_1}$ and $1\leq j_2\leq v_{i_2}$. 
Let us pick a basis $\{f_{i,1},\ldots,f_{i,w_i}\}$ of each $W_i$. This gives a choice of a maximal torus $T_W$ of $G_W$, and we can identify 
$$H_{G_W}^*(\pt)
=\bigotimes_{i\in Q_0}
\mathbb{Q}[w_{i,1},\ldots,w_{i,w_i}]^{S_{w_i}}. 
$$

\subsection{Monopole operators}\label{sec:mono}
In this section, we give explicit formulae for some monopole operators of the Coulomb branch algebra $$\mathcal{A}_\hbar := 
H^{(G_{\calO}\times F_{\calO})\rtimes\mathbb{C}_\hbar^\times}_*(\calR_{G,\bfN})$$ associated to 
$$G=G_V^{\tau} \simeq \prod_{i\in Q_0^+}GL(V_i),\qquad \bfN = E_V^\tau \oplus L_{W,V},\qquad 
F=G_W. $$
Recall that it can be embeded into the difference algebra $H_{G_W}^*(\pt)\otimes\Diff_\hbar(T_V^\tau)$.

For $i\in Q_0$,
define the following polynomials
$$V_i(z) = \prod_{k=1}^{v_i}(z-x_{i,k}), \textit{ and \quad} 
W_i(z) = \prod_{k=1}^{w_i}(z-w_{i,k}).$$
Moreover, for $1\leq r\neq s\leq v_i$, let $$V_{i,r}(z)=\frac{V_i(z)}{z-x_{i,r}}, \textit{ and \quad} V_{i,\{r,s\}}(z)=\frac{V_i(z)}{(z-x_{i,r})(z-x_{i,s})}.$$

\def\r{{\color{red}r}}

For $i\in Q_0^+$, the dominant coweight $\epsilon_{i,1}\in X_*(T_V^\tau)$ is a minuscule coweight for $G$. Hence, the spherical Schubert cell $\Gr_{\epsilon_{i,1}}$ is closed and isomorphic to the projective space $\mathbb{P}^{v_i-1}$. More precisely, it is identified with the moduli space of $\calO$-modules $L$ such that 
\[z\calO\otimes V_i\subset L\subset \calO\times V_i,\quad \dim_\mathbb{C}\calO\otimes V_i/L =1.\]
There is a tautological line bundle on $\Gr_{\epsilon_{i,1}}$ whose fiber at $L$ is $\calO\otimes V_i/L$. Thus, the torus weight of this line bunlde at the fixed point $z^{\epsilon_{i,r}}$ is $x_{i,r}$. Let $\mathcal{Q}_i$ denote the pullback of this line bundle to $\calR_{\epsilon_{i,1}}$.  
\begin{prop}\label{prop:fmono}
Let $f\in \mathbb{Q}[x]$ be a polynomial in one variable. We have 
\begin{align*}
    &f(c_1(\mathcal{Q}_i)) \cap [\mathcal{R}_{\epsilon_{i,1}}]
    =\sum_{r=1}^{v_i}f(x_{i,r})
    \prod_{h\in Q_1\setminus Q_1^\tau \atop s(h)=i} (-1)^{v_{t(h)}}V_{t(h)}(x_{i,r}+\tfrac{\hbar}{2})\\
    &\cdot\prod_{h\in Q_1^\tau\atop s(h)=i}(-1)^{v_i-1}V_{\tau i,r}(x_{i,r}+\tfrac{\hbar}{2})\frac{W_{\tau i}(x_{\tau i,r}-\tfrac{\hbar}{2})}{V_{i,r}(x_{i,r})}d_{i,r}\in H_{G_W}^*(\pt)\otimes\Diff_\hbar(T_V^\tau).
\end{align*}
\end{prop}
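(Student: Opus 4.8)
The plan is to specialize the minuscule monopole operator formula \eqref{eq:minu-mono-op} to the cocharacter $\lambda=\epsilon_{i,1}$ and to evaluate each factor by a direct Euler class computation inside the difference algebra $H^*_{G_W}(\pt)\otimes\Diff_\hbar(T_V^\tau)$. First I would record the combinatorics. The cocharacter $\epsilon_{i,1}$ is minuscule for $G\cong\prod_{j\in Q_0^+}GL(V_j)$, since $\langle\epsilon_{i,1},\alpha\rangle\in\{0,1\}$ for every positive root $\alpha=x_{i,a}-x_{i,b}$ with $a<b$; its Weyl orbit is $\{\epsilon_{i,r}:1\le r\le v_i\}$, so $W^{\epsilon_{i,1}}$ consists of the transpositions $w_r$ determined by $w_r\epsilon_{i,1}=\epsilon_{i,r}$, and $d_{w_r\epsilon_{i,1}}=d_{i,r}$. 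Under the identification $H^{G_\calO}_*(\mathcal R_{\epsilon_{i,1}})\cong\mathbb C[\mathfrak t]^{W_{\epsilon_{i,1}}}$ the line bundle $\mathcal Q_i$ has first Chern class $x_{i,1}$ (its $T$-weight at the base fixed point, as recalled before the statement), so $f(c_1(\mathcal Q_i))\cap[\mathcal R_{\epsilon_{i,1}}]$ is the element $f(x_{i,1})[\mathcal R_{\epsilon_{i,1}}]$ and $w_r$ sends $f(x_{i,1})$ to $f(x_{i,r})$. As all Euler classes entering \eqref{eq:minu-mono-op} are $W$-equivariant, the claim reduces to computing, for each $r$,
\[
\frac{\Eu\big(z^{\epsilon_{i,r}}\bfN_\calO/(z^{\epsilon_{i,r}}\bfN_\calO\cap\bfN_\calO)\big)}{\Eu(T_{\epsilon_{i,r}}\Gr_{\epsilon_{i,1}})},
\]
the coefficient of $f(x_{i,r})\,d_{i,r}$.

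The denominator is immediate from \eqref{equ:wts}: the only roots $\alpha$ of $G$ with $\langle\epsilon_{i,r},\alpha\rangle\ge1$ are the $\alpha=x_{i,r}-x_{i,b}$ with $b\ne r$, each contributing a single line $\mathfrak g_\alpha z^0$, so $\Eu(T_{\epsilon_{i,r}}\Gr_{\epsilon_{i,1}})=\prod_{b\ne r}(x_{i,r}-x_{i,b})=V_{i,r}(x_{i,r})$.

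For the numerator I would decompose $\bfN=E_V^\tau\oplus L_{W,V}$ into one-dimensional $T_V^\tau\times T_W\times\mathbb C^\times_\hbar$ weight spaces and apply, line by line, the recipe recalled just before the statement: a weight-$\alpha$ line contributes $1$ when $m=\langle\epsilon_{i,r},\alpha\rangle\ge0$ and $(\alpha-\tfrac\hbar2)(\alpha-\tfrac{3\hbar}2)\cdots(\alpha-\tfrac{(2|m|-1)\hbar}2)$ when $m<0$. With the convention $x_{j,a}=-x_{\tau j,a}$ for $j\in Q_0^-$, each $\tau$-orbit $\{h,\tau h\}$ with $h\notin Q_1^\tau$ contributes one copy of $\Hom(V_{s(h)},V_{t(h)})$ with weights $x_{t(h),b}-x_{s(h),a}$, each $h\in Q_1^\tau$ contributes $\wedge^2 V_{s(h)}^*$ with weights $-x_{s(h),a}-x_{s(h),b}$ for $a<b$, and $L_{W,V}$ contributes $\bigoplus_{j\in Q_0}\Hom(W_j,V_j)$ with weights $x_{j,b}-w_{j,a}$. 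A short inspection shows that $m<0$ (and then always $m=-1$) occurs in exactly three families, all contributing through a factor $(\alpha-\tfrac\hbar2)$: for each $h\in Q_1\setminus Q_1^\tau$ with $s(h)=i$, the $v_{t(h)}$ lines of $\Hom(V_i,V_{t(h)})$ of weights $x_{t(h),b}-x_{i,r}$, whose product gives $(-1)^{v_{t(h)}}V_{t(h)}(x_{i,r}+\tfrac\hbar2)$; for each $h\in Q_1^\tau$ with $s(h)=i$, the lines of $\wedge^2 V_i^*$ of weights $-x_{i,a}-x_{i,r}$ with $a\ne r$, whose product gives $(-1)^{v_i-1}V_{\tau i,r}(x_{i,r}+\tfrac\hbar2)$; and the lines of $\Hom(W_{\tau i},V_{\tau i})$ of weights $x_{\tau i,r}-w_{\tau i,a}$, whose product gives $W_{\tau i}(x_{\tau i,r}-\tfrac\hbar2)$; all other lines have $m\ge0$. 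Two points need care: since $\tau$ has no fixed vertex, for $h\in Q_1^\tau$ one genuinely has $s(h)\ne t(h)$ and the exterior square $\wedge^2 V_{s(h)}^*$ is the correct weight space; and an arrow with $t(h)=\tau i$ is precisely the $\tau$-image of one with source $i$, so the $\tau$-orbits above are enumerated without repetition by $\{h\in Q_1\setminus Q_1^\tau:s(h)=i\}$. Multiplying these three contributions, dividing by $V_{i,r}(x_{i,r})$ from the previous paragraph, and summing $f(x_{i,r})(\cdots)d_{i,r}$ over $r=1,\dots,v_i$ gives exactly the asserted identity.

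The main obstacle is the weight-space bookkeeping of the previous paragraph: correctly isolating the contributing lines of $E_V^\tau$ — in particular the interplay between the two arrows of a $\tau$-orbit and the appearance of an exterior square at a $\tau$-fixed arrow — and then repackaging the products $\prod_b(x_{t(h),b}-x_{i,r}-\tfrac\hbar2)$ and $\prod_{a\ne r}(-x_{i,a}-x_{i,r}-\tfrac\hbar2)$ into $V_{t(h)}$ and $V_{\tau i,r}$ with the exact signs $(-1)^{v_{t(h)}}$ and $(-1)^{v_i-1}$. The remaining ingredients — minusculeness of $\epsilon_{i,1}$, the model $\Gr_{\epsilon_{i,1}}\cong\mathbb P^{v_i-1}$ together with its tautological line bundle, and the passage from $\epsilon_{i,1}$ to its Weyl orbit by $W$-equivariance — are routine given the background recalled above.
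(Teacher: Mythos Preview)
Your proposal is correct and follows essentially the same approach as the paper: both compute $\Eu(T_{\epsilon_{i,r}}\Gr_{\epsilon_{i,1}})=V_{i,r}(x_{i,r})$ from \eqref{equ:wts}, then decompose $\bfN$ into one-dimensional weight spaces, identify the three families of lines with $m=-1$, and repackage the resulting products into $V_{t(h)}$, $V_{\tau i,r}$, and $W_{\tau i}$. The only organizational difference is that the paper first fixes the set $Q_1^+$ of orbit representatives, obtains contributions from $\{h\in Q_1^+:s(h)=i\}$ and $\{h\in Q_1^+:t(h)=\tau i\}$ separately, and then merges them into a single product over $\{h\in Q_1\setminus Q_1^\tau:s(h)=i\}$ via the change of index $h\mapsto\tau h$, whereas you perform this re-indexing in one step by noting that an arrow with $t(h)=\tau i$ is the $\tau$-image of one with source $i$.
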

\begin{proof}
For $1\leq r\leq v_i$, \eqref{equ:wts} shows
$$\Eu(T_{\epsilon_{i,r}} \Gr_{\epsilon_{i,1}})
=\prod_{s\neq r}(x_{i,r}-x_{i,s}) = 
V_{i,r}(x_{i,r}).$$
Let us compute the space 
$z^{\epsilon_{i,r}}\bfN/(z^{\epsilon_{i,r}}\bfN\cap \bfN)$. Recall that $\mathbb{C}^*_\hbar$ acts on $N$ by weight $\frac{\hbar}{2}$.
We can decompose $\bfN$ into one-dimensional $T_V^\tau$-invariant subspaces
\begin{align}
\bfN & = \bigoplus_{h\in Q_1^+} 
\bigoplus_{1\leq a\leq v_{s(h)}}
\bigoplus_{1\leq b\leq v_{t(h)}}
\Hom(\mathbb{C}e_{s(h),a},\mathbb{C}e_{t(h),b})
\label{eq:HomofQ1+}\\
& \oplus 
\bigoplus_{h\in Q_1^\tau}
\bigoplus_{1\leq a<b\leq v_{t(h)}}
\mathbb{C} (e_{t(h),a}\wedge e_{t(h),b})
\label{eq:HomofQ10}\\
& \oplus 
\bigoplus_{j\in Q_0}
\bigoplus_{1\leq a\leq w_j}
\bigoplus_{1\leq b\leq v_j}
\Hom(\mathbb{C}f_{j,a},\mathbb{C}e_{j,b}).
\label{eq:Homofframe}
\end{align}
We can compute the contribution of 
$$z^{\epsilon_{i,r}}\bfN_\calO/(z^{\epsilon_{i,r}}\bfN_\calO\cap \bfN_\calO)$$
for each factor:
\begin{itemize}
    \item 
For $S = \Hom(\mathbb{C}e_{s(h),a},\mathbb{C}e_{t(h),b})$ in \eqref{eq:HomofQ1+}, we have 
$z^{\epsilon_{i,r}}S_\calO=S\otimes z^m\calO$ with 
$$m=
-(\delta_{s(h),i}-\delta_{s(h),\tau i})\delta_{a,r}
+(\delta_{t(h),i}-\delta_{t(h),\tau i})\delta_{b,r}.
$$
We have $m<0$ only when 
$$s(h)=i,a=r\qquad \text{or}\qquad 
t(h)=\tau i,b=r,$$
and in both cases, $m=-1$. 

The total contribution of them is 
\begin{align*}
& \quad \prod_{h\in Q_1^+\atop s(h)=i}
\prod_{b=1}^{v_{t(h)}}
(x_{t(h),b}-x_{i,r}-\tfrac{\hbar}{2})
\prod_{h\in Q_1^+\atop t(h)=\tau i}
\prod_{a=1}^{v_{s(h)}}
(x_{\tau i,r}-x_{s(h),a}-\tfrac{\hbar}{2}). 
\end{align*}
Notice that
\begin{align*}
    \prod_{h\in Q_1^+\atop t(h)=\tau i}
\prod_{a=1}^{v_{s(h)}}
(x_{\tau i,r}-x_{s(h),a}-\tfrac{\hbar}{2})=&\prod_{h\in Q_1^-\atop s(h)=i}
\prod_{a=1}^{v_{t(h)}}
(x_{\tau i,r}-x_{\theta(t(h)),a}-\tfrac{\hbar}{2})\\
=&\prod_{h\in Q_1^-\atop s(h)=i}
\prod_{a=1}^{v_{t(h)}}
(x_{t(h),a}-x_{i,r}-\tfrac{\hbar}{2}).
\end{align*}
Here we used the fact $x_{\tau i,r}=-x_{i,r}$.
Thus, the total contribution is 
\[\prod_{h\in Q_1\setminus Q_1^\tau\atop s(h)=i}\prod_{b=1}^{v_{t(h)}}
(x_{t(h),b}-x_{i,r}-\tfrac{\hbar}{2})
=\prod_{h\in Q_1\setminus Q_1^\tau\atop s(h)=i} (-1)^{v_{t(h)}}
V_{t(h)}(x_{i,r}+\tfrac{\hbar}{2}).\]

    \item 
For $S = \mathbb{C}e_{t(h),a}\wedge e_{t(h),b}$ in \eqref{eq:HomofQ10}, we have 
$z^{\epsilon_{i,r}}S_\calO=S\otimes z^m\calO$ with 
$$m=
(-\delta_{s(h),i}+\delta_{t(h),i})(\delta_{a,r}+\delta_{b,r}).$$
Here we have used the fact that $s(h)=\theta(t(h))$ as $h\in Q_1^\tau$.
Hence, $m<0$ only when 
$$s(h)=i,r\in \{a,b\},$$
and in both cases, $m=-1$. Similar as above, the total contribution is \begin{align*}
&\quad \prod_{h\in Q_1^\tau\atop s(h)=i}\prod_{1\leq a\leq v_{t(h)}\atop a\neq r}(-x_{i,a}-x_{i,r}-\tfrac{\hbar}{2})
 =\prod_{h\in Q_1^\tau\atop s(h)=i}\prod_{1\leq a\leq v_{t(h)}\atop a\neq r}(x_{t(h),a}-x_{i,r}-\tfrac{\hbar}{2})\\
& = \prod_{h\in Q_1^\tau\atop s(h)=i}
(-1)^{v_i-1}
V_{t(h),r}(x_{i,r}+\tfrac{\hbar}{2}). 
\end{align*}
    \item 
For $S=\Hom(\mathbb{C}f_{j,a},\mathbb{C}e_{j,b})$ in 
\eqref{eq:Homofframe}, we have 
$z^{\epsilon_{i,r}}S_\calO=S\otimes z^m\calO$ with 
$$m=
(\delta_{j,i}-\delta_{j,\tau i})\delta_{b,r}.$$
Hence, $m<0$ only if $j=\tau i$ and $b=r$, and in this case $m=-1$. Therefore, the total contribution is 
$$\prod_{1\leq a\leq w_{\tau i}}(-w_{\tau i,a}+x_{\tau i,r}-\tfrac{\hbar}{2})
= W_{\tau i}(x_{\tau i,r}-\tfrac{\hbar}{2}).$$
\end{itemize}
Substituting all of these into \eqref{eq:minu-mono-op}, we get the desired formula. 
\end{proof}

Now let us consider the case of $\lambda=-\epsilon_{i,v_i}\in X_*(T_V^\tau)$, where $i\in Q_0^+$. This is also a minuscule coweight, and the spherical Schubert cell $\Gr_{-\epsilon_{i,v_i}}$ is also isomorphic to the projective space $\mathbb{P}^{v_i-1}$. More precisely, it is identified with the moduli space of $\calO$-modules $L$ such that 
\[\calO\otimes V_i\subset L\subset z^{-1}\calO\times V_i,\quad \dim_\mathbb{C}L/(\calO\otimes V_i) =1.\]
There is a tautological line bundle on $\Gr_{-\epsilon_{i,v_i}}$ whose fiber at $L$ is $L/(\calO\otimes V_i)$. Therefore, the torus weight of the fiber at the torus fixed point $z^{-\epsilon_{i,r}}$ is $x_{i,r}-\hbar$. Let $\mathcal{S}_i$ denote the pullback of this line bundle to $\calR_{-\epsilon_{i,v_i}}$. Similar to the previous proposition, we have the following formula, whose proof is almost the same. 
\begin{prop}\label{prop:fmono2}
Let $f\in \mathbb{Q}[x]$ be a polynomial in one variable. We have 
\begin{align*}
    &f(c_1(\mathcal{S}_i)) \cap [\mathcal{R}_{-\epsilon_{i,v_i}}]
    =\sum_{r=1}^{v_i}f(x_{i,r}-\hbar)
    \prod_{h\in Q_1\setminus Q_1^\tau\atop s(h)=\tau i}(-1)^{v_{t(h)}}V_{t(h)}(x_{\tau i,r}+\tfrac{\hbar}{2})\\
    &\cdot\prod_{h\in Q_1^\tau\atop t(h)=i}(-1)^{v_i-1}V_{i,r}(x_{\tau i,r}+\tfrac{\hbar}{2})\frac{W_{i}(x_{i,r}-\tfrac{\hbar}{2})}{(-1)^{v_i-1}V_{i,r}(x_{i,r})}d_{i,r}^{-1}\in H_{G_W}^*(\pt)\otimes\Diff_\hbar(T_V^\tau).
\end{align*}
\end{prop}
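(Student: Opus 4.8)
The plan is to mirror the proof of \cref{prop:fmono} essentially verbatim, with the roles of the relevant spaces dualized. First I would record that $-\epsilon_{i,v_i}$ is minuscule for $G=G_V^\tau$, so that the monopole formula \eqref{eq:minu-mono-op} has no lower-order terms, and identify the stabilizer $W_\lambda$ (the subgroup $S_{v_i}$ permuting the coordinates indexed by $i\in Q_0^+$ fixing the last one) together with the set $W^\lambda$ of minimal-length coset representatives, which is in bijection with the Weyl orbit $\{-\epsilon_{i,r}\}_{1\le r\le v_i}$. Thus the sum in \eqref{eq:minu-mono-op} becomes a sum over $r=1,\dots,v_i$, with $w$ sending $\lambda$ to $-\epsilon_{i,r}$ and $d_{w\lambda}=d_{i,r}^{-1}$.

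Next I would compute the denominator $\Eu(T_{-\epsilon_{i,r}}\Gr_{-\epsilon_{i,v_i}})$ using \eqref{equ:wts}: the weights of the tangent space at the fixed point $z^{-\epsilon_{i,r}}$ are $x_{i,s}-x_{i,r}$ for $s\neq r$ (one factor each, since $\lambda$ is minuscule), giving $\prod_{s\neq r}(x_{i,s}-x_{i,r})=(-1)^{v_i-1}V_{i,r}(x_{i,r})$. Then I would compute the numerator $\Eu\!\bigl(z^{-\epsilon_{i,r}}\bfN_\calO/(z^{-\epsilon_{i,r}}\bfN_\calO\cap\bfN_\calO)\bigr)$ by decomposing $\bfN=E_V^\tau\oplus L_{W,V}$ into one-dimensional $T_V^\tau$-weight spaces exactly as in \eqref{eq:HomofQ1+}--\eqref{eq:Homofframe}. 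For each summand $S$ of weight $\beta$ one has $z^{-\epsilon_{i,r}}S_\calO=S\otimes z^m\calO$ with $m=-\langle\epsilon_{i,r},\beta\rangle=\langle -\epsilon_{i,r},\beta\rangle$, so the sign of $m$ flips relative to \cref{prop:fmono}: now the nontrivial contributions come from summands where $t(h)=i$ (for the arrow part), $t(h)=i$ (for the $Q_1^\tau$ alternating part), and $j=i$ in $L_{W,V}$ (for the framing part). Running through the three cases and using $x_{\tau i,r}=-x_{i,r}$ to consolidate $Q_1^+$ and $Q_1^-$ contributions as in the proof of \cref{prop:fmono}, I expect the arrow contribution to be $\prod_{h\in Q_1\setminus Q_1^\tau,\,s(h)=\tau i}(-1)^{v_{t(h)}}V_{t(h)}(x_{\tau i,r}+\tfrac{\hbar}{2})$, the $Q_1^\tau$ contribution to be $\prod_{h\in Q_1^\tau,\,t(h)=i}(-1)^{v_i-1}V_{i,r}(x_{\tau i,r}+\tfrac{\hbar}{2})$, and the framing contribution to be $W_i(x_{i,r}-\tfrac{\hbar}{2})$.

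Finally I would observe that the tautological line bundle $\mathcal{S}_i$ has fiber weight $x_{i,r}-\hbar$ at $z^{-\epsilon_{i,r}}$ (as stated just before the proposition), so $f(c_1(\mathcal{S}_i))$ contributes the scalar $f(x_{i,r}-\hbar)$ in the $r$-th term; assembling the numerator over $\Eu(T_{-\epsilon_{i,r}}\Gr_{-\epsilon_{i,v_i}})=(-1)^{v_i-1}V_{i,r}(x_{i,r})$ and multiplying by $d_{i,r}^{-1}$ gives exactly the claimed formula, including the stated $(-1)^{v_i-1}V_{i,r}(x_{i,r})$ in the denominator. The only genuinely delicate point, as in \cref{prop:fmono}, is bookkeeping the signs and the $Q_1^+$/$Q_1^-$ identification correctly when transposing alternating maps under $\tau$; everything else is a routine transcription of the earlier argument with $\lambda$ replaced by $-\epsilon_{i,v_i}$, which is why it suffices to say ``whose proof is almost the same.''
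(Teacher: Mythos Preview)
Your proposal is correct and matches the paper's approach exactly: the paper itself simply states that the proof ``is almost the same'' as that of \cref{prop:fmono}, and your outline carries out precisely this transcription with $\lambda=-\epsilon_{i,v_i}$ in place of $\epsilon_{i,1}$, correctly identifying the sign flips in the $m$-computations, the resulting change of conditions ($t(h)=i$, $s(h)=\tau i$, $j=i$) for nontrivial contributions, and the tangent-space Euler class $(-1)^{v_i-1}V_{i,r}(x_{i,r})$.
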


\section{Shifted twisted Yangians}\label{sec:Yangian}

Let $C = (c_{ij})_{i,j \in I}$ be a symmetric generalized Cartan matrix, and let $\mathfrak{g}$ be the associated Kac--Moody Lie algebra with Cartan subalgebra $\mathfrak{h}$. 
We further make the following assumptions: 
\begin{itemize}
    \item there exists an involution $\tau: I \to I$ such that $c_{ij} = c_{\tau i, \tau j}$ for all $i,j \in I$;
    \item 
    $c_{ij} = c_{ji} \in \{0,-1\}$, for any $i \neq j$;
    \item $i \neq \tau i$ for all $i \in I$.
\end{itemize}

Let $\{\alpha_i\in \mathfrak{h}^* \mid i \in I\}$ (resp. $\{\alpha_i^\vee\in \mathfrak{h} \mid i \in I\}$) be the set of simple roots (coroots)
associated with $\mathfrak{g}$. Let $\Lambda_i\in \mathfrak{h}$ be the fundamental coweights satisfying \[
  \langle  \Lambda_i,\alpha_j  \rangle = \delta_{ij}, \qquad i,j \in I,
\]
where $\langle-,-\rangle$ is the natural pairing between $\mathfrak{h}$ and $\mathfrak{h}^*$.
Let $P^\vee\subset \mathfrak{h}$ be the lattice generated by $\alpha_i^\vee$ and $\Lambda_i$.
The involution $\tau$ on $I$ naturally extends to $P^\vee$ by setting $\tau(\Lambda_i) = \Lambda_{\tau i}$ and $\tau(\alpha_i^\vee)=\alpha_{\tau i}^\vee$.

The definition of the shifted twisted Yangian depends on a choice of a $\tau$-invariant coweight 
$\mu \in P^\vee$, i.e.\ $\mu = \tau(\mu)$. 
In order to write down explicit generators, we further fix a family of parameters 
$\zeta = (\zeta_i)_{i \in I} \in \bigl(\mathbb{C}(\hbar)^\times\bigr)^I$ satisfying 
\begin{equation}\label{eq:zeta}
  \zeta_i = (-1)^{\langle \alpha_i, \mu \rangle}\,\zeta_{\tau(i)} 
  \qquad \in \mathbb{C}(\hbar)^\times.
\end{equation}
The condition \eqref{eq:zeta} is imposed to ensure that the convention 
\eqref{eq:vanishing} is compatible with the second equation in
\eqref{eq:hcomm}. 
In fact, the resulting algebra structure is independent of the particular choice of~$\zeta$, see Remark \ref{rem:parameter} below.

\begin{definition}
\label{shiftedTY}
The shifted twisted Yangian $\mathbf{Y}^{\tau}_{\mu}(\mathfrak{g})$ is the $\mathbb{C}(\hbar)$-algebra generated by 
$h_{i,r}$ and $b_{i,s}$ for $i\in I, r\geq -\langle \alpha_i,\mu\rangle-1$ and $s\geq 0$, subject to the following relations:
\begin{align}
\label{eq:hcomm}
    & {\left[h_{i, r}, h_{j, s}\right]=0, \qquad 
    h_{i,s} = (-1)^{s+1} h_{\tau i,s}}, 
        % &(r,s\in \mathbb{Z})
        \\
\label{eq:hbcomm}
    & {\left[h_{i, r+2}, b_{j, s}\right]-\left[h_{i, r}, b_{j, s+2}\right]} 
    =\frac{c_{i j}-c_{\tau i, j}}{2} \hbar\left\{h_{i, r+1}, b_{j, s}\right\}
    \\\notag  & \qquad 
    +\frac{c_{i j}+c_{\tau i, j}}{2} \hbar\left\{h_{i, r}, b_{j, s+1}\right\}
    +\frac{c_{i j} c_{\tau i, j}}{4} \hbar^2\left[h_{i, r}, b_{j, s}\right], 
        % &(r\in \mathbb{Z},s\geq0)
        \\
\label{eq:bcomm}
    & {\left[b_{i, r+1}, b_{j, s}\right]-\left[b_{i, r}, b_{j, s+1}\right]=\frac{c_{i j}}{2}\hbar \left\{b_{i, r}, b_{j, s}\right\}-2 \delta_{\tau i, j}(-1)^r h_{j, r+s+1}},
        % &(r,s\geq 0)
\end{align}
and the Serre-type relations  
\begin{itemize}
    \item when $c_{i j}=0$, we have %commutative Serre relations
\begin{equation}\label{eq:commSerre}
    \left[b_{i, r}, b_{j, s}\right]=\delta_{\tau i, j}(-1)^r h_{j, r+s};
\end{equation}
\item when $c_{i j}=-1, j \neq \tau i \neq i$, we have %usual Serre relations
\begin{equation}\label{eq:usualSerre}
    \operatorname{Sym}_{k_1, k_2}\left[b_{i, k_1},\left[b_{i, k_2}, b_{j, r}\right]\right]=0;
\end{equation}
\item when $c_{i, \tau i}=-1$, we have %$\imath$Serre relations
\begin{equation}\label{eq:iSerre}
    \operatorname{Sym}_{k_1, k_2}\left[b_{i, k_1},\left[b_{i, k_2}, b_{\tau i, r}\right]\right]=\frac{4}{3} \operatorname{Sym}_{k_1, k_2}(-1)^{k_1} \sum_{p=0}^{\infty} 3^{-p}\left[b_{i, k_2+p}, h_{\tau i, k_1+r-p}\right].
\end{equation}
\end{itemize}
Here we have adopted the convention that 
\begin{equation}\label{eq:vanishing}
\mbox{$h_{i,r}=0$ if $r<-\langle \alpha_i,\mu\rangle-1$ and $h_{i,r}=\zeta_i$ if $r=-\langle \alpha_i,\mu\rangle-1$}
\end{equation}
\end{definition}
We note that the right-hand side of \cref{eq:iSerre} is a finite sum by the convention \cref{eq:vanishing}.

\begin{remark}
\label{rem:finiteSerre}
When \( \mu = 0 \) and \( \zeta_i = \hbar^{-1} \) for all \( i \in I \), the relations \cref{eq:hcomm}--\cref{eq:usualSerre} coincide with \cite[(3.1)--(3.7)]{LZ24}. Since we assume that the involution \( \tau \) on the set of vertices has no fixed points, the Serre-type relation \cite[(3.8)]{LZ24} does not appear in our setting. Moreover, our \cref{eq:iSerre} provides a generalization of \cite[(3.9)]{LZ24} to the context of shifted twisted Yangians. In particular, when \( \mu = 0 \), our \cref{eq:iSerre} recovers \cite[(3.9)]{LZ24}. This relation is obtained through direct computations on the Coulomb branch side; see \cref{iSerre(z)} for further details.
\end{remark}

\begin{remark}\label{rem:parameter} 
For any $\zeta\in \mathbb{C}(\hbar)^\times$ and $i\in I$, it is not hard to check that the following transformation 
\begin{align*}
h_{j,r} & \mapsto \zeta h_{j,r}\quad j\in \{i,\tau i\},
&\text{with other $h_{j,r}$ fixed}, 
\\
b_{i,s} & \mapsto \zeta b_{i,s}
&\text{with other $b_{j,s}$ fixed},
\end{align*}
preserves all the relations except \eqref{eq:vanishing}. 
In particular, up to an algebra automorphism, the shifted twisted Yangian does not depend on the choice of parameters. 
\end{remark}

\begin{remark}
The notion of shifted twisted Yangians was also introduced in \cite{TT24,LPTTW25}. Our construction and theirs arise from two distinct shifting procedures, yet the resulting algebras are expected to be isomorphic. 
For ordinary non-twisted Yangians, it is known that these two shifting methods give rise to isomorphic algebras (see \cite{KWWY14} and \cite[Remark B.3]{BFN19}), and the same phenomenon is conjectured to hold in the twisted setting. 
\end{remark}

We define the following generating functions
\begin{align}
\label{generatingfunction}
h_i(z) & = \hbar\sum_{r\in \mathbb{Z}}h_{i,r}z^{-r-1},&
h_i^\circ(z) & = \hbar\sum_{r\geq 0}h_{i,r}z^{-r-1},&
b_i(z) & = \hbar\sum_{s\geq 0}b_{i,s}z^{-s-1}.
\end{align}
Following \cite[\S 3.3]{LZ24}, we can rewrite the relations \eqref{eq:hcomm}, \eqref{eq:hbcomm} and \eqref{eq:bcomm} as 
\begin{align}
\label{eq:hcomm(z)}&
{\left[h_i(u), h_j(v)\right]=0, \qquad h_{\tau i}(u)=h_i(-u),}\\
\label{eq:hbcomm(z)}
&
\left(u^2-v^2\right)\left[h_i(u), b_j(v)\right]=\frac{c_{i j}-c_{\tau i, j}}{2} \hbar u\left\{h_i(u), b_j(v)\right\}\\
\notag& \qquad +\frac{c_{i j}+c_{\tau i, j}}{2} \hbar v\left\{h_i(u), b_j(v)\right\} +\frac{c_{i j} c_{\tau i, j}}{4} \hbar^2\left[h_i(u), b_j(v)\right]
\\\notag&\qquad 
-\hbar\left[h_i(u), b_{j, 1}\right] 
-\hbar v\left[h_i(u), b_{j, 0}\right]-\frac{c_{i j}+c_{\tau i, j}}{2} \hbar^2\left\{h_i(u), b_{j, 0}\right\},\\
\label{eq:bcomm(z)}&
(u-v)\left[b_i(u), b_j(v)\right]=\frac{c_{i j}}{2} \hbar\left\{b_i(u), b_j(v)\right\}+\hbar\left(\left[b_{i, 0}, b_j(v)\right]-\left[b_i(u), b_{j, 0}\right]\right) \\
\notag&\qquad -\delta_{\tau i, j} \hbar\left(\frac{2 u}{u+v} h_i^{\circ}(u)+\frac{2 v}{u+v} h_j^{\circ}(v)\right).
\end{align}
We can also write two of Serre relations \eqref{eq:commSerre} and \eqref{eq:usualSerre} as 
\begin{align}
\label{eq:commSerre(z)}&
(u+v)\left[b_i(u), b_j(v)\right]=\delta_{\tau i, j}\hbar\left(h_j^\circ(v)-h_i^\circ(u)\right)&(c_{ij}=0),\\
\label{eq:usualSerre(z)}&
\operatorname{Sym}_{u_1, u_2}\left[b_i(u_1),\left[b_i(u_2), b_j(v)\right]\right]=0, &
(c_{i j}=-1, j \neq \tau i \neq i).
\end{align}

\begin{remark}
\label{rem:iSerre}
The relations \cref{eq:hcomm(z)}--\cref{eq:usualSerre(z)} coincide with \cite[(3.20)--(3.25)]{LZ24}.
However, the analog of relation \cite[(3.26)]{LZ24}, which will be a generation form of \cref{eq:iSerre}, does \textbf{not} 
seem to exist for the shifted twisted Yangian $\mathbf{Y}^{\tau}_{\mu}(\mathfrak{g})$.
To address this issue, we establish an analogue of \cite[Section 3.4]{LZ24} for the relation \cref{eq:iSerre} on the Coulomb branch side; see \cref{sec:gftoiSerre} for further details.
\end{remark}

\section{Main Results}
In this section, we first produce a GKLO type representation of the shifted twisted Yangian via difference operators. Then we prove our main result showing that for a quiver with involution, there is an algebra homomorphism from the shifted twisted Yangian to the quantized Coulomb branch algebra associated to the corresponding involution-fixed part of the quiver gauge theory.

Let $(Q,\tau)$ be a quiver with involution as in Section \ref{sec:quiverinv}, then there is a symmetric Cartan matrix with involution $(C,\tau)$ by forgetting the orientation of $Q$, i.e.
$$I=Q_0,\qquad \tau=\tau|_{Q_0},\qquad 
c_{ij}=-
\texttt{\#}\left\{h\in Q_1:\{s(h),t(h)\}=\{i,j\}\right\}\quad \textit{ for } i\neq j.
$$
Notice that the assumptions \cref{equ:simplylaced} and \cref{eq:nofixedpoint} on the quiver are equivalent to the assumptions on the Cartan matrix in \cref{sec:Yangian}.
Let $\mathfrak{g}_Q$ denote the corresponding Kac--Moody algebra, with $\Lambda_i$ (resp. $\alpha_i^\vee$) being the fundamental coweights (resp. simple coroots). Fix dimension vectors $(v_i)_{i\in Q_0}$ and $(w_i)_{i\in Q_0}$, such that $v_i=v_{\tau i}$. Then we have the quantized Coulomb branch algebra $$\mathcal{A}_\hbar := 
H^{(G_{\calO}\times F_{\calO})\rtimes\mathbb{C}_\hbar^\times}_*(\calR_{G,\bfN})$$ 
associated to the following data as in Section \ref{sec:mono}:
$$G:=G_V^{\tau} \simeq \prod_{i\in Q_0^+}GL(V_i),\qquad \bfN = E_V^\tau \oplus L_{W,V},\qquad 
F=G_W. $$
By \cref{eq:embeddiff}, we have the embedding 
\begin{equation}\label{equ:embedcou}
    z^*\circ(\iota_*)^{-1}:\mathcal{A}_\hbar\hookrightarrow \Diff_\hbar(T_V^\tau)\otimes H_{G_W}^*(\pt).
\end{equation}

Let 
$$\mu=\sum_{j\in Q_0} (w_j+w_{\tau j})\Lambda_j-\sum_{h\in Q_1^\tau}(\Lambda_{s(h)}+\Lambda_{t(h)})-\sum_{j\in Q_0}v_j\alpha_j^\vee\in P^\vee,$$
which is clearly $\tau$-invariant.
For $i\in Q_0$, we define the following elements in $\Diff_\hbar(T_V^\tau)\otimes H_{G_W}^*(\pt)$:
\begin{align}\label{equ:Biz}
    B_i(u) =&\sum_{r=1}^{v_i}\frac{1}{-u-x_{i,r}-\tfrac{\hbar}{2}} \prod_{h\in Q_1^\tau\atop s(h)=i}V_{\tau i,r}(x_{i,r}+\tfrac{\hbar}{2})\prod_{h\notin Q_1^\tau\atop s(h)=i}V_{t(h)}(x_{i,r}+\tfrac{\hbar}{2})\frac{W_{\tau i}(-x_{i,r}-\tfrac{\hbar}{2})}{V_{i,r}(x_{i,r})}d_{i,r}\notag\\ =&\sum_{r=1}^{v_i}\frac{1}{-u-x_{i,r}-\tfrac{\hbar}{2}} \frac{\prod_{h\in Q_1\atop s(h)=i}V_{t(h)}(x_{i,r}+\tfrac{\hbar}{2})}{\prod_{h\in Q_1^\tau\atop s(h)=i}(2x_{i,r}+\tfrac{\hbar}{2})}\frac{W_{\tau i}(-x_{i,r}-\tfrac{\hbar}{2})}{V_{i,r}(x_{i,r})}d_{i,r},
\end{align}
and
\begin{equation}\label{equ:Hu}
    H_i(u) := (-1)^{v_i-1}(2u)^{c_{i,\tau i}}(-1)^{\delta_{i\to \tau(i)}}\frac{W_i(-u)W_{\tau i}(u)}{V_i(-u+\frac{\hbar}{2})V_i(-u-\tfrac{\hbar}{2})}\prod_{h\in Q_1\atop s(h)=i}V_{t(h)}(-u)\prod_{h\in Q_1\atop s(h)=\tau i}V_{t(h)}(u),
\end{equation}
where $\delta_{i\to \tau(i)}$ is $1$ if there is an $h\in Q_1$ such that $s(h)=i$ and $t(h)=\tau(i)$, and is $0$ otherwise. 
Expanding $B_i(u)$ and $H_i(u)$ into a Laurent series in $u^{-1}$, we let $B_{i,r}$ (resp. $H_{i,r}$) denote the coefficient of $u^{-r-1}$ of $B_i(u)$ (resp. $H_i(u)$). 
\begin{lemma}\label{lem:Hiz}
    The following holds:
    \begin{enumerate}
        \item $H_{\tau i}(u) = H_i(-u)$.
        \item $H_{i,r}\in H_{G_V^\tau\times G_W\times \mathbb{C}^*_\hbar}^*(\pt)$
        \item $H_{i,r}=0$ if $r<-\langle \alpha_i,\mu\rangle -1$, and $H_{i,-\langle \alpha_i,\mu\rangle -1}=2^{c_{i,\tau i}}(-1)^{v_i-1+\delta_{i\rightarrow \tau i}+w_i+\sum_{h\in Q_1, s(h)=i}v_{t(h)}}$.
    \end{enumerate}
\end{lemma}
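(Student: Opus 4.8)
The plan is to prove all three parts by direct manipulation of the explicit rational function $H_i(u)$ of \eqref{equ:Hu}, using repeatedly the identities $x_{\tau j,k}=-x_{j,k}$ and $v_{\tau j}=v_j$ together with two structural facts about $(Q,\tau)$: since $c_{i,\tau i}\in\{0,-1\}$ there is at most one edge joining $i$ and $\tau i$, it necessarily lies in $Q_1^\tau$, so that $\delta_{i\to\tau i}+\delta_{\tau i\to i}=-c_{i,\tau i}$; and the bijection $h\mapsto\tau h$ gives $\sum_{t(h)=i}v_{s(h)}=\sum_{s(h)=\tau i}v_{t(h)}$. For part (1) I would substitute $u\mapsto -u$ in \eqref{equ:Hu}. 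From $V_{\tau i}(z)=(-1)^{v_i}V_i(-z)$ one has $V_i(-u+\tfrac{\hbar}{2})V_i(-u-\tfrac{\hbar}{2})=V_{\tau i}(-u+\tfrac{\hbar}{2})V_{\tau i}(-u-\tfrac{\hbar}{2})$, so after the substitution the denominator becomes the denominator of $H_{\tau i}(u)$; the numerator factors $W_i(-u)W_{\tau i}(u)$ and $\prod_{s(h)=i}V_{t(h)}(-u)\cdot\prod_{s(h)=\tau i}V_{t(h)}(u)$ pass to their $\tau i$-analogues; and $(2(-u))^{c_{i,\tau i}}=(-1)^{c_{i,\tau i}}(2u)^{c_{i,\tau i}}$. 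Comparing with $H_{\tau i}(u)$, which is \eqref{equ:Hu} with $i$ replaced by $\tau i$ (using $c_{\tau i,i}=c_{i,\tau i}$), the only leftover scalar is $(-1)^{c_{i,\tau i}}(-1)^{\delta_{i\to\tau i}-\delta_{\tau i\to i}}$, which equals $1$ by the relation $\delta_{i\to\tau i}+\delta_{\tau i\to i}=-c_{i,\tau i}$.

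For part (2) I would write $H_i(u)=P(u)/Q(u)$ with $Q(u)=V_i(-u+\tfrac{\hbar}{2})V_i(-u-\tfrac{\hbar}{2})$ and $P(u)$ the product of the remaining polynomial factors; both have coefficients in $\mathbb{Q}[x,w,\hbar]$, and $Q$ is monic in $u$ because its leading coefficient is $(-1)^{2v_i}=1$. Expanding $Q(u)^{-1}$ at $u=\infty$ by a geometric series then gives coefficients in $\mathbb{Q}[x,\hbar]$, so each coefficient $H_{i,r}$ of $H_i(u)=\sum_r H_{i,r}u^{-r-1}$ lies in $\mathbb{Q}[x,w,\hbar]$. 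Invariance under each $S_{v_j}$ and each $S_{w_j}$ is immediate from the definitions of the $V_j$ and $W_j$ once any factor $V_{t(h)}$ with $t(h)\in Q_0^-$ is rewritten via $x_{\tau i,k}=-x_{i,k}$, so that $H_{i,r}\in H^*_{G_V^\tau\times G_W\times\mathbb{C}^\times_\hbar}(\pt)$.

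For part (3) I would first compute $\langle\alpha_i,\mu\rangle$ using $\langle\alpha_i,\Lambda_j\rangle=\delta_{ij}$ and $\langle\alpha_i,\alpha_j^\vee\rangle=c_{ij}$:
\[
\langle\alpha_i,\mu\rangle
=(w_i+w_{\tau i})-\sum_{h\in Q_1^\tau}(\delta_{s(h),i}+\delta_{t(h),i})-\sum_{j\in Q_0}v_jc_{ij}
=c_{i,\tau i}+w_i+w_{\tau i}-2v_i+\sum_{s(h)=i}v_{t(h)}+\sum_{s(h)=\tau i}v_{t(h)},
\]
the second equality using $\sum_j v_jc_{ij}=2v_i-\sum_{s(h)=i}v_{t(h)}-\sum_{t(h)=i}v_{s(h)}$, the reindexing $\sum_{t(h)=i}v_{s(h)}=\sum_{s(h)=\tau i}v_{t(h)}$, and $\sum_{h\in Q_1^\tau}(\delta_{s(h),i}+\delta_{t(h),i})=-c_{i,\tau i}$. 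This is precisely the degree in $u$ of $H_i(u)$, and it is attained since every factor has nonzero leading $u$-coefficient; hence $H_{i,r}=0$ whenever $-r-1>\langle\alpha_i,\mu\rangle$, i.e.\ $r<-\langle\alpha_i,\mu\rangle-1$, and $H_{i,-\langle\alpha_i,\mu\rangle-1}$ equals the product of the leading $u$-coefficients of the factors, namely $2^{c_{i,\tau i}}$ (from $(2u)^{c_{i,\tau i}}$), $(-1)^{w_i}$ (from $W_i(-u)$), $(-1)^{v_i}$ twice (from $Q$), and $(-1)^{\sum_{h\in Q_1,\,s(h)=i}v_{t(h)}}$ (from $\prod V_{t(h)}(-u)$), together with the prefactors $(-1)^{v_i-1}$ and $(-1)^{\delta_{i\to\tau i}}$; collecting the signs yields $2^{c_{i,\tau i}}(-1)^{v_i-1+\delta_{i\to\tau i}+w_i+\sum_{h\in Q_1,\,s(h)=i}v_{t(h)}}$.

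All of this is elementary; the main obstacle is the bookkeeping, specifically verifying that the $u$-degree of $H_i(u)$ coincides with $\langle\alpha_i,\mu\rangle$ — which rests on the two combinatorial identities above forced by the structure of $(Q,\tau)$ — and keeping track of the signs carefully in parts (1) and (3).
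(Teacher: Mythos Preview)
Your proof is correct and follows the same approach as the paper's own proof: a direct computation from the explicit formula \eqref{equ:Hu}, using $x_{\tau i,k}=-x_{i,k}$ for part (1), the symmetry of the $V_j$ and $W_j$ for part (2), and a degree count identifying $\deg_u H_i(u)=\langle\alpha_i,\mu\rangle$ for part (3). The paper's version is a terse three-sentence sketch, whereas you have carefully worked out the sign bookkeeping (notably the identity $\delta_{i\to\tau i}+\delta_{\tau i\to i}=-c_{i,\tau i}$ forced by $(Q,\tau)$ being simply-laced with $\tau$-fixed edges lying in $Q_1^\tau$, and the reindexing $\sum_{t(h)=i}v_{s(h)}=\sum_{s(h)=\tau i}v_{t(h)}$), so your writeup is in fact more complete.
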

\begin{proof}
    The first one follows from the fact $x_{i,j}=-x_{\tau i, j}$, while the second one follows from the fact that the coefficients of $V_i(u)$ (resp. $W_i(u)$) are in $H_{G_V^\tau}^*(\pt)$ (resp. $H_{G_W}^*(\pt)$). By definition, the highest degree of $u$ in $H_i(u)$ is
    \[c_{i,\tau i}+w_i+w_{\tau i}-2v_i+\sum_{h\in Q_1\atop s(h)=i}v_{t(h)}+\sum_{h\in Q_1\atop s(h)=\tau i}v_{t(h)}=\langle \alpha_i,\mu\rangle.\]
    This concludes the lemma.
\end{proof}

From now on, we fix the parameters 
\[\zeta_i:=\hbar^{-1}2^{c_{i,\tau i}}(-1)^{v_i-1+\delta_{i\rightarrow \tau i}+w_i+\sum_{h\in Q_1, s(h)=i}v_{t(h)}},\]
and consider the associated twisted shifted Yangian $\mathbf{Y}^{\tau}_{\mu}(\mathfrak{g}_Q)$. Then we have the following GKLO type representation for the shifted twisted Yangian, generalizing the results from \cite{GKLO05,KWWY14,BFN19}.
\begin{theorem}\label{thm:gklo}
There exists a unique $H_{G_W}^*(\pt)(\hbar)$-algebra homomorphism 
\begin{equation}
\label{gklo}
    \psi\colon\mathbf{Y}^{\tau}_{\mu}(\mathfrak{g}_Q)
\otimes H_{G_W}^*(\pt)
\longrightarrow 
\Diff_\hbar(T_V^\tau)\otimes H_{G_W}^*(\pt)
\end{equation}
sending $h_i(z)$ to $H_i(z)$ and $b_i(z)$ to $B_i(z)$. 
\end{theorem}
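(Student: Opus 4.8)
The plan is to verify that the assignments $h_i(z) \mapsto H_i(z)$ and $b_i(z) \mapsto B_i(z)$ respect every defining relation of $\mathbf{Y}^\tau_\mu(\mathfrak{g}_Q)$ listed in \cref{shiftedTY}, since freeness of the algebra on its generators makes uniqueness automatic once existence is established. The natural strategy is to reduce all relations to their generating-function forms \eqref{eq:hcomm(z)}--\eqref{eq:iSerre} and check these as identities in $\Diff_\hbar(T_V^\tau)\otimes H_{G_W}^*(\pt)$. The commutation relation $[H_i(u),H_j(v)]=0$ is immediate because all the $H_i(u)$ lie in the commutative subalgebra $H^*_{G_V^\tau\times G_W\times \mathbb{C}^*_\hbar}(\pt)(u^{-1})$ by \cref{lem:Hiz}(2), and $H_{\tau i}(u)=H_i(-u)$ is exactly \cref{lem:Hiz}(1). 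The convention \eqref{eq:vanishing} is matched precisely by \cref{lem:Hiz}(3) together with our choice of $\zeta_i$.

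The bulk of the work is the mixed relations. For \eqref{eq:hbcomm(z)}, one computes $H_i(u)B_j(v)$ by moving the difference operators $d_{j,r}$ in $B_j(v)$ past the rational function $H_i(u)$; the shift $d_{j,r}x_{i,k}=(x_{i,k}+(\delta_{ij}-\delta_{i,\tau j})\delta_{kr}\hbar)d_{j,r}$ produces exactly a ratio of shifted factors of the numerator/denominator data of $H_i$, and the claim is that this ratio equals the rational function in $u,v$ dictated by the right-hand side of \eqref{eq:hbcomm(z)} — this is the ``$q$-character''-style bookkeeping familiar from \cite{GKLO05,KWWY14,BFN19}, now with the extra sign bundles coming from the $\tau$-twist and the $\wedge^2$ summands. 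The relation \eqref{eq:bcomm(z)} (and its $c_{ij}=0$ specialization \eqref{eq:commSerre(z)}) is the delicate one: here $B_i(u)B_j(v)$ and $B_j(v)B_i(u)$ both contain two difference operators, and when $j=\tau i$ the ``diagonal'' terms $r=s$ collide to produce a pole at $u+v=0$ whose residue must be matched against $-\delta_{\tau i,j}\hbar(\tfrac{2u}{u+v}h_i^\circ(u)+\tfrac{2v}{u+v}h_j^\circ(v))$. This is precisely the step where the normalization of $B_i(u)$ with the factor $\tfrac{1}{-u-x_{i,r}-\hbar/2}$ and the shift $d_{i,r}d_{\tau i,r}^{-1}$-type interactions have been engineered; I would extract the residue at $x_{i,r}=-u-\hbar/2$, use $x_{\tau i,r}=-x_{i,r}$, and identify the result with $H_i(u)$ via \eqref{equ:Hu}. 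The usual Serre relation \eqref{eq:usualSerre(z)} for $c_{ij}=-1$, $j\neq\tau i$ should follow from the standard argument that the $B_i(u)$ of distinct non-$\tau$-paired nodes interact through a single difference operator each and the symmetrization kills the resulting simple rational function, as in \cite{BFN19}.

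The hard part will be \cref{eq:iSerre}, the $\imath$Serre relation for $c_{i,\tau i}=-1$, because — as flagged in \cref{rem:iSerre} — there is no clean generating-function identity to match against; the relation is genuinely ``new'' at shifted level. My approach would be to prove the generating-function form of the $\imath$Serre relation directly on the Coulomb side in an auxiliary section (the excerpt refers to this as \cref{sec:gftoiSerre} / \cref{iSerre(z)}), establishing an identity for $\operatorname{Sym}_{u_1,u_2}[B_i(u_1),[B_i(u_2),B_{\tau i}(v)]]$ in terms of $B_i$ and $H_{\tau i}$, and then extract the coefficient form \cref{eq:iSerre} by expanding in $u_1^{-1},u_2^{-1},v^{-1}$ and using the geometric series $\sum_p 3^{-p}$ that arises from the factor $(2u)^{c_{i,\tau i}}=(2u)^{-1}$ in \eqref{equ:Hu}. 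This computation is where the ``arrow $i\to\tau i$'' correction $\delta_{i\to\tau i}$ and the $\wedge^2 V$ summand in $E_V^\tau$ genuinely enter, and getting the constant $4/3$ and the powers of $3$ to come out correctly is the main obstacle; I expect it to require a careful residue analysis of the triple product together with the $\mathbb{C}^*_\hbar$-weight conventions. Once all relations are verified, existence of $\psi$ follows, and uniqueness is forced since $h_i(z),b_i(z)$ generate.
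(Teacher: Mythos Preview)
Your plan for relations \eqref{eq:hcomm}--\eqref{eq:usualSerre} matches the paper's approach and is essentially correct: case-by-case verification via the generating-function forms, with the $j=\tau i$ case of \eqref{eq:bcomm(z)} handled by a partial-fraction/residue computation (the paper packages this as \cref{lem:partialfrac} and \cref{cor:cequal0H,cor:uH}).

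The genuine gap is in your treatment of the $\imath$Serre relation \eqref{eq:iSerre}. You propose to establish a full three-variable generating-function identity for $\operatorname{Sym}_{u_1,u_2}[B_i(u_1),[B_i(u_2),B_{\tau i}(v)]]$ directly on the Coulomb side and then read off all coefficients. But as \cref{rem:iSerre} already flags, the analogue of \cite[(3.26)]{LZ24} does \emph{not} seem to exist in the shifted setting, so this direct route is not available. The paper circumvents this as follows: it introduces auxiliary elements $\tilde{H}_{i,n}$ in the difference algebra, built from Bernoulli polynomials, satisfying $[\tilde{H}_{i,n},B_{i,s}]=B_{i,n+s}$ and $[\tilde{H}_{i,n},B_{\tau i,s}]=-(-1)^nB_{\tau i,n+s}$. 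Bracketing with these, together with an induction using only the already-verified relations \eqref{eq:hcomm}--\eqref{eq:bcomm}, reduces the general case $(k_1,k_2\mid r)$ of \eqref{eq:iSerre} to the single base case $k_1=k_2=r=0$ (\cref{prop:finite=>affine}). Only that base case is then checked, via the \emph{one}-variable identity
\[
\hbar^{2}[b_{i,0},[b_{i,0},b_{\tau i}(v)]]=\bigl(4v\hbar[b_i(3v),h_{\tau i}(v)]\bigr)^\circ,
\]
which is amenable to a direct residue computation using \cref{lem:partialfrac}. Incidentally, the powers $3^{-p}$ in \eqref{eq:iSerre} arise here from evaluating $b_i$ at $3v$ and expanding, not from the factor $(2u)^{-1}$ in $H_i$ as you suggest. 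Without the Bernoulli-polynomial reduction your proposal for \eqref{eq:iSerre} is not a proof but a hope.
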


To prove the theorem, we need to check that the operators $B_i(z)$ and $H_i(z)$ satisfy all the relations in $\mathbf{Y}^{\tau}_{\mu}(\mathfrak{g}_Q)
\otimes H_{G_W}^*(\pt)$. We show this in \cref{sec:relations} and \cref{sec:iserre} below. 
Many of the relations can be verified using techniques similar to those in \cite[Appendix B]{BFN19}, and we will only sketch the arguments for those cases. In fact, the verification can be slightly simplified using a rational function identity; see Lemma~\ref{lem:partialfrac}.   

As discussed in \cref{rem:iSerre}, the relation \eqref{eq:iSerre} presents additional subtleties. We revisit and reformulate it carefully in \cref{sec:gftoiSerre}. Unlike the other relations, \eqref{eq:iSerre} involves a complicated summation, and the generating function formulation given in \cite[(3.26)]{LZ24} does not directly extend to the shifted setting. To address this difficulty, we establish an analogue of \cite[Section 3.4]{LZ24} on the Coulomb branch side. This approach enables us to verify the relation \eqref{eq:iSerre} only for the special case \( k_1 = k_2 = 0 \), where a generating function formulation is available; see \cref{equ:iSerreitaui}.

Recall the embedding \cref{equ:embedcou}. As an immediate consequence of the above theorem, we get our main result. 
\begin{theorem}\label{thm:main}
   The homomorphism $\psi$ in \cref{thm:gklo} factors through a unique $H_{G_W}^*(\pt)[\hbar,\hbar^{-1}]$-algebra homomorphism $\Psi$:
   \[\xymatrix{\mathbf{Y}^{\tau}_{\mu}(\mathfrak{g}_Q)\otimes H_{G_W}^*(\pt) \ar[rr]^-\Psi \ar[rd]^-\psi & &\mathcal{A}_\hbar[\hbar^{-1}] \ar@{^{(}->}[ld]^-{z^*\circ(\iota_*)^{-1}}\\
   &\Diff_\hbar(T_V^\tau)\otimes H_{G_W}^*(\pt),}\] 
    such that for any $i\in Q_0$, $\Psi(h_{i,r})=\hbar^{-1}H_{i,r}$;
    for $i\in Q_0^+$,
    \[\Psi(b_{i,r})= \hbar^{-1}(-1)^{1+\sum_{h\notin Q_1^\tau\atop s(h)=i} v_{t(h)}+\sum_{h\in Q_1^\tau \atop s(h)=i}(v_i-1)} (-c_1(\mathcal{Q}_i)-\tfrac{\hbar}{2})^r\cap [R_{\epsilon_{i,1}}], \]
    and for $i\in Q_0^-$,
    \[\Psi(b_{i,r})= \hbar^{-1}(-1)^{1+\sum_{h\notin Q_1^\tau\atop s(h)=i} v_{t(h)}+\sum_{h\in Q_1^\tau \atop s(h)=i}(v_i-1)} (c_1(\mathcal{S}_{\tau i})+\tfrac{\hbar}{2})^r\cap [R_{-\epsilon_{\tau i,v_i}}]. \]
\end{theorem}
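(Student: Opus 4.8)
The plan is to show that the composite map $\psi$ from Theorem \ref{thm:gklo} lands inside the image of the embedding $z^*\circ(\iota_*)^{-1}\colon \mathcal{A}_\hbar[\hbar^{-1}]\hookrightarrow \Diff_\hbar(T_V^\tau)\otimes H_{G_W}^*(\pt)$, and then to identify the resulting preimages with the monopole operators written in the statement. First I would recall that $\mathbf{Y}^{\tau}_{\mu}(\mathfrak{g}_Q)$ is generated by the $h_{i,r}$ and $b_{i,s}$, so it suffices to check that each $\psi(h_{i,r})=H_{i,r}$ and each $\psi(b_{i,s})=B_{i,s}$ lies in the subalgebra $z^*((\iota_*)^{-1}(\mathcal{A}_\hbar))\otimes(\text{scalars})$, after inverting $\hbar$. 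For the Cartan generators this is immediate from \cref{lem:Hiz}(2): $H_{i,r}\in H_{G_V^\tau\times G_W\times\mathbb{C}^*_\hbar}^*(\pt)$, which is precisely the Gelfand--Tsetlin subalgebra $H^{G_\calO\times F_\calO\times\mathbb{C}^*_\hbar}_*(\pt)\subset\mathcal{A}_\hbar$ under the embedding \eqref{eq:embeddiff}; so $\hbar^{-1}H_{i,r}=\Psi(h_{i,r})$ makes sense and defines the first assertion.

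Next I would handle the $b$-generators via the minuscule monopole operator formula \eqref{eq:minu-mono-op}. The key point is that for $i\in Q_0^+$ the coweight $\epsilon_{i,1}$ is minuscule for $G=G_V^\tau$, so \cref{prop:fmono} gives an \emph{exact} (no lower-order terms) expression for $f(c_1(\mathcal{Q}_i))\cap[\mathcal{R}_{\epsilon_{i,1}}]$ as an element of $\mathcal{A}_\hbar$, sitting inside $\Diff_\hbar(T_V^\tau)\otimes H_{G_W}^*(\pt)$. Comparing the formula in \cref{prop:fmono} with the definition \eqref{equ:Biz} of $B_i(u)$, one sees that $B_i(u)=\sum_{r\ge 0}B_{i,r}u^{-r-1}$, and expanding $\frac{1}{-u-x_{i,r}-\hbar/2}$ as a geometric series in $u^{-1}$ shows that $B_{i,r}$ is, up to the explicit sign $(-1)^{1+\sum_{h\notin Q_1^\tau,\,s(h)=i}v_{t(h)}+\sum_{h\in Q_1^\tau,\,s(h)=i}(v_i-1)}$ and the factor $\hbar^{-1}$, exactly $(-c_1(\mathcal{Q}_i)-\tfrac{\hbar}{2})^r\cap[\mathcal{R}_{\epsilon_{i,1}}]$; this is a direct bookkeeping of the products $\prod_h V_{t(h)}$, the factor $W_{\tau i}$, and the Euler class $V_{i,r}(x_{i,r})=\Eu(T_{\epsilon_{i,r}}\Gr_{\epsilon_{i,1}})$. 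For $i\in Q_0^-$ one uses instead that $-\epsilon_{\tau i, v_i}$ is minuscule and applies \cref{prop:fmono2} with the line bundle $\mathcal{S}_{\tau i}$ and the operator $d_{\tau i,r}^{-1}=d_{i,r}$; the signs and the $W$-polynomials match \eqref{equ:Biz} after the substitution $x_{i,j}=-x_{\tau i,j}$. This gives the stated formulas for $\Psi(b_{i,r})$ and, in particular, shows $\psi(b_{i,s})\in z^*((\iota_*)^{-1}(\mathcal{A}_\hbar[\hbar^{-1}]))$.

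Since $\psi$ is an algebra homomorphism and $z^*\circ(\iota_*)^{-1}$ is an embedding of algebras, the set of elements of $\mathbf{Y}^{\tau}_{\mu}(\mathfrak{g}_Q)\otimes H_{G_W}^*(\pt)$ whose $\psi$-image lies in $\mathcal{A}_\hbar[\hbar^{-1}]$ is a subalgebra; having checked it contains all algebra generators, it is everything, so $\psi$ factors uniquely through $\Psi\colon\mathbf{Y}^{\tau}_{\mu}(\mathfrak{g}_Q)\otimes H_{G_W}^*(\pt)\to\mathcal{A}_\hbar[\hbar^{-1}]$, and $\Psi$ is an $H_{G_W}^*(\pt)[\hbar,\hbar^{-1}]$-algebra homomorphism because $\psi$ is one over $H_{G_W}^*(\pt)(\hbar)$ and its image lies in $\mathcal{A}_\hbar[\hbar^{-1}]$. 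Uniqueness of $\Psi$ follows from injectivity of $z^*\circ(\iota_*)^{-1}$. The main obstacle I anticipate is \emph{not} the factorization itself — that is essentially formal once Theorem \ref{thm:gklo} is in hand — but rather the precise tracking of the overall signs and the matching of the shift parameter $-c_1(\mathcal{Q}_i)-\tfrac{\hbar}{2}$ (resp.\ $c_1(\mathcal{S}_{\tau i})+\tfrac{\hbar}{2}$) against the variable $u$ in $B_i(u)$; this is where one must be careful with the identifications $V_{i,r}(x_{i,r})=\prod_{s\ne r}(x_{i,r}-x_{i,s})$, with the geometric-series expansion $\tfrac{1}{-u-x-\hbar/2}=-\sum_{r\ge 0}(-x-\hbar/2)^r u^{-r-1}$, and with the factor $(-1)^{v_i-1}$ coming from $V_{\tau i,r}(x_{i,r}+\tfrac{\hbar}{2})$ versus $W_{\tau i}$, to land exactly on the stated normalization.
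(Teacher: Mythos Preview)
Your proposal is correct and follows essentially the same approach as the paper: the paper's proof consists of the single line ``This follows from \cref{prop:fmono} and \cref{prop:fmono2},'' which is exactly the core of your argument, and the remark immediately after the theorem records your observation about $H_{i,r}$ lying in the Gelfand--Tsetlin subalgebra via \cref{lem:Hiz}(2). Your write-up simply makes explicit the sign and coefficient matching that the paper leaves implicit.
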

\begin{proof}
    This follows from \cref{prop:fmono} and \cref{prop:fmono2}.
\end{proof}
\begin{remark}
    By \cref{lem:Hiz}(2), the image of $h_{i,r}$ lies in the Gelfand--Tsetlin subalgebra $H_{G_V^\tau\times G_W\times \mathbb{C}^*_\hbar}^*(\pt)$ of $\mathcal{A}_\hbar[\hbar^{-1}]$.  
\end{remark}

\section{Relations (I)}\label{sec:relations}
In this section, we start to prove Theorem \ref{thm:gklo} by checking that the operators $B_i(z)$ and $H_i(z)$ satisfy the relations in the shifted twisted Yangian. 

The relation $\left[H_i(u), H_j(v)\right]=0$ is obvious, while $H_{\tau i}(u)=H_i(-u)$ is proved in \cref{lem:Hiz}(1). Before checking Relation \eqref{eq:hbcomm}, let us introduce some notations. For $i\in Q_0$ and $1\leq r\leq v_i$, let 
\begin{align*}
    y_{i,r}:=&\prod_{h\in Q_1^\tau \atop s(h)=i}V_{\tau i,r}(x_{i,r}+\tfrac{\hbar}{2})\prod_{h\notin Q_1^\tau \atop s(h)=i}V_{t(h)}(x_{i,r}+\tfrac{\hbar}{2})\frac{W_{\tau i}(-x_{i,r}-\tfrac{\hbar}{2})}{V_{i,r}(x_{i,r})}d_{i,r}\\
    =&\frac{\prod_{h\in Q_1\atop s(h)=i}V_{t(h)}(x_{i,r}+\tfrac{\hbar}{2})}{\prod_{h\in Q_1^\tau\atop s(h)=i}(2x_{i,r}+\tfrac{\hbar}{2})}\frac{W_{\tau i}(-x_{i,r}-\tfrac{\hbar}{2})}{V_{i,r}(x_{i,r})}d_{i,r}. 
\end{align*}
Then
\begin{align*}
    y_{\tau i,r}=&\prod_{h\in Q_1^\tau\atop t(h)=i}V_{i,r}(-x_{i,r}+\tfrac{\hbar}{2})\prod_{h\notin Q_1^\tau\atop s(h)=\tau i}V_{t(h)}(-x_{i,r}+\tfrac{\hbar}{2})\frac{W_i(x_{i,r}-\tfrac{\hbar}{2})}{(-1)^{v_i-1}V_{i,r}(x_{i,r})}d_{i,r}^{-1}\\
=&\frac{\prod_{h\in Q_1\atop s(h)=\tau i}V_{t(h)}(-x_{i,r}+\tfrac{\hbar}{2})}{\prod_{h\in Q_1^\tau\atop t(h)=i}(-2x_{i,r}+\tfrac{\hbar}{2})}\frac{W_i(x_{i,r}-\tfrac{\hbar}{2})}{(-1)^{v_i-1}V_{i,r}(x_{i,r})}d_{i,r}^{-1},
\end{align*}
and
\[B_i(z) =\sum_{r=1}^{v_i}\frac{1}{-z-x_{i,r}-\tfrac{\hbar}{2}} y_{i,r}.\]

It is direct to get the following commutation relations. First of all,
\[y_{i,r}x_{j,s}=(x_{j,s}+\delta_{i,j}\delta_{r,s}\hbar-\delta_{\tau i,j}\delta_{r,s}\hbar)y_{i,r}.\] For $1\leq r\neq s\leq v_i$,
\begin{equation}\label{equ:yiyi}
	y_{i,s}y_{i,r}=\frac{x_{i,r}-x_{i,s}+\hbar}{x_{i,r}-x_{i,s}-\hbar}y_{i,r}y_{i,s},
\end{equation}
and
\begin{equation}\label{equ:ytauiyi}
	y_{\tau i,s}y_{i,r}=\Big(\frac{x_{i,r}-\tfrac{\hbar}{2}+x_{i,s}}{x_{i,r}+\tfrac{\hbar}{2}+x_{i,s}}\Big)^{-c_{i,\tau i}}y_{i,r}y_{\tau i,s}.
\end{equation}

If $c_{ij}=-1$, for $1\leq r\leq v_i$ and $1\leq s\leq v_j$, then
\begin{equation}\label{equ:yjyi}
	y_{j,s}y_{i,r}=\Big(\frac{x_{i,r}-\tfrac{\hbar}{2}-x_{j,s}}{x_{i,r}+\tfrac{\hbar}{2}-x_{j,s}}\Big)^{-c_{ij}}y_{i,r}y_{j,s}.
\end{equation}

\subsection{Relation \eqref{eq:hbcomm}: commutativity of $h_{i,r}$ and $b_{j,s}$}
In this section, we check the relation \eqref{eq:hbcomm(z)}
\begin{align}\label{equ:hibj}
&
\left(u^2-v^2-\frac{c_{i j} c_{\tau i, j}}{4} \hbar^2\right)\left[h_i(u), b_j(v)\right]-\bigg(\frac{c_{i j}-c_{\tau i, j}}{2} \hbar u +\frac{c_{i j}+c_{\tau i, j}}{2} \hbar v\bigg)\left\{h_i(u), b_j(v)\right\}\\
&\notag
+\hbar\left[h_i(u), b_{j, 1}\right] 
+\hbar v\left[h_i(u), b_{j, 0}\right]+\frac{c_{i j}+c_{\tau i, j}}{2} \hbar^2\left\{h_i(u), b_{j, 0}\right\}=0
\end{align}
This is divided into two cases: $j\in \{i,\tau i\}$ and $j\notin \{i,\tau i\}$.

\subsubsection{The case $j\in \{i,\tau i\}$}
Assume $j=i$. 
From definition, it is easy to get
\begin{align*}
    B_i(v)H_i(u)
    =&\sum_{r=1}^{v_i}\frac{1}{-v-x_{i,r}-\tfrac{\hbar}{2}} \frac{u-\tfrac{\hbar}{2}+x_{i,r}}{u+\tfrac{3\hbar}{2}+x_{i,r}}\Big(\frac{u-x_{i,r}-\hbar}{u-x_{i,r}}\Big)^{-c_{i,\tau i}}H_i(u)y_{i,r}.
\end{align*}
Hence, the coefficient of $H_i(u)y_{i,r}$ in the left hand side of \eqref{equ:hibj} is $0$ because of the following identity (notice that $c_{i,\tau i}=0$ or $-1$):
\begin{align*}
    &\big(u^2-v^2-\frac{ c_{\tau i, i}}{2}\hbar^2 \big)\frac{1}{-v-x_{i,r}-\tfrac{\hbar}{2}}\bigg(1-\frac{u-\tfrac{\hbar}{2}+x_{i,r}}{u+\tfrac{3\hbar}{2}+x_{i,r}}\Big(\frac{u-x_{i,r}-\hbar}{u-x_{i,r}}\Big)^{-c_{i,\tau i}}\bigg)\\
    &-\Big(\frac{2-c_{\tau i, i}}{2}u+\frac{2+c_{\tau i, i}}{2}v\Big)\frac{1}{-v-x_{i,r}-\tfrac{\hbar}{2}}\hbar \bigg(1+\frac{u-\tfrac{\hbar}{2}+x_{i,r}}{u+\tfrac{3\hbar}{2}+x_{i,r}}\Big(\frac{u-x_{i,r}-\hbar}{u-x_{i,r}}\Big)^{-c_{i,\tau i}}\bigg)\\
    &+(x_{i,r}+\tfrac{\hbar}{2})\bigg(1-\frac{u-\tfrac{\hbar}{2}+x_{i,r}}{u+\tfrac{3\hbar}{2}+x_{i,r}}\Big(\frac{u-x_{i,r}-\hbar}{u-x_{i,r}}\Big)^{-c_{i,\tau i}}\bigg)\\
    &-v\bigg(1-\frac{u-\tfrac{\hbar}{2}+x_{i,r}}{u+\tfrac{3\hbar}{2}+x_{i,r}}\Big(\frac{u-x_{i,r}-\hbar}{u-x_{i,r}}\Big)^{-c_{i,\tau i}}\bigg)\\
    &-\frac{2+c_{\tau i,i}}{2}\hbar \bigg(1+\frac{u-\tfrac{\hbar}{2}+x_{i,r}}{u+\tfrac{3\hbar}{2}+x_{i,r}}\Big(\frac{u-x_{i,r}-\hbar}{u-x_{i,r}}\Big)^{-c_{i,\tau i}}\bigg) = 0.
\end{align*}
Hence, \eqref{equ:hibj} holds in this case. 
The case $j=\tau i$ can be checked similarly.

\subsubsection{The case $j\notin \{i,\tau i\}$}
Similar to the above,
\begin{align*}
    B_{j}(v)H_i(u)
    =&\sum_{r=1}^{v_j}\frac{1}{-v-x_{j,r}-\tfrac{\hbar}{2}}
    \Big(\frac{u-x_{j,r}-\hbar}{u-x_{j,r}}\Big)^{-c_{\tau i, j}}\Big(\frac{u+x_{j,r}+\hbar}{u+x_{j,r}}\Big)^{-c_{i,j}} H_i(u)y_{j,r}.
\end{align*}
\eqref{equ:hibj} holds because of the following identity (notice that $c_{i,j}, c_{\tau i,j}=0$ or $-1$):
\begin{align*}
    &\big(u^2-v^2-\frac{ c_{ij}c_{\tau i,j}}{4}\hbar^2 \big)\frac{1}{-v-x_{j,r}-\tfrac{\hbar}{2}}\bigg(1-\Big(\dfrac{u-x_{j,r}-\hbar}{u-x_{j,r}}\Big)^{-c_{\tau i, j}}\Big(\frac{u+x_{j,r}+\hbar}{u+x_{j,r}}\bigg)^{-c_{i,j}}\Big)\\
    &-\Big(\frac{c_{ij}-c_{\tau i, j}}{2}\hbar u+\frac{c_{ij}+c_{\tau i, j}}{2}\hbar v\Big)\frac{1}{-v-x_{j,r}-\tfrac{\hbar}{2}} \bigg(1+\Big(\frac{u-x_{j,r}-\hbar}{u-x_{j,r}}\Big)^{-c_{\tau i, j}}\Big(\frac{u+x_{j,r}+\hbar}{u+x_{j,r}}\Big)^{-c_{i,j}}\bigg)\\
    &+(x_{j,r}+\tfrac{\hbar}{2})\bigg(1-\Big(\frac{u-x_{j,r}-\hbar}{u-x_{j,r}}\Big)^{-c_{\tau i, j}}\Big(\frac{u+x_{j,r}+\hbar}{u+x_{j,r}}\Big)^{-c_{i,j}}\bigg)\\
    &-v\bigg(1-\Big(\frac{u-x_{j,r}-\hbar}{u-x_{j,r}}\Big)^{-c_{\tau i, j}}\Big(\frac{u+x_{j,r}+\hbar}{u+x_{j,r}}\Big)^{-c_{i,j}}\bigg)\\
    &-\frac{c_{ij}+c_{\tau i,j}}{2}\hbar \bigg(1+\Big(\frac{u-x_{j,r}-\hbar}{u-x_{j,r}}\Big)^{-c_{\tau i, j}}\Big(\frac{u+x_{j,r}+\hbar}{u+x_{j,r}}\Big)^{-c_{i,j}}\bigg) = 0.
\end{align*}

\subsection{Some identites}
Now we turn to the other relations. We need some combinatorial identities.
For a Laurant series $f(z)=\sum_{i\in \mathbb{Z}} a_iz^{i}$ in $z^{-1}$ ,
we denote the truncation 
\[\big(f(z)\big)^{\circ}
:=\sum_{i< 0}a_iz^i.\] 
If $\tilde{f}(z):=f(z)/z$, then
\begin{equation}\label{equ:trundivbyz}
    u\big(\tilde{f}(u)\big)^\circ+v\big(\tilde{f}(-v)\big)^\circ
    =u\sum_{i< 1}a_iu^{i-1}-(-v)\sum_{i< 1}a_i(-v)^{i-1}=\big(f(u)\big)^{\circ}-\big(f(-v)\big)^{\circ}.
\end{equation}

\begin{lemma}\label{lem:partialfrac}
    Assume $f(z)=\dfrac{g(z)}{\prod_{i=1}^n (z-z_i)}$, where $z_1,\ldots,z_n$ are distinct and $g(z)\in \mathbb{C}[z]$. Then
\[(f(z))^\circ=\sum_{i=1}^n\frac{1}{z-z_i}\frac{g(z_i)}{\prod_{j\neq i}(z_i-z_j)}.\]
\end{lemma}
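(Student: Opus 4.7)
The plan is to reduce the identity to a standard partial fraction expansion and then identify the $z^{-1}$-Laurent pieces correctly. Since the $z_i$ are distinct, classical partial fractions gives a unique decomposition
\[
f(z) = p(z) + \sum_{i=1}^n \frac{c_i}{z - z_i},
\]
where $p(z) \in \mathbb{C}[z]$ is a polynomial (the polynomial quotient of $g(z)$ by $\prod_i (z-z_i)$), and the residues $c_i$ are computed by the usual formula $c_i = g(z_i)/\prod_{j \neq i}(z_i - z_j)$, which one recovers by multiplying both sides by $(z-z_i)$ and evaluating at $z = z_i$.

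Next, I would track which terms contribute to $(\cdot)^\circ$, the piece of the Laurent series in $z^{-1}$ supported in strictly negative powers of $z$. The polynomial part $p(z)$ is supported in nonnegative powers of $z$, so $(p(z))^\circ = 0$. On the other hand, each summand $(z - z_i)^{-1}$ expands as the geometric series
\[
\frac{1}{z - z_i} = \frac{1}{z}\cdot\frac{1}{1 - z_i/z} = \sum_{k \geq 0} \frac{z_i^k}{z^{k+1}},
\]
which is already supported entirely in negative powers of $z$, hence equals its own truncation. Summing these observations yields
\[
(f(z))^\circ = \sum_{i=1}^n \frac{c_i}{z - z_i} = \sum_{i=1}^n \frac{1}{z-z_i}\cdot \frac{g(z_i)}{\prod_{j \neq i}(z_i - z_j)},
\]
which is the claimed identity.

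There is no genuine obstacle here; this is a bookkeeping lemma. The only mild care needed is the consistency of the convention for $(\cdot)^\circ$: one must expand $(z - z_i)^{-1}$ as a series in $z^{-1}$ (valid in the relevant formal Laurent ring, compatible with the expansions of the generating functions $h_i(z)$, $b_i(z)$ in \eqref{generatingfunction}) rather than as a series in $z$. This is precisely the convention fixed before the lemma, so the expansion is the correct one, and the argument above goes through directly.
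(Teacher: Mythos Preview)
Your proof is correct and follows essentially the same approach as the paper: partial-fraction decomposition into a polynomial part plus simple fractions, with the residues computed as $c_i=g(z_i)/\prod_{j\neq i}(z_i-z_j)$. You add a bit more detail on why the polynomial part drops under $(\cdot)^\circ$ and why each $(z-z_i)^{-1}$ survives intact, but the argument is the same.
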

\begin{proof}
By partial fractions, we can write $$f(z)=\sum_{i=1}^n\frac{b_i}{z-z_i}+h(z),$$
for some polynomial $h(z)\in \mathbb{C}[z]$ and the coefficients $b_i$ can be determined by the residue
\[b_i=\operatorname{Res}_{z=z_i}f(z)dz=\frac{g(z_i)}{\prod_{j\neq i}(z_i-z_j)}.
\qedhere\]
\end{proof}

Recall the operator $H_i(u)$ from \cref{equ:Hu}.
\begin{corol}\label{cor:cequal0H}
    If $c_{i,\tau i}=0$,
    \begin{align*}
    \hbar \cdot (H_i(u))^\circ=\sum_{r=1}^{v_i} \frac{1}{u+x_{i,r}-\tfrac{\hbar}{2}}y_{\tau i,r}y_{i,r}-\sum_{r=1}^{v_i}\frac{1}{u+x_{i,r}+\tfrac{\hbar}{2}}y_{i,r}y_{\tau i,r}.
    \end{align*}
\end{corol}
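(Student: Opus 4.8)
The identity to be proved expresses $\hbar\,(H_i(u))^\circ$ as a sum of two ``two-term'' partial-fraction pieces built out of the operators $y_{i,r}y_{\tau i,r}$ and $y_{\tau i,r}y_{i,r}$. Since $c_{i,\tau i}=0$ there is no arrow between $i$ and $\tau i$, so by \eqref{equ:ytauiyi} the operators $y_{i,r}$ and $y_{\tau i,r}$ commute, and moreover $y_{i,r}y_{\tau i,r}$ is a \emph{scalar} difference operator: indeed $d_{i,r}d_{\tau i,r}^{-1}d_{i,r}^{-1}d_{\tau i,r}$ — wait, more precisely $y_{i,r}$ carries $d_{i,r}$ and $y_{\tau i,r}$ carries $d_{\tau i,r}=d_{i,r}^{-1}$, so the product lies in the field of rational functions $\mathbb{C}(\mathfrak t\times\mathbb A^1)$ with no shift. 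So the plan is: first, compute the scalar $y_{i,r}y_{\tau i,r}$ (and $y_{\tau i,r}y_{i,r}$, which differs by a shift in the $r$-th variable) explicitly as a rational function in the $x$'s, $\hbar$, and the framing variables $w$; then recognize the right-hand side as the partial-fraction decomposition, in the variable $u$, of $\hbar\,(H_i(u))^\circ$ via \cref{lem:partialfrac}.

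**Key steps in order.** First I would write out $y_{i,r}y_{\tau i,r}$. Using the two displayed formulas for $y_{i,r}$ and $y_{\tau i,r}$ in the section, and the commutation rule $y_{i,r}x_{j,s}=(x_{j,s}+\delta_{i,j}\delta_{r,s}\hbar-\delta_{\tau i,j}\delta_{r,s}\hbar)y_{i,r}$, one moves $d_{i,r}$ past the rational prefactor of $y_{\tau i,r}$; since $d_{i,r}$ shifts $x_{i,r}\mapsto x_{i,r}+\hbar$ and $x_{\tau i,r}\mapsto x_{\tau i,r}-\hbar$ (equivalently $x_{i,r}-\hbar$ under the identification $x_{\tau i,r}=-x_{i,r}$), the two $d$'s cancel and one is left with a rational function. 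I expect the product to simplify, after using $V_i(z)=(z-x_{i,r})V_{i,r}(z)$ and the definition \eqref{equ:Hu} of $H_i(u)$ with $c_{i,\tau i}=0$, to exactly $\hbar$ times the residue of $H_i(u)$ at $u=-x_{i,r}+\tfrac{\hbar}{2}$; likewise $y_{\tau i,r}y_{i,r}$ should give $\hbar$ times $\operatorname{Res}_{u=-x_{i,r}-\hbar/2}H_i(u)$ up to sign. Second, I would apply \cref{lem:partialfrac} to $f(u)=H_i(u)$: its denominator is $V_i(-u+\tfrac\hbar2)V_i(-u-\tfrac\hbar2)$, whose $2v_i$ simple roots are $u=-x_{i,s}+\tfrac\hbar2$ and $u=-x_{i,s}-\tfrac\hbar2$ for $1\le s\le v_i$ (these are distinct since the $x_{i,s}$ are generic), and the numerator has degree $\langle\alpha_i,\mu\rangle = c_{i,\tau i}+w_i+w_{\tau i}-2v_i+\cdots < 2v_i$ by \cref{lem:Hiz}, so $(H_i(u))^\circ$ equals the full partial-fraction sum with no polynomial part. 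Matching the residue at $u=-x_{i,r}+\tfrac\hbar2$ with $\hbar^{-1}y_{\tau i,r}y_{i,r}$ and at $u=-x_{i,r}-\tfrac\hbar2$ with $-\hbar^{-1}y_{i,r}y_{\tau i,r}$ (after rewriting $\frac{1}{u-(-x_{i,r}+\hbar/2)}=\frac{1}{u+x_{i,r}-\hbar/2}$, etc.) then yields the claimed formula.

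**Main obstacle.** The bookkeeping in the residue computation is the delicate part: one must correctly track the interaction of the shift operator $d_{i,r}$ with \emph{both} the $V$-factors and the $W$-factors when forming $y_{i,r}y_{\tau i,r}$, keep straight the identification $x_{\tau i,s}=-x_{i,s}$ throughout, and chase the sign $(-1)^{v_i-1}$ and the $(-1)^{\delta_{i\to\tau i}}$ in \eqref{equ:Hu} so that they land consistently on the two sides; a minus sign discrepancy between the $-x_{i,r}+\hbar/2$ and $-x_{i,r}-\hbar/2$ residues is exactly what produces the relative minus sign between the two sums, and getting that exactly right is where care is needed. Once the single-residue identity is established the rest is a formal consequence of \cref{lem:partialfrac} and the degree bound from \cref{lem:Hiz}(3), so I do not anticipate a conceptual difficulty beyond the algebra.
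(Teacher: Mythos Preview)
Your approach is correct and essentially identical to the paper's proof: compute the scalar products $y_{i,r}y_{\tau i,r}$ and $y_{\tau i,r}y_{i,r}$ explicitly (the shift operators $d_{i,r}$ and $d_{\tau i,r}=d_{i,r}^{-1}$ cancel) and recognize the right-hand side as the partial-fraction expansion of $\hbar\,H_i(u)$ furnished by \cref{lem:partialfrac}. Two minor corrections: \eqref{equ:ytauiyi} is stated only for $r\ne s$ and does \emph{not} imply $[y_{i,r},y_{\tau i,r}]=0$ (indeed the two orderings differ, which is precisely why they sit at the two distinct poles $-x_{i,r}\pm\tfrac\hbar2$), though you never actually use this commutativity; and the degree bound from \cref{lem:Hiz} is unnecessary, since \cref{lem:partialfrac} already discards any polynomial part via $(\,\cdot\,)^\circ$.
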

\begin{proof}
Since $c_{i,\tau i}=0$, there is no $h\in Q_1^\tau$ with $s(h)=i$. Hence,
\[y_{i,r}=\prod_{h\notin Q_1^\tau\atop s(h)=i}V_{t(h)}(x_{i,r}+\tfrac{\hbar}{2})\frac{W_{\tau i}(-x_{i,r}-\tfrac{\hbar}{2})}{V_{i,r}(x_{i,r})}d_{i,r}\\\]
Therefore,
\begin{align*}
    &\sum_{r=1}^{v_i} \frac{1}{u+x_{i,r}-\tfrac{\hbar}{2}}y_{\tau i,r}y_{i,r}-\sum_{r=1}^{v_i}\frac{1}{u+x_{i,r}+\tfrac{\hbar}{2}}y_{i,r}y_{\tau i,r}\\
    =\,&\sum_{r=1}^{v_i} \frac{1}{u+x_{i,r}-\tfrac{\hbar}{2}}\prod_{h\notin Q_1^\tau\atop s(h)=\tau i}V_{t(h)}(-x_{ i,r}+\tfrac{\hbar}{2})\frac{W_{ i}(x_{i,r}-\tfrac{\hbar}{2})}{(-1)^{v_i-1}V_{i,r}(x_{i,r})}\prod_{h\notin Q_1^\tau\atop s(h)=i}V_{t(h)}(x_{i,r}-\tfrac{\hbar}{2})\frac{W_{\tau i}(-x_{i,r}+\tfrac{\hbar}{2})}{V_{i,r}(x_{i,r}-\hbar)}\\
    &-\sum_{r=1}^{v_i}\frac{1}{u+x_{i,r}+\tfrac{\hbar}{2}}\prod_{h\notin Q_1^\tau\atop s(h)=i}V_{t(h)}(x_{i,r}+\tfrac{\hbar}{2})\frac{W_{\tau i}(-x_{i,r}-\tfrac{\hbar}{2})}{V_{i,r}(x_{i,r})}\prod_{h\notin Q_1^\tau\atop s(h)=\tau i}V_{t(h)}(-x_{ i,r}-\tfrac{\hbar}{2})\frac{W_{ i}(x_{i,r}+\tfrac{\hbar}{2})}{(-1)^{v_i-1}V_{i,r}(x_{i,r}+\hbar)}\\
    =\,&\hbar\cdot (H_i(u))^\circ.
\end{align*}
Here the last equality follows from Lemma \ref{lem:partialfrac}.
\end{proof}

\begin{corol}\label{cor:uH}
    For any $i\in Q_0$,
    \begin{align*}
    2\hbar u(H_i(u))^\circ+2\hbar v(H_i(-v))^\circ
    & =\sum_{r=1}^{v_i}\bigg(\frac{2x_{i,r}+\hbar+\frac{c_{i,\tau i}}{2}\hbar}{u+x_{i,r}+\tfrac{\hbar}{2}}y_{i,r}y_{\tau i, r}-\frac{2x_{i,r}-\hbar-\frac{c_{i,\tau i}}{2}\hbar}{u+x_{i,r}-\tfrac{\hbar}{2}}y_{\tau i,r}y_{i, r}\bigg)\\
    &\quad -\sum_{r=1}^{v_i}\bigg(\frac{2x_{i,r}+\hbar+\frac{c_{i,\tau i}}{2}\hbar}{-v+x_{i,r}+\tfrac{\hbar}{2}}y_{i,r}y_{\tau i, r}-\frac{2x_{i,r}-\hbar-\frac{c_{i,\tau i}}{2}\hbar}{-v+x_{i,r}-\tfrac{\hbar}{2}}y_{\tau i,r}y_{i, r}\bigg)
\end{align*}
\end{corol}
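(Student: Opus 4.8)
The plan is to reduce the identity to the computation of the residues of the rational function $\hbar H_i(u)$, and then to evaluate those residues (invoking Corollary~\ref{cor:cequal0H} when $c_{i,\tau i}=0$ and a parallel direct calculation when $c_{i,\tau i}=-1$). Regarding $\hbar H_i(u)$ from \eqref{equ:Hu} as a rational function of $u$, its poles are simple: they lie at $u=-x_{i,r}\pm\tfrac{\hbar}{2}$ for $1\le r\le v_i$, produced by the denominators $V_i(-u+\tfrac{\hbar}{2})$ and $V_i(-u-\tfrac{\hbar}{2})$, together with one extra pole at $u=0$ when $c_{i,\tau i}=-1$, produced by the factor $(2u)^{c_{i,\tau i}}$. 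By Lemma~\ref{lem:partialfrac} we may write $\hbar(H_i(u))^\circ=\sum_{p}\tfrac{b_p}{u-p}$ with $b_p=\operatorname{Res}_{u=p}\bigl(\hbar H_i(u)\bigr)$. Using the elementary identities $\tfrac{2u}{u-p}=2+\tfrac{2p}{u-p}$ and $\tfrac{2v}{-v-p}=-2+\tfrac{2p}{v+p}$, one gets
\[
2\hbar u\,(H_i(u))^\circ+2\hbar v\,(H_i(-v))^\circ
=\sum_{p}\frac{2p\,b_p}{u-p}+\sum_{p}\frac{2p\,b_p}{v+p},
\]
so the constant terms cancel between the two summands and the pole at $u=0$ (if present) contributes nothing, its coefficient carrying the factor $2p=0$. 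Since $2p_r^{\pm}=-(2x_{i,r}\pm\hbar)$ for $p_r^{\pm}=-x_{i,r}\pm\tfrac{\hbar}{2}$ and $u-p_r^{\pm}=u+x_{i,r}\mp\tfrac{\hbar}{2}$, the claimed formula is therefore \emph{equivalent} to the two residue identities
\[
b_{p_r^-}=\frac{2x_{i,r}-\hbar-\tfrac{c_{i,\tau i}}{2}\hbar}{2x_{i,r}-\hbar}\;y_{\tau i,r}y_{i,r},
\qquad
b_{p_r^+}=-\,\frac{2x_{i,r}+\hbar+\tfrac{c_{i,\tau i}}{2}\hbar}{2x_{i,r}+\hbar}\;y_{i,r}y_{\tau i,r}.
\]

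For $c_{i,\tau i}=0$ these are immediate from Corollary~\ref{cor:cequal0H}, which already identifies $\hbar(H_i(u))^\circ$ and gives $b_{p_r^-}=y_{\tau i,r}y_{i,r}$, $b_{p_r^+}=-y_{i,r}y_{\tau i,r}$ (and no pole at $u=0$). For $c_{i,\tau i}=-1$ I would compute $b_{p_r^{\pm}}$ directly from the residue formula of Lemma~\ref{lem:partialfrac} applied to the explicit product \eqref{equ:Hu}, organized exactly as in the proof of Corollary~\ref{cor:cequal0H}, but now keeping the extra factor $(2u)^{-1}$ and the factor $V_{t(h)}$ coming from the unique $\tau$-fixed arrow $h$ joining $i$ and $\tau i$. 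Evaluating $(2u)^{-1}$ at $u=p_r^{\pm}$ produces $\tfrac{-1}{2x_{i,r}\pm\hbar}$, while the relevant $V_{t(h)}$-factor evaluated at the same point contributes an extra factor $2x_{i,r}\pm\tfrac{\hbar}{2}$ on top of what appears in $y_{\tau i,r}y_{i,r}$ (resp.\ $y_{i,r}y_{\tau i,r}$); here one crucially uses that the factor $2x_{i,r}+\tfrac{\hbar}{2}$ inside $V_{t(h)}(x_{i,r}+\tfrac{\hbar}{2})=V_{\tau i}(x_{i,r}+\tfrac{\hbar}{2})$ cancels the denominator $\prod_{h\in Q_1^\tau,\ s(h)=i}(2x_{i,r}+\tfrac{\hbar}{2})$ of $y_{i,r}$. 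Collecting these contributions, and tracking the remaining signs ($(-1)^{v_i-1}$, $(-1)^{\delta_{i\to\tau i}}$, the derivative $-1$ coming from differentiating $-u\pm\tfrac{\hbar}{2}-x_{i,r}$, and the evaluation of the other $V_i$-factor), yields exactly the two displayed residue identities with $c_{i,\tau i}=-1$. Substituting these back into the displayed sum finishes the proof.

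The main obstacle is this last residue computation for $c_{i,\tau i}=-1$. Unlike in Corollary~\ref{cor:cequal0H}, both $H_i(u)$ and the difference operators $y_{i,r},y_{\tau i,r}$ acquire contributions from the $\tau$-fixed loop, and one must bookkeep carefully how the simple pole at $u=\mp\tfrac{\hbar}{2}-x_{i,r}$ interacts with the $(2u)^{-1}$ factor and with the $2x_{i,r}\pm\tfrac{\hbar}{2}$ factors hidden inside the various $V_{\tau i}$'s, so that after all cancellations precisely the single leftover factor $\tfrac{2x_{i,r}\mp\frac{\hbar}{2}}{2x_{i,r}\mp\hbar}$ survives. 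A mild additional point is to check that the argument is uniform in the orientation of the $\tau$-fixed arrow (i.e.\ whether $\delta_{i\to\tau i}$ or $\delta_{\tau i\to i}$ equals $1$), which merely interchanges the roles of the products over $\{h:s(h)=i\}$ and $\{h:s(h)=\tau i\}$ in \eqref{equ:Hu} and in the formulae for $y_{i,r}$, $y_{\tau i,r}$.
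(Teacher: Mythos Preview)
Your approach is correct and is essentially the same as the paper's: both arguments compute the partial fraction decomposition via Lemma~\ref{lem:partialfrac} and then match terms, reading off the $c_{i,\tau i}=0$ case from Corollary~\ref{cor:cequal0H} and doing the residue computation by hand when $c_{i,\tau i}=-1$. The only organizational difference is that the paper applies Lemma~\ref{lem:partialfrac} to $2\hbar u\,H_i(u)$ (so the pole at $u=0$ never appears) and then invokes \eqref{equ:trundivbyz}, whereas you apply it to $\hbar H_i(u)$ and cancel the constants and the $u=0$ contribution by the elementary identity $\tfrac{2u}{u-p}=2+\tfrac{2p}{u-p}$; these are two ways of saying the same thing. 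One small slip: your labeling of $p_r^{\pm}$ is internally inconsistent (the formulas $p_r^{\pm}=-x_{i,r}\pm\tfrac{\hbar}{2}$, $2p_r^{\pm}=-(2x_{i,r}\pm\hbar)$, and $u-p_r^{\pm}=u+x_{i,r}\mp\tfrac{\hbar}{2}$ cannot all hold), and as written your displayed residue identities have $b_{p_r^+}$ and $b_{p_r^-}$ swapped relative to your stated definition of $p_r^{\pm}$; once the labels are fixed the argument goes through unchanged.
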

\begin{proof}
    If $c_{i,\tau i}=0$, then by Lemma \ref{lem:partialfrac} and Corollary \ref{cor:cequal0H}, we get
    \begin{align*}
    (2\hbar u\cdot H_i(u))^\circ=-\sum_{r=1}^{v_i} \frac{2x_{i,r}-\hbar}{u+x_{i,r}-\tfrac{\hbar}{2}}y_{\tau i,r}y_{i,r}+\sum_{r=1}^{v_i}\frac{2x_{i,r}+\hbar}{u+x_{i,r}+\tfrac{\hbar}{2}}y_{i,r}y_{\tau i,r}.
    \end{align*}
    Hence,
    \begin{align*}
        & \sum_{r=1}^{v_i}\bigg(\frac{2x_{i,r}+\hbar}{u+x_{i,r}+\tfrac{\hbar}{2}}y_{i,r}y_{\tau i, r}-\frac{2x_{i,r}-\hbar}{u+x_{i,r}-\tfrac{\hbar}{2}}y_{\tau i,r}y_{i, r}\bigg)\\
        &-\sum_{r=1}^{v_i}\bigg(\frac{2x_{i,r}+\hbar}{-v+x_{i,r}+\tfrac{\hbar}{2}}y_{i,r}y_{\tau i, r}-\frac{2x_{i,r}-\hbar}{-v+x_{i,r}-\tfrac{\hbar}{2}}y_{\tau i,r}y_{i, r}\bigg)\\
        =\,&(2\hbar u\cdot H_i(u))^\circ - (2\hbar\cdot (-v) H_i(-v))^\circ
        =2\hbar u (H_i(u))^\circ + 2\hbar v  (H_i(-v))^\circ.
    \end{align*}
    where the last equality follows from \eqref{equ:trundivbyz}.

    Now let us prove the case $c_{i,\tau i}=-1$. Since $H_{\tau i}(u)= H_i(-u)$, we can assume that $i$ is the source of the unique edge connecting $i$  and $\tau i$. Similar as in the proof of Corollary \ref{cor:cequal0H},
    \begin{align*}
        &\sum_{r=1}^{v_i}\frac{2x_{i,r}+\hbar+\frac{c_{i,\tau i}}{2}\hbar}{u+x_{i,r}+\tfrac{\hbar}{2}}y_{i,r}y_{\tau i, r}-\sum_{r=1}^{v_i}\frac{2x_{i,r}-\hbar-\frac{c_{i,\tau i}}{2}\hbar}{u+x_{i,r}-\tfrac{\hbar}{2}}y_{\tau i,r}y_{i, r}\\
        =\,&\sum_{r=1}^{v_i}\frac{1}{u+x_{i,r}+\tfrac{\hbar}{2}}\prod_{h\in Q_1\atop s(h)=i}V_{t(h)}(x_{i,r}+\tfrac{\hbar}{2})\frac{W_{\tau i}(-x_{i,r}-\tfrac{\hbar}{2})}{V_{i,r}(x_{i,r})}\\
        &\cdot \prod_{h\in Q_1\atop s(h)=\tau i}V_{t(h)}(-x_{i,r}-\tfrac{\hbar}{2})\frac{W_i(x_{i,r}+\tfrac{\hbar}{2})}{(-1)^{v_i-1}V_{i,r}(x_{i,r}+\hbar)}\\
        &-\sum_{r=1}^{v_i}\frac{1}{u+x_{i,r}-\tfrac{\hbar}{2}}\prod_{h\in Q_1\atop s(h)=\tau i}V_{t(h)}(-x_{i,r}+\tfrac{\hbar}{2})\frac{W_i(x_{i,r}-\tfrac{\hbar}{2})}{(-1)^{v_i-1}V_{i,r}(x_{i,r})}\\
        &\cdot \prod_{h\in Q_1\atop s(h)=i}V_{t(h)}(x_{i,r}-\tfrac{\hbar}{2})\frac{W_{\tau i}(-x_{i,r}+\tfrac{\hbar}{2})}{V_{i,r}(x_{i,r}-\hbar)} 
        = (2\hbar uH_i(u))^\circ,
    \end{align*}
    where the last equality follows from Lemma \ref{lem:partialfrac}.
Hence,
\begin{align*}
    &\sum_{r=1}^{v_i}\bigg(\frac{2x_{i,r}+\hbar+\frac{c_{i,\tau i}}{2}\hbar}{u+x_{i,r}+\tfrac{\hbar}{2}}y_{i,r}y_{\tau i, r}-\frac{2x_{i,r}-\hbar-\frac{c_{i,\tau i}}{2}\hbar}{u+x_{i,r}-\tfrac{\hbar}{2}}y_{\tau i,r}y_{i, r}\bigg)\\
    &-\sum_{r=1}^{v_i}\bigg(\frac{2x_{i,r}+\hbar+\frac{c_{i,\tau i}}{2}\hbar}{-v+x_{i,r}+\tfrac{\hbar}{2}}y_{i,r}y_{\tau i, r}-\frac{2x_{i,r}-\hbar-\frac{c_{i,\tau i}}{2}\hbar}{-v+x_{i,r}-\tfrac{\hbar}{2}}y_{\tau i,r}y_{i, r}\bigg)\\
    =\,&(2\hbar uH_i(u))^\circ-(2\hbar (-v)H_i(-v))^\circ
    =2\hbar u (H_i(u))^\circ + 2\hbar v  (H_i(-v))^\circ,
\end{align*}
where the last equality follows from \eqref{equ:trundivbyz}.    
\end{proof}

\subsection{Relation \eqref{eq:bcomm}: commutativity of $b_{i,r}$'s}
In this section, we check the relation \eqref{eq:bcomm(z)}
\begin{align}\label{equ:bbij}
    (u-v)\left[b_i(u), b_j(v)\right]-&\frac{c_{i j}}{2} \hbar\left\{b_i(u), b_j(v)\right\}-\hbar\left(\left[b_{i, 0}, b_j(v)\right]-\left[b_i(u), b_{j, 0}\right]\right) \\\notag&\qquad =-\delta_{\tau i, j} \hbar\left(\frac{2 u}{u+v} h_i^{\circ}(u)+\frac{2 v}{u+v} h_j^{\circ}(v)\right).
\end{align}
Recall that by our convention, $\hbar b_{i,m}$ maps to $B_{i,m}$, the coefficient of $u^{-m-1}$ in $B_i(u)$.
If $j\neq \tau i$, and $c_{ij}=0$, it follows from the relation $[B_i(u),B_j(v)]=0$ checked in Section \ref{sec:BBijc=0} below. Hence, we are left with the cases $j= \tau i$ or $c_{ij}\neq 0$, which is further decomposed into the following cases
\begin{itemize}
    \item $j\neq \tau i$, $c_{ij}= 2$; 
    \item $j\neq \tau i$, $c_{ij}= -1$;
    \item $j=\tau i$. 
\end{itemize}

\subsubsection{The case $j\neq \tau i$, $i=j$}
By definition,
\begin{align*}
    B_i(u)B_i(v)
   =\,&\sum_{r=1}^{v_i}\frac{1}{-u-x_{i,r}-\tfrac{\hbar}{2}} \frac{1}{-v-x_{i,r}-\tfrac{3\hbar}{2}} y_{i,r}y_{i,r}\\
    &+\sum_{1\leq r\neq s\leq v_i}\frac{1}{-u-x_{i,r}-\tfrac{\hbar}{2}} \frac{1}{-v-x_{i,s}-\tfrac{\hbar}{2}} y_{i,r}y_{i,s},
\end{align*}
and
\begin{align*}
    B_i(v)B_i(u)
    =\,&\sum_{r=1}^{v_i}\frac{1}{-u-x_{i,r}-\tfrac{3\hbar}{2}} \frac{1}{-v-x_{i,r}-\tfrac{\hbar}{2}} y_{i,r}y_{i,r}\\
    &+\sum_{1\leq r\neq s\leq v_i}\frac{1}{-u-x_{i,r}-\tfrac{\hbar}{2}} \frac{1}{-v-x_{i,s}-\tfrac{\hbar}{2}} y_{i,s}y_{i,r}.
\end{align*}

Therefore, for $r\neq s$, the coefficient of $y_{i,r}y_{i,s}$ in the left hand side of \eqref{equ:bbij} is
\begin{align*}
    &(u-v-\hbar)\frac{1}{-u-x_{i,r}-\tfrac{\hbar}{2}} \frac{1}{-v-x_{i,s}-\tfrac{\hbar}{2}}\\
    &-(u-v+\hbar)\frac{1}{-u-x_{i,r}-\tfrac{\hbar}{2}} \frac{1}{-v-x_{i,s}-\tfrac{\hbar}{2}}\frac{x_{i,r}-x_{i,s}+\hbar}{x_{i,r}-x_{i,s}-\hbar}\\
    &+\frac{1}{-v-x_{i,s}-\tfrac{\hbar}{2}}-\frac{1}{-u-x_{i,r}-\tfrac{\hbar}{2}}\\
    &-\frac{1}{-v-x_{i,s}-\tfrac{\hbar}{2}}\frac{x_{i,r}-x_{i,s}+\hbar}{x_{i,r}-x_{i,s}-\hbar}+\frac{1}{-u-x_{i,r}-\tfrac{\hbar}{2}}\frac{x_{i,r}-x_{i,s}+\hbar}{x_{i,r}-x_{i,s}-\hbar} = 0. 
\end{align*}
Here we have used \eqref{equ:yiyi}. Similarly, the coefficient of $y_{i,r}y_{i,r}$ in the left hand side of \eqref{equ:bbij} is
\begin{align*}
    &(u-v-\hbar)\frac{1}{-u-x_{i,r}-\tfrac{\hbar}{2}} \frac{1}{-v-x_{i,r}-\tfrac{3\hbar}{2}}-(u-v+\hbar)\frac{1}{-u-x_{i,r}-\tfrac{3\hbar}{2}} \frac{1}{-v-x_{i,r}-\tfrac{\hbar}{2}} \\
    &+\frac{1}{-v-x_{i,r}-\tfrac{3\hbar}{2}}-\frac{1}{-v-x_{i,r}-\tfrac{\hbar}{2}}-\frac{1}{-u-x_{i,r}-\tfrac{\hbar}{2}}+\frac{1}{-u-x_{i,r}-\tfrac{3\hbar}{2}} = 0. 
\end{align*}
Therefore, \eqref{equ:bbij} holds in this case. 

\subsubsection{The case $j\neq \tau i$, $c_{ij}=-1$}
Since the left hand side of \eqref{equ:bbij} is symmetric about $i$ and $j$, we can assume that $i$ is the source of the edge connecting $i$ and $j$.
By definition,
\[B_i(u)B_j(v)=\sum_{r=1}^{v_i}\sum_{s=1}^{v_j}\frac{1}{-u-x_{i,r}-\tfrac{\hbar}{2}} \frac{1}{-v-x_{j,s}-\tfrac{\hbar}{2}} y_{i,r}y_{j,s},\]
and by \eqref{equ:yjyi},
\[B_j(v)B_i(u)=\sum_{r=1}^{v_i}\sum_{s=1}^{v_j}\frac{1}{-u-x_{i,r}-\tfrac{\hbar}{2}} \frac{1}{-v-x_{j,s}-\tfrac{\hbar}{2}} \frac{x_{i,r}-\tfrac{\hbar}{2}-x_{j,s}}{x_{i,r}+\tfrac{\hbar}{2}-x_{j,s}}y_{i,r}y_{j,s}.\]
Therefore, the coefficient of $y_{i,r}y_{j,s}$ in the left hand side of \eqref{equ:bbij} is
\begin{align*}
    &(u-v+\tfrac{\hbar}{2})\frac{1}{-u-x_{i,r}-\tfrac{\hbar}{2}} \frac{1}{-v-x_{j,s}-\tfrac{\hbar}{2}}\\
    &-(u-v-\tfrac{\hbar}{2})\frac{1}{-u-x_{i,r}-\tfrac{\hbar}{2}} \frac{1}{-v-x_{j,s}-\tfrac{\hbar}{2}} \frac{x_{i,r}-\tfrac{\hbar}{2}-x_{j,s}}{x_{i,r}+\tfrac{\hbar}{2}-x_{j,s}}\\ 
    &+\frac{1}{-v-x_{j,s}-\tfrac{\hbar}{2}}-\frac{1}{-v-x_{j,s}-\tfrac{\hbar}{2}} \frac{x_{i,r}-\tfrac{\hbar}{2}-x_{j,s}}{x_{i,r}+\tfrac{\hbar}{2}-x_{j,s}}\\
    &-\frac{1}{-u-x_{i,r}-\tfrac{\hbar}{2}}+\frac{1}{-u-x_{i,r}-\tfrac{\hbar}{2}}  \frac{x_{i,r}-\tfrac{\hbar}{2}-x_{j,s}}{x_{i,r}+\tfrac{\hbar}{2}-x_{j,s}} = 0. 
\end{align*}
Hence, \eqref{equ:bbij} holds.

\subsubsection{The case $j=\tau i$}\label{sec:BBitaui}
We need to show
\begin{align}\label{equ:BBitaui}
    (u-v)\left[b_i(u), b_{\tau i}(v)\right]-&\frac{c_{i,\tau i}}{2} \hbar\left\{b_i(u), b_{\tau i}(v)\right\}-\hbar\left(\left[b_{i, 0}, b_{\tau i}(v)\right]-\left[b_i(u), b_{{\tau i}, 0}\right]\right) \\\notag&\qquad =- \hbar\left(\frac{2 u}{u+v} h_i^{\circ}(u)+\frac{2 v}{u+v} h_{i}^{\circ}(-v)\right).
\end{align}

It is direct to compute
\begin{align*}
    B_i(u)B_{\tau i}(v)
    =\,&\sum_{r=1}^{v_i}\frac{1}{-u-x_{i,r}-\tfrac{\hbar}{2}} \frac{1}{-v+x_{i,r}+\tfrac{\hbar}{2}} y_{i,r}y_{\tau i,r}\\
    &+\sum_{1\leq r\neq s\leq v_i}\frac{1}{-u-x_{i,r}-\tfrac{\hbar}{2}} \frac{1}{-v+x_{i,s}-\tfrac{\hbar}{2}} y_{i,r}y_{\tau i,s},
\end{align*}
and 
\begin{align*}
    B_{\tau i}(v)B_i(u) 
    =\,&\sum_{r=1}^{v_i}\frac{1}{-u-x_{i,r}+\tfrac{\hbar}{2}} \frac{1}{-v+x_{i,r}-\tfrac{\hbar}{2}} y_{\tau i,r}y_{i,r}\\
    &+\sum_{1\leq r\neq s\leq v_i}\frac{1}{-u-x_{i,r}-\tfrac{\hbar}{2}} \frac{1}{-v+x_{i,s}-\tfrac{\hbar}{2}} y_{\tau i,s}y_{i,r}.
\end{align*}
By definition, $B_{i,0}B_{\tau i}(v)$ and $B_{\tau i}(v)B_{i,0}$ (resp. $B_i(u)B_{\tau i,0}$ and $B_{\tau i,0}B_i(u)$) are the coefficients of $u^{-1}$ (resp. $v^{-1}$) in $B_i(u)B_{\tau i}(v)$ and $B_{\tau i}(v)B_i(u)$, respectively.

Hence, the $r\neq s$ terms contributions to the left hand side of \eqref{equ:BBitaui} is zero because of the following identity (notice that $c_{i,\tau i}=0$ or $-1$)
\begin{align*}
    &(u-v-\frac{c_{i,\tau i}}{2}\hbar)\frac{1}{-u-x_{i,r}-\tfrac{\hbar}{2}} \frac{1}{-v+x_{i,s}-\tfrac{\hbar}{2}}\\
    &-(u-v+\frac{c_{i,\tau i}}{2}\hbar)\frac{1}{-u-x_{i,r}-\tfrac{\hbar}{2}} \frac{1}{-v+x_{i,s}-\tfrac{\hbar}{2}}\Big(\frac{x_{i,r}-\tfrac{\hbar}{2}+x_{i,s}}{x_{i,r}+\tfrac{\hbar}{2}+x_{i,s}}\Big)^{-c_{i,\tau i}}\\
    &+\frac{1}{-v+x_{i,s}-\tfrac{\hbar}{2}}-\frac{1}{-v+x_{i,s}-\tfrac{\hbar}{2}}\Big(\frac{x_{i,r}-\tfrac{\hbar}{2}+x_{i,s}}{x_{i,r}+\tfrac{\hbar}{2}+x_{i,s}}\Big)^{-c_{i,\tau i}} \\
    &-\frac{1}{-u-x_{i,r}-\tfrac{\hbar}{2}}+\frac{1}{-u-x_{i,r}-\tfrac{\hbar}{2}}\Big(\frac{x_{i,r}-\tfrac{\hbar}{2}+x_{i,s}}{x_{i,r}+\tfrac{\hbar}{2}+x_{i,s}}\Big)^{-c_{i,\tau i}} =0. 
\end{align*}
Here we have used \eqref{equ:ytauiyi}.

Therefore, the left-hand side of \eqref{equ:BBitaui} is
\begin{align*}
    &(u-v-\frac{c_{i,\tau i}}{2}\hbar)\sum_{r=1}^{v_i}\frac{1}{-u-x_{i,r}-\tfrac{\hbar}{2}} \frac{1}{-v+x_{i,r}+\tfrac{\hbar}{2}} y_{i,r}y_{\tau i,r}\\
    &-(u-v+\frac{c_{i,\tau i}}{2}\hbar)\sum_{r=1}^{v_i}\frac{1}{-u-x_{i,r}+\tfrac{\hbar}{2}} \frac{1}{-v+x_{i,r}-\tfrac{\hbar}{2}} y_{\tau i,r}y_{i,r}\\
    &+\sum_{r=1}^{v_i}\frac{1}{-v+x_{i,r}+\tfrac{\hbar}{2}} y_{i,r}y_{\tau i,r}-\sum_{r=1}^{v_i}\frac{1}{-v+x_{i,r}-\tfrac{\hbar}{2}} y_{\tau i,r}y_{i,r}\\
    &-\sum_{r=1}^{v_i}\frac{1}{-u-x_{i,r}-\tfrac{\hbar}{2}} y_{i,r}y_{\tau i,r}+\sum_{r=1}^{v_i}\frac{1}{-u-x_{i,r}+\tfrac{\hbar}{2}} y_{\tau i,r}y_{i,r}\\
    =&-\frac{1}{u+v}\sum_{r=1}^{v_i}\bigg(\frac{2x_{i,r}+\hbar+\frac{c_{i,\tau i}}{2}\hbar}{u+x_{i,r}+\tfrac{\hbar}{2}}y_{i,r}y_{\tau i, r}-\frac{2x_{i,r}-\hbar-\frac{c_{i,\tau i}}{2}\hbar}{u+x_{i,r}-\tfrac{\hbar}{2}}y_{\tau i,r}y_{i, r}\bigg)\\
    &-\frac{1}{u+v}\sum_{r=1}^{v_i}\bigg(-\frac{2x_{i,r}+\hbar+\frac{c_{i,\tau i}}{2}\hbar}{-v+x_{i,r}+\tfrac{\hbar}{2}}y_{i,r}y_{\tau i, r}+\frac{2x_{i,r}-\hbar-\frac{c_{i,\tau i}}{2}\hbar}{-v+x_{i,r}-\tfrac{\hbar}{2}}y_{\tau i,r}y_{i, r}\bigg)\\
    =&-\hbar\left(\frac{2 u}{u+v} H_i^{\circ}(u)+\frac{2 v}{u+v} H_{i}^{\circ}(-v)\right).
\end{align*}
Here the last equality follows from Corollary \ref{cor:uH}. This finishes the proof of \eqref{equ:BBitaui}.

\subsection{Relation \eqref{eq:commSerre(z)}}\label{sec:BBijc=0}
In this paragraph, we check the following relation 
\[(u+v)\left[b_i(u), b_j(v)\right]=\delta_{\tau i, j}\hbar\left(h_j^\circ(v)-h_i^\circ(u)\right), \qquad c_{ij}=0.\]
There are two cases depending on whether $j$ equals to $\tau i$ or not.

\subsubsection{The case $j\neq \tau i$.} 
Since $c_{ij}=0$, $j\neq i$ and there is no edge connecting $i$ and $j$. If further $c_{\tau i,j}=0$, it is obvious to see that 
\[[B_i(u),B_j(v)]=0.\]
Now let us assume $c_{\tau i, j}=-1$. By symmetry, we can assume that the edge connecting $i$ and $\tau j$ has source $i$. Hence, there is also an edge going from $j$ to $\tau i$. Then the commutativity of $B_i(u)$ and $B_j(v)$ follows from the following identity
\begin{align*}
    &V_{\tau j}(x_{i,r}+\tfrac{\hbar}{2})d_{i,r}V_{\tau i}(x_{j,s}+\tfrac{\hbar}{2})d_{j,s}\\
    =\,&(x_{j,s}+\tfrac{\hbar}{2}+x_{i,r})(x_{j,s}+\tfrac{3\hbar}{2}+x_{i,r})V_{\tau j,s}(x_{i,r}+\tfrac{\hbar}{2})V_{\tau i,r}(x_{j,s}+\tfrac{\hbar}{2})d_{i,r}d_{j,s}\\
    =\,&V_{\tau i}(x_{j,s}+\tfrac{\hbar}{2})d_{j,s}V_{\tau j}(x_{i,r}+\tfrac{\hbar}{2})d_{i,r}.
\end{align*}

\subsubsection{The case $j=\tau i$.}
Since $c_{i,\tau i}=0$, \eqref{equ:ytauiyi} shows $[y_{i,r}, y_{\tau i,s}]=0$.
Therefore,
\begin{align*}
(u+v)[B_i(u),B_{\tau i}(v)] 
    =\,&\bigg(\sum_{r=1}^{v_i} \frac{1}{-v+x_{i,r}-\tfrac{\hbar}{2}}y_{\tau i,r}y_{i,r}-\sum_{r=1}^{v_i}\frac{1}{-v+x_{i,r}+\tfrac{\hbar}{2}}y_{i,r}y_{\tau i,r}\bigg)\\
    &-\bigg(\sum_{r=1}^{v_i} \frac{1}{u+x_{i,r}-\tfrac{\hbar}{2}}y_{\tau i,r}y_{i,r}-\sum_{r=1}^{v_i}\frac{1}{u+x_{i,r}+\tfrac{\hbar}{2}}y_{i,r}y_{\tau i,r}\bigg)\\
    =\,&\hbar\big(H_i^\circ(-v)-H^\circ_i(u)\big)
    =\hbar \big(H^\circ_{\tau i}(v)-H^\circ_i(u)\big).
\end{align*}
Here the last equality follows from Corollary \ref{cor:cequal0H}. This finishes the proof of relation \eqref{eq:commSerre(z)}.

\section{Relations (II)}\label{sec:iserre}

\subsection{Generating function formulation of \cref{eq:iSerre}}
\label{sec:gftoiSerre}

In the next subsection, we verify that \cref{eq:iSerre} is preserved under \cref{thm:main}. Our goal in this subsection is to reduce the proof to a simpler case.  Following the approach of \cite{Levendorskii,LZ24}, we show that it is enough to check 
\cref{eq:iSerre} in the special case \( k_1 = k_2 = r = 0 \), assuming that \cref{eq:hcomm}--\cref{eq:bcomm} are already preserved.

The Bernoulli polynomials $\{\Ber_n(x)\}_{n\geq 0}$ are defined to be the coefficients of the following exponential generating function
$$\frac{te^{tx}}{e^t-1}
=\frac{1+tx+\frac{(tx)^2}{2!}+\cdots}{
1+\frac{t}{2!}+\frac{t^2}{3!}+\cdots}
=\sum_{n\geq 0} \Ber_n(x)\frac{t^n}{n!}\in 
\mathbb{Q}[x][[t]].$$
By replacing $x$ by $x+1$, it is easy to see the following property 
$$\Ber_n(x+1)-\Ber_n(x)=nx^{n-1}. $$
Let us define 
$$\tilde{H}_{i,n}
=(-1)^{n+1}\frac{\hbar^{n}}{n+1}\sum_{r=1}^{v_i} \Ber_{n+1}(
\tfrac{1}{\hbar}x_{i,r}+\tfrac{1}{2}) \in 
\Diff_\hbar(T_V^\tau).$$
Recall that $B_{i,s}$ is the coefficient of $z^{-s-1}$ of $B_i(z)$ from \cref{equ:Biz}:
\[B_{i,s}=-\sum_{r=1}^{v_i}(-x_{i,r}-\tfrac{\hbar}{2})^s \frac{\prod_{h\in Q_1\atop s(h)=i}V_{t(h)}(x_{i,r}+\tfrac{\hbar}{2})}{\prod_{h\in Q_1^\tau\atop s(h)=i}(2x_{i,r}+\tfrac{\hbar}{2})}\frac{W_{\tau i}(-x_{i,r}-\tfrac{\hbar}{2})}{V_{i,r}(x_{i,r})}d_{i,r}.\]

\begin{lemma}We have 
\begin{equation}
\label{tildeH}
[\tilde{H}_{i,n},B_{i,s}] = B_{i,n+s},\qquad 
[\tilde{H}_{i,n},B_{\tau i,s}] =-(-1)^nB_{\tau i,n+s}.
\end{equation}
\end{lemma}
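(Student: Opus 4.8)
The plan is to work directly with the difference-operator formulas, using the key observation that $\tilde H_{i,n}$ lies in the Gelfand--Tsetlin subalgebra $\mathbb{C}[\mathfrak{t}\times\mathbb{A}^1]$ (it is a polynomial in the $x_{i,r}$ and $\hbar$), so conjugating a difference operator $d_{i,r}$ by it simply shifts its argument. Concretely, write $B_{i,s}=-\sum_{r=1}^{v_i}(-x_{i,r}-\tfrac{\hbar}{2})^{s}\,c_{i,r}(x)\,d_{i,r}$ with $c_{i,r}(x)$ the rational coefficient appearing in \eqref{equ:Biz}; since $d_{i,r}$ translates $x_{i,r}\mapsto x_{i,r}+\hbar$, we have $[\tilde H_{i,n},\,g(x)d_{i,r}]=\bigl(\tilde H_{i,n}-\tilde H_{i,n}|_{x_{i,r}\mapsto x_{i,r}+\hbar}\bigr)\,g(x)\,d_{i,r}$, and the only $x_{i,r}$-dependence of $\tilde H_{i,n}$ is the single summand $(-1)^{n+1}\tfrac{\hbar^{n}}{n+1}\Ber_{n+1}(\tfrac1\hbar x_{i,r}+\tfrac12)$.

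First I would compute the scalar $\tilde H_{i,n}-\tilde H_{i,n}|_{x_{i,r}\mapsto x_{i,r}+\hbar}$. Using the Bernoulli identity $\Ber_{n+1}(y+1)-\Ber_{n+1}(y)=(n+1)y^{n}$ recorded just above the Lemma, with $y=\tfrac1\hbar x_{i,r}+\tfrac12$ (so that $y+1=\tfrac1\hbar(x_{i,r}+\hbar)+\tfrac12$), one gets
\[
\tilde H_{i,n}\big|_{x_{i,r}\mapsto x_{i,r}+\hbar}-\tilde H_{i,n}
=(-1)^{n+1}\frac{\hbar^{n}}{n+1}\cdot(n+1)\Bigl(\tfrac1\hbar x_{i,r}+\tfrac12\Bigr)^{n}
=(-1)^{n+1}(x_{i,r}+\tfrac{\hbar}{2})^{n}
=(-x_{i,r}-\tfrac{\hbar}{2})^{n}.
\]
Hence $[\tilde H_{i,n},\,(-x_{i,r}-\tfrac{\hbar}{2})^{s}c_{i,r}(x)d_{i,r}]=(-x_{i,r}-\tfrac{\hbar}{2})^{n}(-x_{i,r}-\tfrac{\hbar}{2})^{s}c_{i,r}(x)d_{i,r}=(-x_{i,r}-\tfrac{\hbar}{2})^{n+s}c_{i,r}(x)d_{i,r}$. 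Summing over $r$ and restoring the overall sign gives $[\tilde H_{i,n},B_{i,s}]=B_{i,n+s}$, the first identity.

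For the second identity I would run the same computation for $B_{\tau i,s}$, whose formula is governed by $d_{\tau i,r}=d_{i,r}^{-1}$, i.e. $d_{\tau i,r}$ translates $x_{i,r}\mapsto x_{i,r}-\hbar$; equivalently, recalling $x_{\tau i,r}=-x_{i,r}$, one should track the summand of $\tilde H_{\tau i,n}$, but here it is cleaner to use $\tilde H_{i,n}$ directly and note that the relevant building block in $B_{\tau i,s}$ is $(-x_{\tau i,r}-\tfrac\hbar2)^s=(x_{i,r}-\tfrac\hbar2)^s$ together with $d_{i,r}^{-1}$ acting by $x_{i,r}\mapsto x_{i,r}-\hbar$. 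Then $\tilde H_{i,n}|_{x_{i,r}\mapsto x_{i,r}-\hbar}-\tilde H_{i,n}=-(-1)^{n+1}(x_{i,r}-\tfrac\hbar2)^{n}=(-1)^{n}(x_{i,r}-\tfrac\hbar2)^{n}$ by the same Bernoulli identity with $y=\tfrac1\hbar x_{i,r}+\tfrac12-1$, so $[\tilde H_{i,n},B_{\tau i,s}]=-(-1)^{n}\sum_r (x_{i,r}-\tfrac\hbar2)^{n+s}(\cdots)d_{i,r}^{-1}=-(-1)^{n}B_{\tau i,n+s}$.

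The routine-but-careful step, and the only place an error could creep in, is bookkeeping the signs and the $x\leftrightarrow-x$, $d\leftrightarrow d^{-1}$ identifications between the $i\in Q_0^+$ and $i\in Q_0^-$ conventions of \cref{sec:quiverinv}; everything else is the one-line Bernoulli shift identity. I do not expect any genuine obstacle — this Lemma is purely the statement that $\tilde H_{i,n}$ is a "creation-type" generator for the $B$'s, and it will be used in the next subsection to propagate the $k_1=k_2=r=0$ case of \eqref{eq:iSerre} to all indices via repeated bracketing with $\tilde H_{i,n}$.
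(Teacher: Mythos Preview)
Your approach is exactly the paper's: compute $[\tilde H_{i,n},d_{i,r}^{\pm1}]$ directly from the Bernoulli shift identity $\Ber_{n+1}(y+1)-\Ber_{n+1}(y)=(n+1)y^n$ and then read off the effect on $B_{i,s}$ and $B_{\tau i,s}$. One cosmetic slip in your display: $(-1)^{n+1}(x_{i,r}+\tfrac{\hbar}{2})^{n}$ equals $-(-x_{i,r}-\tfrac{\hbar}{2})^{n}$, not $(-x_{i,r}-\tfrac{\hbar}{2})^{n}$; but since the commutator factor you actually want is $\tilde H_{i,n}-\tilde H_{i,n}|_{x_{i,r}\mapsto x_{i,r}+\hbar}$ (the negative of what you displayed), the two sign slips cancel and your conclusions are correct.
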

\begin{proof}We can compute
\begin{align*}
[\tilde{H}_{i,n},d_{i,r}]
& 
=(-1)^{n+1}\frac{\hbar^{n}}{n+1}
\left(\Ber_{n+1}(\tfrac{1}{\hbar}x_{i,r}+\tfrac{1}{2})
-\Ber_{n+1}(\tfrac{1}{\hbar}x_{i,r}+\tfrac{3}{2})\right)
d_{i,r}\\
& = 
(-1)^{n}\hbar^{n}
(\tfrac{1}{\hbar}x_{i,r}+\tfrac{1}{2})^{n} d_{i,r}
=(-x_{i,r}-\tfrac{\hbar}{2})^n d_{i,r}.
\end{align*}
Hence, 
$[\tilde{H}_{i,n},B_{i,s}] = B_{i,n+s}$. 
Similarly,
\begin{align*}
[\tilde{H}_{i,n},d_{i,r}^{-1}]
& 
=(-1)^{n+1}\frac{\hbar^{n}}{n+1}
\left(\Ber_{n+1}(\tfrac{1}{\hbar}x_{i,r}+\tfrac{1}{2})
-\Ber_{n+1}(\tfrac{1}{\hbar}x_{i,r}-\tfrac{1}{2})\right)
d_{i,r}\\
& = 
(-1)^{n+1}\hbar^{n}
(\tfrac{1}{\hbar}x_{i,r}-\tfrac{1}{2})^{n} d_{i,r}^{-1}
=(-1)^{n+1}(x_{i,r}-\tfrac{\hbar}{2})^n d_{i,r}^{-1}.
\end{align*}
Hence, 
$[\tilde{H}_{i,n},B_{i,s}] = -(-1)^{n}B_{i,r+s}$. 
\end{proof}

\begin{remark}
    Similar elements also appeared in \cite{NakdAHA}. The elements \(\tilde{H}_{i,r}\) are analogues of \(\tilde{h}_{i,r}\) from \cite[Section~3.4]{LZ24}. In contrast to \cite[Section~3.4]{LZ24}, we do not assume that the Cartan matrix \( C = (c_{ij}) \) is invertible.  Therefore, we cannot directly define \(\tilde{h}_{i,r}\) for the shifted twisted Yangians.
\end{remark}

Now we fix $i$ and suppose $c_{i,\tau i}=-1$ (which also implies $i\neq \tau i$). We note that, in \cref{thm:main}, we have 
\[b_{i,s}\mapsto \hbar^{-1}B_{i,s},\qquad h_{i,r}\mapsto \hbar^{-1}H_{i,r}.\]
In what follows, we show that the preservation of the relation \cref{eq:iSerre} can be deduced from its base case with \( k_1 = k_2 = r = 0 \), i.e. 
\begin{equation}
    \label{finiteSerre}
    \hbar^{-1}[B_{i,0},[B_{i,0},B_{\tau i,0}]]=\frac{4}{3}\sum_{p=0}^{\infty}3^{-p}[B_{i,p},H_{\tau i,-p}]
\end{equation}
We remark that the right hand side of \cref{finiteSerre} is a finite sum by the same assumption as in \cref{eq:vanishing}.

\begin{prop}
\label{prop:finite=>affine}
Suppose that \eqref{eq:hcomm}--\eqref{eq:bcomm} are preserved under \cref{gklo}, then \eqref{eq:iSerre} for arbitrary $k_1,k_2,r\geq 0$ follow from its special case \cref{finiteSerre} (where $k_1=k_2=r=0$). 
\end{prop}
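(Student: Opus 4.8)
The plan is to bootstrap the general relation \eqref{eq:iSerre} from its base case \eqref{finiteSerre} using the operators $\tilde H_{i,n}$ as ``raising operators'' that increment the spectral parameter indices. The key structural fact is \eqref{tildeH}: conjugation by $\tilde H_{i,n}$ sends $B_{i,s}$ to $B_{i,n+s}$ and $B_{\tau i,s}$ to $-(-1)^n B_{\tau i,n+s}$. The strategy is therefore to apply the derivation $[\tilde H_{i,n},-]$ repeatedly to \eqref{finiteSerre} and check that the images of both sides are precisely the images (under $\psi$) of the two sides of \eqref{eq:iSerre} for higher indices.

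First I would compute the effect of $[\tilde H_{i,n},-]$ on the left-hand side of \eqref{finiteSerre}. Since $[\tilde H_{i,n},-]$ is a derivation, $[\tilde H_{i,n},[B_{i,0},[B_{i,0},B_{\tau i,0}]]]$ expands by the Leibniz rule into three terms; using \eqref{tildeH} these become $[B_{i,n},[B_{i,0},B_{\tau i,0}]] + [B_{i,0},[B_{i,n},B_{\tau i,0}]] - (-1)^n[B_{i,0},[B_{i,0},B_{\tau i,n}]]$. Iterating with indices $n_1,n_2,\dots$ and symmetrizing, one recovers $\operatorname{Sym}_{k_1,k_2}[B_{i,k_1},[B_{i,k_2},B_{\tau i,r}]]$ up to keeping track of sign bookkeeping coming from the $-(-1)^n$ on the $\tau i$ slot; the $\operatorname{Sym}_{k_1,k_2}$ on the left of \eqref{eq:iSerre} is exactly what makes the cross-terms organize correctly. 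Here I would want to first establish the $r=0$, general $k_1,k_2$ case by applying $[\tilde H_{i,k_1},-]$ and $[\tilde H_{i,k_2},-]$ to \eqref{finiteSerre}, and only afterwards raise $r$ by applying $[\tilde H_{i,r},-]$ once more — the point being that $[B_{i,0},B_{\tau i,0}]=H_{\tau i,0}$ (by \eqref{eq:commSerre}, which is among the already-verified relations since $c_{i,\tau i}=-1$ forces the commutator Serre relation \eqref{eq:commSerre} only when $c=0$, so actually one uses \eqref{eq:bcomm} to express the innermost bracket), so raising the innermost index is what produces the $h_{\tau i,k_1+r-p}$ on the right.

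For the right-hand side, I would apply the same sequence of derivations to $\frac{4}{3}\sum_{p\geq 0}3^{-p}[B_{i,p},H_{\tau i,-p}]$. Because \eqref{eq:hcomm}--\eqref{eq:bcomm} are assumed preserved, the commutators $[\tilde H_{i,n},H_{\tau i,m}]$ are controlled: one gets $[\tilde H_{i,n},H_{\tau i,m}]$ shifting the $H$-index and producing correction terms, and $[\tilde H_{i,n},B_{i,p}]=B_{i,n+p}$ cleanly. The sum $\sum_p 3^{-p}$ with its geometric weight is precisely engineered so that, after the telescoping produced by the index shifts and the identity $\operatorname{Ber}_{n+1}(x+1)-\operatorname{Ber}_{n+1}(x)=(n+1)x^n$ implicitly behind \eqref{tildeH}, the right-hand side reassembles into $\frac{4}{3}\operatorname{Sym}_{k_1,k_2}(-1)^{k_1}\sum_{p\geq 0}3^{-p}[B_{i,k_2+p},H_{\tau i,k_1+r-p}]$, which is the $\psi$-image of the right side of \eqref{eq:iSerre}. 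The $(-1)^{k_1}$ factor is exactly tracked by the sign $-(-1)^n$ attached to raising on the $\tau i$-slot in \eqref{tildeH}, together with the second relation in \eqref{eq:hcomm} $h_{i,s}=(-1)^{s+1}h_{\tau i,s}$.

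The main obstacle I anticipate is the combinatorial bookkeeping of which slot each $\tilde H_{i,n}$ hits and the accompanying signs: the Leibniz expansion generates many terms, and one must verify that after symmetrization in $k_1,k_2$ the ``wrong'' terms (those where a raising operator lands on a $\tau i$ factor in a way not matching the target) cancel or recombine, using the lower relations \eqref{eq:bcomm} and \eqref{eq:hbcomm}. In particular one needs to know that $[\tilde H_{i,n},-]$ applied to a bracket of the form $[B_{i,a},H_{\tau i,b}]$ does not generate genuinely new structure beyond index shifts — this should follow from \eqref{eq:hbcomm} (rewritten via generating functions as \eqref{eq:hbcomm(z)}), but confirming it requires expanding $[\tilde H_{i,n},H_{\tau i,b}]$ carefully, since $\tilde H_{i,n}$ is not itself one of the $h$-generators. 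I would isolate this as a preliminary lemma: for the operators in the image of $\psi$, $[\tilde H_{i,n}, -]$ acts on $B_{i,\bullet},B_{\tau i,\bullet},H_{\tau i,\bullet}$ by the shift rules dictated by \eqref{tildeH} and \eqref{eq:hcomm}, with no extra terms. Granting that lemma, the proposition follows by a finite induction on $k_1+k_2+r$, the base case being \eqref{finiteSerre}.
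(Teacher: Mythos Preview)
Your general instinct—use the shift operators $\tilde H_{i,n}$ from \eqref{tildeH} to raise indices—is exactly the paper's approach, and your worry about $[\tilde H_{i,n},H_{\tau i,b}]$ is misplaced: both $\tilde H_{i,n}$ and $H_{\tau i,b}$ are polynomials in the commuting scalars $x_{i,r},w_{j,k},\hbar$ (no difference operators $d_{i,r}$ appear), so this bracket vanishes on the nose. No appeal to \eqref{eq:hbcomm} is needed.

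There is, however, a real gap in your plan. Applying $[\tilde H_{i,n},-]$ to the base case does \emph{not} produce \eqref{eq:iSerre} for a single higher index; by Leibniz it yields
\[
[\tilde H_{i,n},(0,0\mid 0)] \;=\; 2(n,0\mid 0)\;-\;(-1)^{n}(0,0\mid n),
\]
a single linear combination of \emph{two} unknown Serre expressions. Repeated bracketing with various $\tilde H_{i,m}$ only produces further such symmetric combinations, never enough equations to isolate $(k_1,k_2\mid r)$ individually. The bookkeeping difficulty you anticipate is not a matter of signs cancelling after symmetrization; the system is genuinely underdetermined.

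The missing ingredient, and the heart of the paper's proof, is a separate recursion derived directly from \eqref{eq:bcomm}:
\[
(k_1+1,k_2\mid r)+(k_1,k_2+1\mid r)-2(k_1,k_2\mid r+1)
= -4\,\operatorname{Sym}_{k_1,k_2}(-1)^{k_1}[B_{i,k_2},H_{\tau i,k_1+r+1}].
\]
This is obtained by expanding $\operatorname{Sym}_{k_1,k_2}[[B_{i,k_1+1},B_{i,k_2}],B_{\tau i,r}]$ via Jacobi and then using \eqref{eq:bcomm} twice (once for $[B_{i,k_1+1},B_{i,k_2}]$ and once for $[B_{i,k_2+1},B_{\tau i,r}]-[B_{i,k_2},B_{\tau i,r+1}]$). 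Specialized to $k_1=k_2=0$ it gives a second equation relating $(1,0\mid r)$ and $(0,0\mid r+1)$, which together with the $\tilde H$-bracketing equation lets you solve for $(0,0\mid r+1)$ and run an induction on $r$. Once all $(0,0\mid r)$ are known, the $\tilde H$-bracketing equation then determines $(k_1,0\mid r)$, and two further $\tilde H$-bracketings give $(k_1,k_2\mid 0)$ and finally $(k_1,k_2\mid r)$. Your outline should incorporate this recursion explicitly; without it the induction does not close.
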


\begin{proof}
The argument is essentially the same as that of \cite[Proposition 3.12]{LZ24}, but we repeat here for completeness.

We set 
\[(k_1,k_2\mid r):=\hbar^{-1}\operatorname{Sym}_{k_1,k_2}[B_{i,k_1},[B_{i,k_2},B_{\tau i,r}]].\]
Then we have 
\begin{align}
\label{induc}
&   (k_1+1,k_2\mid r)+(k_1,k_2+1\mid r)-2(k_1,k_2\mid r+1)\\\notag
& \qquad \qquad \qquad =-4\operatorname{Sym}_{k_1,k_2}(-1)^{k_1}[B_{i,k_2},H_{\tau i,k_1+r+1}].
\end{align}
In fact , 
\begin{align*}
    & (k_1+1,k_2\mid r)+(k_1,k_2+1\mid r)-2(k_1,k_2\mid r+1)\\
    =\hbar^{-1}\,&\operatorname{Sym}_{k_1,k_2}
\Big([B_{i,k_1+1},[B_{i,k_2},B_{\tau i,r}]]
    + [B_{i,k_1},[B_{i,k_2+1},B_{\tau i,r}]]
    -2 [B_{i,k_1},[B_{i,k_2},B_{\tau i, r+1}]]
    \Big)\\
    =\hbar^{-1}\,&\operatorname{Sym}_{k_1,k_2}
\Big(
[[B_{i,k_1+1},B_{i,k_2}],B_{\tau i,r}]
+ 2[B_{i,k_1},[B_{i,k_2+1},B_{\tau i,r}]]
-2 [B_{i,k_1},[B_{i,k_2},B_{\tau i, r+1}]]\Big)\\
=\,&
[\{B_{i,k_1},B_{i,k_2}\},B_{\tau i,r}]
+ 
\operatorname{Sym}_{k_1,k_2}\Big(2
\big[B_{i,k_1},-\tfrac{1}{2}
\{B_{i,k_2},B_{\tau i,r}\}-2(-1)^{k_2}
[B_{i,k_1},H_{\tau i,k_2+r+1}]\big]\Big)\\
=\,&-4\operatorname{Sym}_{k_1,k_2}(-1)^{k_2}[B_{i,k_1},H_{\tau i,k_2+r+1}]. 
\end{align*}    
where the second to the last equality follows from \cref{eq:bcomm}.

We first show \cref{eq:iSerre} for the case $(0,0\ |\ r)$ by an induction on $r$. The base case when $r=0$ is just \cref{finiteSerre}. Now we suppose \cref{eq:iSerre} holds for $(0,0\mid r)$, that is
    \begin{equation}\label{00r}
   \hbar^{-1} \left[B_{i,0},\left[B_{i, 0}, B_{\tau i, r}\right]\right]=\frac{4}{3}  \sum_{p=0}^{\infty} 3^{-p}\left[B_{i, p}, H_{\tau i, r-p}\right].
\end{equation}
Bracketing \cref{00r} with $\tilde{H}_{i,k_1}$, we find that
\begin{align*}
    &[\tilde H_{i,k_1},\left[B_{i,0},\left[B_{i, 0}, B_{\tau i, r}\right]\right]]=[[\tilde H_{i,k_1},B_{i,0}],\left[B_{i, 0}, B_{\tau i, r}\right]]+[B_{i,0},[\tilde H_{i,k_1},\left[B_{i, 0}, B_{\tau i, r}\right]]] \\
    &\qquad 
    =\left[B_{i,k_1},\left[B_{i, 0}, B_{\tau i, r}\right]\right]+\left[B_{i,0},\left[B_{i, k_1}, B_{\tau i, r}\right]\right]-(-1)^{k_1}\left[B_{i,0},\left[B_{i, 0}, B_{\tau i, r+k_1}\right]\right],
\end{align*}
where the second equality follows from \cref{tildeH}. Moreover,
\begin{align*}&
     [\tilde H_{i,k_1},\left[B_{i, p}, H_{\tau i, r-p}\right]]=[[\tilde H_{i,k_1},B_{i,p}],H_{\tau i, r-p}]=[B_{i,k_1+p},H_{\tau i, r-p}],
\end{align*}
where the first equality follows from \cref{eq:hcomm} while the second one follows from \cref{tildeH}.
Thus  we conclude that
\begin{equation}
\label{sum1}
    2(k_1,0\ |\ r)-(-1)^{k_1}(0,0\ |\ r+k_1)=\frac{8}{3}\sum_{p=0}^{\infty} 3^{-p}\left[B_{i, k_1+p}, H_{\tau i, r-p}\right].
\end{equation}
On the other hand, by setting $k_1=k_2=0$ in \cref{induc}, we obtain 
\begin{equation}
    \label{sum2}
    2(1,0\mid r)-2(0,0\mid r+1)=-8 [B_{i,0},H_{\tau i,r+1}].
\end{equation}
Solving $(0,0\mid r+1)$ from \cref{sum1,sum2}, we conclude that \cref{eq:iSerre} also holds for the case $(0,0\mid r+1)$. Then by \cref{eq:iSerre} for the case $(0,0\mid r+k_1)$ and \cref{sum1}, we obtain \cref{eq:iSerre} for the case $(k_1,0\mid r)$.

Bracketing $(k_1,0\mid 0)$ with $\tilde H_{i,k_2}$, we get
\begin{align*}
    &(k_1+k_2,0\mid 0)+(k_1,k_2\mid 0)-(-1)^{k_2}(k_1,0\mid k_2)\\
    =\,&\frac{4}{3}(-1)^{k_1}\sum_{p=0}^{\infty}3^{-p}[B_{i,p+k_2},H_{\tau i,k_1-p}]+\frac{4}{3}\sum_{p=0}^{\infty}3^{-p}[B_{i,p+k_1+k_2},H_{\tau i,-p}].
\end{align*}
Since we already obtain \cref{eq:iSerre} for $(k_1+k_2,0\mid 0)$ and $(k_1,0\mid k_2)$, we deduce \cref{eq:iSerre} for $(k_1,k_2\mid 0)$. Finally, bracketing \cref{eq:iSerre} for $(k_1,k_2\mid 0)$ with $\tilde{H}_{i,k_2}$, we obtain
\begin{align*}
    &(k_1+r,k_2\mid 0)+(k_1,k_2+r\mid 0)-(-1)^{r}(k_1,k_2\mid r)\\
    =\,&\frac{4}{3}(-1)^{k_1}\sum_{p=0}^{\infty}3^{-p}[B_{i,p+k_2+r},H_{\tau i,k_1-p}]+\frac{4}{3}(-1)^{k_2}\sum_{p=0}^{\infty}3^{-p}[B_{i,p+k_1+r},H_{\tau i,k_2-p}].
\end{align*}
By substituting \cref{eq:iSerre} for $(k_1+r,k_2\mid 0)$ and $(k_1,k_2+r\mid 0)$, we obtain $(k_1,k_2\mid r)$.
\end{proof}

\subsection{Serre Relations}
\label{iSerre(z)}
The Serre Relation \eqref{eq:usualSerre(z)} can be checked exactly the same as in \cite[Appendix B.6]{BFN19}, so we omit it.

Now let us check the $\imath$Serre Relation \eqref{eq:iSerre}. 
By Proposition \ref{prop:finite=>affine}, it suffices to check Relation \eqref{eq:iSerre} at $k_1=k_2=0$, which admits a generating function 
\begin{equation}\label{equ:iSerreitaui}
	\hbar^2[b_{i,0},[b_{i,0},b_{\tau(i)}(v)]]=\left(4v
	\hbar [b_i(3v), h_{\tau(i)}(v)]\right)^\circ.
\end{equation}
Recall that $\hbar b_{i,0}$ is sent to $B_{i,0}=-\sum_{r=1}^{v_i}y_{i,r}d_{i,r}$.
By direct computation, we get
\begin{align*}
B_{i,0}B_{i,0}B_{\tau i}(v)
	=\,& \sum_{r=1}^{v_i}\frac{1}{-v+x_{i,r}+\tfrac{3\hbar}{2}}y_{i,r}y_{i,r}y_{\tau i,r} 
    +\sum_{1\leq r\neq t\leq v_i}\frac{1}{-v+x_{i,t}-\tfrac{\hbar}{2}}y_{i,r}y_{i,r}y_{\tau i,t}
    \\
	&+\sum_{1\leq r\neq s\leq v_i}\frac{1}{-v+x_{i,s}+\tfrac{\hbar}{2}}y_{i,r}y_{i,s}y_{\tau i,s}
    +\sum_{1\leq r\neq s\leq v_i}\frac{1}{-v+x_{i,r}+\tfrac{\hbar}{2}}y_{i,r}y_{i,s}y_{\tau i,r}
    \\
	&+\sum_{r,s,t\text{ distinct}}
    \frac{1}{-v+x_{i,t}-\tfrac{\hbar}{2}}y_{i,r}y_{i,s}y_{\tau i,t},
\end{align*}
\begin{align*}
B_{i,0}B_{\tau i}(v)B_{i,0}
	=\,& \sum_{r=1}^{v_i}\frac{1}{-v+x_{i,r}+\tfrac{\hbar}{2}}y_{i,r}y_{\tau i,r}y_{i,r}
    +\sum_{1\leq r\neq t\leq v_i}\frac{1}{-v+x_{i,t}-\tfrac{\hbar}{2}}y_{i,r}y_{\tau i,t}y_{i,r}
    \\
    &+\sum_{1\leq r\neq s\leq v_i}\frac{1}{-v+x_{i,s}-\tfrac{\hbar}{2}}y_{i,r}y_{\tau i,s}y_{i,s}
    +\sum_{1\leq r\neq s\leq v_i}\frac{1}{-v+x_{i,r}+\tfrac{\hbar}{2}}y_{i,r}y_{\tau i,r}y_{i,s}
    \\
	&+\sum_{r,s,t\text{ distinct}}\frac{1}{-v+x_{i,t}-\tfrac{\hbar}{2}}y_{i,r}y_{\tau i,t}y_{i,s},
    \text{\qquad and}
\end{align*}
\begin{align*}
B_{\tau i}(v)B_{i,0}B_{i,0}
	=&\sum_{r=1}^{v_i}\frac{1}{-v+x_{i,r}-\tfrac{\hbar}{2}}y_{\tau i,r}y_{i,r}y_{i,r}
    +
    \sum_{1\leq r\neq t\leq v_i}\frac{1}{-v+x_{i,t}-\tfrac{\hbar}{2}}y_{\tau i,t}y_{i,r}y_{i,r}
    \\
	&+\sum_{1\leq r\neq s\leq v_i}\frac{1}{-v+x_{i,s}-\tfrac{\hbar}{2}}y_{\tau i,s}y_{i,r}y_{i,s}
    +
    \sum_{1\leq r\neq s\leq v_i}\frac{1}{-v+x_{i,r}-\tfrac{\hbar}{2}}y_{\tau i,r}y_{i,r}y_{i,s}
    \\
	&+\sum_{r,s,t\text{ distinct}}\frac{1}{-v+x_{i,t}-\tfrac{\hbar}{2}}y_{\tau i,t}y_{i,r}y_{i,s}.
\end{align*}

Notice that 
\[[b_{i,0},[b_{i,0},b_{\tau(i)}(v)]]=b_{i,0}b_{i,0}b_{\tau(i)}(v)-2b_{i,0}b_{\tau(i)}(v)b_{i,0}+b_{\tau(i)}(v)b_{i,0}b_{i,0}.\]
Therefore, the contribution of the terms when $r,s,t$ are distinct to \eqref{equ:iSerreitaui} is 
\begin{align*}
	&\sum_{r,s,t\text{ distinct}}\frac{1}{-v+x_{i,t}-\tfrac{\hbar}{2}}(y_{i,r}y_{i,s}y_{\tau i,t}-2y_{i,r}y_{\tau i,t}y_{i,s}+y_{\tau i,t}y_{i,r}y_{i,s})\\
	=\,&\sum_{r,s,t\text{ distinct}}
    \frac{1}{-v+x_{i,t}-\tfrac{\hbar}{2}}\frac{\hbar(\hbar+x_{i,r}-x_{i,s})}{(x_{i,r}+\tfrac{\hbar}{2}+x_{i,t})(x_{i,s}+\tfrac{\hbar}{2}+x_{i,t})}y_{i,r}y_{i,s}y_{\tau i,t}\\
	=\,&\frac{1}{2}\sum_{r,s,t\text{ distinct}}\frac{1}{-v+x_{i,t}-\tfrac{\hbar}{2}}\frac{\hbar}{(x_{i,r}+\tfrac{\hbar}{2}+x_{i,t})(x_{i,s}+\tfrac{\hbar}{2}+x_{i,t})}\\
	&\cdot \bigg((\hbar+x_{i,r}-x_{i,s})y_{i,r}y_{i,s}y_{\tau i,t}+(\hbar+x_{i,s}-x_{i,r})y_{i,s}y_{i,r}y_{\tau i,t}\bigg) = 0.
\end{align*}
Here the second equality follows from \eqref{equ:ytauiyi} and the last one follows from \eqref{equ:yiyi}.

Similarly, by \eqref{equ:ytauiyi}, the contribution of the terms when $r=s\neq t$ is $0$ because of the following identity
\begin{align*}
	&y_{i,r}y_{i,r}y_{\tau i,t}-2y_{i,r}y_{\tau i,t}y_{i,r}+y_{\tau i,t}y_{i,r}y_{i,r}\\
	=\,&\bigg(1-2\frac{x_{i,r}+\tfrac{\hbar}{2}+x_{i,t}}{x_{i,r}+\tfrac{3\hbar}{2}+x_{i,t}}+\frac{x_{i,r}-\tfrac{\hbar}{2}+x_{i,t}}{x_{i,r}+\tfrac{\hbar}{2}+x_{i,t}}\frac{x_{i,r}+\tfrac{\hbar}{2}+x_{i,t}}{x_{i,r}+\tfrac{3\hbar}{2}+x_{i,t}}\bigg)y_{i,r}y_{i,r}y_{\tau i,t} 
    =0.
\end{align*}

Therefore, by Equations \eqref{equ:yiyi} and \eqref{equ:ytauiyi}, we get
\begin{align}\label{equ:bbbiitaui}
	&[B_{i,0},[B_{i,0},B_{\tau(i)}(v)]]\\
	=\,&\sum_{r=1}^{v_i}\bigg(-\frac{1}{v-x_{i,r}-\tfrac{3\hbar}{2}}y_{i,r}y_{i,r}y_{\tau i,r}+\frac{2}{v-x_{i,r}-\tfrac{\hbar}{2}}y_{i,r}y_{\tau i,r}y_{i,r}-\frac{1}{v-x_{i,r}+\tfrac{\hbar}{2}}y_{\tau i,r}y_{i,r}y_{i,r}\bigg)\notag\\
		&+\sum_{1\leq r\neq s\leq v_i}\frac{1}{v-x_{i,s}+\tfrac{\hbar}{2}}\frac{\hbar(\hbar-4x_{i,s})}{(x_{i,r}-x_{i,s})(x_{i,r}+x_{i,s}+\tfrac{\hbar}{2})}y_{i,r}y_{\tau i,s}y_{i,s}\notag\\
	&+\sum_{1\leq r\neq s\leq v_i}\frac{1}{v-x_{i,s}-\tfrac{\hbar}{2}}\frac{\hbar(\hbar+4x_{i,s})}{(x_{i,r}-x_{i,s}-\hbar)(\tfrac{3\hbar}{2}+x_{i,r}+x_{i,s})}y_{i,r}y_{i,s}y_{\tau i,s}.\notag
\end{align}

Since
$$H_i(u) := \frac{(-1)^{v_i-1+\delta_{i\to \tau i}}}{2u}\frac{W_i(-u)W_{\tau i}(u)}{V_i(-u+\frac{\hbar}{2})V_i(-u-\tfrac{\hbar}{2})}\prod_{h\in Q_1\atop s(h)=i}V_{t(h)}(-u)\prod_{h\in Q_1\atop s(h)=\tau i}V_{t(h)}(u),$$
we get
\begin{align*}
4v[B_i(3v),H_i(-v)]
	=\,&2\hbar(-1)^{v_i}\frac{W_i(v)W_{\tau i}(-v)}{V_i(v+\frac{\hbar}{2})V_i(v-\tfrac{\hbar}{2})}\prod_{h\notin Q_1^\tau \atop s(h)=i}V_{t(h)}(v)\prod_{h\notin Q_1^\tau\atop s(h)=\tau i}V_{t(h)}(-v)\\
	&\cdot\sum_{r=1}^{v_i}\frac{1}{v-\tfrac{3\hbar}{2}-x_{i,r}} \prod_{h\in Q_1^\tau \atop s(h)=i}V_{\tau i,r}(v) \prod_{h\in Q_1^\tau \atop s(h)=\tau i}V_{i,r}(-v)y_{i,r}.
\end{align*}
Hence, the function $4v[B_i(3v),H_i(-v)]$ has simple poles at \[\bigg\{x_{i,r}+\tfrac{3\hbar}{2}, x_{i,r}-\tfrac{\hbar}{2}, x_{i,r}+\tfrac{\hbar}{2} \mid 1\leq r\leq v_i \bigg\}.\] 
Applying Lemma \ref{lem:partialfrac} to $4v[B_i(3v),H_i(-v)]$, we get  
\begin{align*}
	&(4v\hbar[B_i(3v),H_i(-v)])^\circ\\
	=\,&\sum_{r=1}^{v_i}\frac{(-1)^{v_i}}{v-\tfrac{3\hbar}{2}-x_{i,r}}\frac{W_i(x_{i,r}+\tfrac{3\hbar}{2} )W_{\tau i}(-x_{i,r}-\tfrac{3\hbar}{2} )}{V_{i,r}(x_{i,r}+2\hbar )V_{i,r}(x_{i,r}+\hbar )}\prod_{h\notin Q_1^\tau \atop s(h)=i}V_{t(h)}(x_{i,r}+\tfrac{3\hbar}{2} )\prod_{h\notin Q_1^\tau\atop s(h)=\tau i}V_{t(h)}(-x_{i,r}-\tfrac{3\hbar}{2} )\\
	&\cdot \prod_{h\in Q_1^\tau \atop s(h)=i}V_{\tau i,r}(x_{i,r}+\tfrac{3\hbar}{2} ) \prod_{h\in Q_1^\tau \atop s(h)=\tau i}V_{i,r}(-x_{i,r}-\tfrac{3\hbar}{2} )y_{i,r}\\
	&+\sum_{r=1}^{v_i}\frac{(-1)^{v_i}}{v-x_{i,r}+\frac{\hbar}{2}}\frac{W_i(x_{i,r}-\tfrac{\hbar}{2})W_{\tau i}(-x_{i,r}+\tfrac{\hbar}{2})}{V_{i,r}(x_{i,r})V_{i,r}(x_{i,r}-\hbar)}\prod_{h\notin Q_1^\tau \atop s(h)=i}V_{t(h)}(x_{i,r}-\tfrac{\hbar}{2})\prod_{h\notin Q_1^\tau\atop s(h)=\tau i}V_{t(h)}(-x_{i,r}+\tfrac{\hbar}{2})\\  %%%%separating two cases r=s or not
	&\cdot\prod_{h\in Q_1^\tau \atop s(h)=i}V_{\tau i,r}(x_{i,r}-\tfrac{\hbar}{2}) \prod_{h\in Q_1^\tau \atop s(h)=\tau i}V_{i,r}(-x_{i,r}+\tfrac{\hbar}{2})y_{i,r}\\
	&-\sum_{r=1}^{v_i}\frac{2(-1)^{v_i}}{v-x_{i,r}-\tfrac{\hbar}{2}}\frac{W_i(x_{i,r}+\tfrac{\hbar}{2})W_{\tau i}(-x_{i,r}-\tfrac{\hbar}{2})}{V_{i,r}(x_{i,r}+\hbar)V_{i,r}(x_{i,r})}\prod_{h\notin Q_1^\tau \atop s(h)=i}V_{t(h)}(x_{i,r}+\tfrac{\hbar}{2})\prod_{h\notin Q_1^\tau\atop s(h)=\tau i}V_{t(h)}(-x_{i,r}-\tfrac{\hbar}{2})\\
	&\cdot \prod_{h\in Q_1^\tau \atop s(h)=i}V_{\tau i,r}(x_{i,r}+\tfrac{\hbar}{2}) \prod_{h\in Q_1^\tau \atop s(h)=\tau i}V_{i,r}(-x_{i,r}-\tfrac{\hbar}{2})y_{i,r}\\
	&-\sum_{1\leq r\neq s\leq v_i}\frac{2\hbar(-1)^{v_i}}{v-x_{i,s}+\frac{\hbar}{2}}\frac{W_i(x_{i,s}-\tfrac{\hbar}{2})W_{\tau i}(-x_{i,s}+\tfrac{\hbar}{2})}{V_{i,s}(x_{i,s})V_{i,s}(x_{i,s}-\hbar)}\prod_{h\notin Q_1^\tau \atop s(h)=i}V_{t(h)}(x_{i,s}-\tfrac{\hbar}{2})\prod_{h\notin Q_1^\tau\atop s(h)=\tau i}V_{t(h)}(-x_{i,s}+\tfrac{\hbar}{2})\\
	&\cdot\frac{2x_{i,s}-\tfrac{\hbar}{2}}{(x_{i,s}-2\hbar-x_{i,r})(x_{i,s}-\tfrac{\hbar}{2}+x_{i,r})} \prod_{h\in Q_1^\tau \atop s(h)=i}V_{\tau i,s}(x_{i,s}-\tfrac{\hbar}{2}) \prod_{h\in Q_1^\tau \atop s(h)=\tau i}V_{i,s}(-x_{i,s}+\tfrac{\hbar}{2})y_{i,r}\\
	&+\sum_{1\leq r\neq s\leq v_i}\frac{2\hbar(-1)^{v_i}}{v-x_{i,s}-\tfrac{\hbar}{2}}\frac{W_i(x_{i,s}+\tfrac{\hbar}{2})W_{\tau i}(-x_{i,s}-\tfrac{\hbar}{2})}{V_{i,s}(x_{i,s}+\hbar)V_{i,s}(x_{i,s})}\prod_{h\notin Q_1^\tau \atop s(h)=i}V_{t(h)}(x_{i,s}+\tfrac{\hbar}{2})\prod_{h\notin Q_1^\tau\atop s(h)=\tau i}V_{t(h)}(-x_{i,s}-\tfrac{\hbar}{2})\\
	&\cdot\frac{2x_{i,s}+\tfrac{\hbar}{2}}{(x_{i,s}-\hbar-x_{i,r})(x_{i,s}+\tfrac{\hbar}{2}+x_{i,r})} \prod_{h\in Q_1^\tau \atop s(h)=i}V_{\tau i,s}(x_{i,s}+\tfrac{\hbar}{2}) \prod_{h\in Q_1^\tau \atop s(h)=\tau i}V_{i,s}(-x_{i,s}-\tfrac{\hbar}{2})y_{i,r}.
\end{align*}
which can be shown equal to $[B_{i,0},[B_{i,0},B_{\tau(i)}(v)]]$ from \eqref{equ:bbbiitaui} termwise. This finishes the proof of \eqref{equ:iSerreitaui}.

\bibliographystyle{alpha}
\bibliography{TruSTY.bib}

\end{document}